\def\sss{\scriptscriptstyle}
\def\l{{\lambda}}
\def\scrH{\mathscr{H}}
\def\GO{\mathbb{O}^{\bGamma}}
\def\GQR{$\mathrm{\Gamma}${\rm QR}}
\def\GLan{$\mathrm{\Gamma}$-Lanczos}
\def\qD{\mbox{{\rm q}$\mathbb{D}$}}
\def\qU{\mbox{{\rm q}$\mathbb{U}$}}
\def\ba{\boldsymbol{a}}
\def\bb{\boldsymbol{b}}
\def\be{\boldsymbol{e}}
\def\bg{\boldsymbol{g}}
\def\bp{\boldsymbol{p}}
\def\bq{\boldsymbol{q}}
\def\br{\boldsymbol{r}}
\def\bu{\boldsymbol{u}}
\def\bv{\boldsymbol{v}}
\def\bx{\boldsymbol{x}}
\def\by{\boldsymbol{y}}
\def\bz{\boldsymbol{z}}
\def\bGamma{\boldsymbol{\Gamma}}
\def\bPi{\boldsymbol{\Pi}}
\def\b0{\bf{0}}
\def\wtd{\widetilde}
\DeclareMathOperator{\rank}{rank}
\DeclareMathOperator{\diag}{diag}
\DeclareMathOperator{\Span}{span}
\DeclareMathOperator{\sign}{sign}
\newtheorem{theorem}{Theorem}[section]
\newtheorem{lemma}[theorem]{Lemma}
\newtheorem{proposition}[theorem]{Proposition}
\newdefinition{definition}[theorem]{Definition}
\newdefinition{notation}{Notation}
\newdefinition{remark}{Remark}
\newdefinition{example}[theorem]{Example}
\newproof{proof}{Proof}
\journal{Journal of Computational and Applied Mathematics}
\begin{document}
\raggedbottom
\begin{frontmatter}

	\title{Structure-Preserving \GQR\ and \GLan\
	Algorithms for Bethe-Salpeter Eigenvalue Problems}
	\author[Guo]{Zhen-Chen Guo\corref{cor1}} \ead{guozhch06@gmail.com}
	\author[seu]{Tiexiang Li} \ead{txli@seu.edu.cn}
	\author[seu]{Ying-Ying Zhou} \ead{zyy\_seu@126.com}
	\cortext[cor1]{Corresponding author}
	\address[Guo]{Department of Applied Mathematics, National Chiao Tung University, Hsinchu 300, Taiwan, R.O.C.} 
	\address[seu]{School of Mathematics, Southeast University, Nanjing, 211189, People's Republic of China}

\begin{abstract}
To solve the Bethe-Salpeter  eigenvalue  problem with distinct sizes,
two  efficient methods, called   \GQR \ algorithm and
\GLan\ algorithm,  are proposed in this paper. Both  algorithms  preserve
the special structure of the initial matrix $\scrH=\begin{bmatrix}A&B\\-\overline{B}&-\overline{A}\end{bmatrix}$,
resulting the computed eigenvalues and the associated eigenvectors still hold the  properties similar to  those of  $\scrH$.
Theorems are given to demonstrate the validity  of the proposed two algorithms  in theory.
Numerical results are presented to illustrate the  superiorities of our methods.
\end{abstract}

\begin{keyword}
Bethe-Salpeter  eigenvalue problem \sep  
$\mathrm{\Gamma}$-unitarity \sep 
\GQR\ algorithm \sep \GLan\ algorithm 

\end{keyword}

\end{frontmatter}

\section{Introduction}

In this paper, we consider the following  structured eigenvalue problem
\begin{equation}  \label{bsevp}
{\scrH\bx}\equiv
\begin{bmatrix}
\hphantom{-} A & \hphantom{-} B \\
-\overline{B}  & -\overline{A}%
\end{bmatrix}\begin{bmatrix}\bx_1\\ \bx_2\end{bmatrix}
=\lambda\, \bx,
\end{equation}%
where $A, B \in\mathbb{C}^{n\times n}$ with
$A^H=A,\ B^T=B$. Here, we denote by $A^H$, $\overline{A}$ and $B^T$, respectively, the conjugate transpose of $A$, the complex conjugate of $A$, and the transpose of $B$.
Such an eigenvalue problem \eqref{bsevp} is referred to as
a {\em Bethe-Salpeter eigenvalue problem} (BSEP). Any $\lambda\in\mathbb{C}$ and nonzero $\bx\in\mathbb{C}^{2n}$ that
satisfy \eqref{bsevp} are called an {\em eigenvalue\/} and its  corresponding 
{\em eigenvector} of $\scrH$, respectively. Accordingly, $(\lambda,\bx)$ is called an {\em eigenpair}.

The BSEP \eqref{bsevp}  arises from the discretization of 
the Bethe-Salpeter equation effectively  describing 
the bound states of a two-body quantum field theoretical system 
for the response function. Essentially, the Bethe-Salpeter equation 
comes from  the strict derivation 
of the problem within Green's-function theory, 
which can be used in 
the simulation of  electron-hole interaction effects (see 
\cite{dresselhausDSJ2007exciton}),  electron-positron 
interaction effects (see \cite{barbieriR1978solving}), 
etc.. More details about the Bethe-Salpeter 
equation can be found in     
\cite{onidaRR2002electronic, casida2009timedependent, parrishHSSM2013exact, reboliniTS2013electronic, 
sacs:10, rolg:10, casi:95} and the references therein.     
Usually, to solve the Bethe-Salpeter equation for a finite system all equations are 
projected onto an orthonormal spin-orbit basis, and then  we   obtain 
its corresponding discrete form \eqref{bsevp}.

Recently,  the BSEP \eqref{bsevp} is penetratively
studied under the condition that $\Gamma_{0}\scrH$ is positive
definite with $\Gamma_{0}=\diag(I_{n},  -I_{n})$ (see  \cite{yang:16,peter:15,peter:17}).
In \cite{yang:16}, the authors firstly show that  the  BSEP is equivalent to
a real Hamiltonian eigenvalue problem, then an efficient parallel algorithm is proposed
to compute all eigenpairs corresponding to those  positive eigenvalues of 
the equivalent real Hamiltonian matrix. 
By projecting the initial matrix $\scrH$ in \eqref{bsevp} onto a reduced basis set,  
\cite{peter:15, peter:17} apply low-rank and QTT tensor approximation to approximate the 
 BSEP  in large-scale. After that  a simplified  Bethe-Salpeter eigenvalue problem,  
with  the diagonal plus low-rank structure,  is solved.  
Though the methods proposed in \cite{yang:16,peter:15,peter:17} aim to solve the BSEP,
they all  are principally suitable for the  {\em linear response eigenvalue problem} (LREP),
a special case of the BSEP. Intrinsically, the LREP requires both
$A $ and $B$ 
are real symmetric matrices, i.e., 
$A^T=A\in \mathbb{R}^{n\times n}, B^T=B\in \mathbb{R}^{n\times n}$.   
A plenty of remarkable contributions have been made to solve  the LREP
and  please refer   to
 \cite{bali:12a, bali:13, teli:13, lllin:17} and references therein for more information.

People usually use  the classical QR algorithm and Lanczos  method to   
 compute all eigenvalues of a general matrix and 
a few number of eigenvalues and their associated eigenvectors 
of a Hermitian matrix, respectively.  
One can refer to \cite{demm:97, govl:96, saad2011numerical} and 
the references therein to find the details of  
the  QR algorithm and  the Lanczos  method. 
For the convergence of the Lanczos method please consult 
\cite{kaniel1966estimates, paige1971computation:thesis, saad1980rates}. 
The classical QR algorithm can be employed to solve the BSEP in modest  
size, however, the special structure of $\scrH$ (see the preliminary section) will not keep any longer. 
Also, we use    $\Pi$-tridiagonal matrices to solve the BSEP in this article, 
similarly to the Lanczos method. 
Many  literatures apply the tridiagonal or quasi-diagonal matrices 
to solve various problems, such as 
boundary value Poisson equations \cite{swarztrauber1974direct, 
smith1978numerical, sweet1973direct},  
circulant systems \cite{swarztrauber1977methods},   
time series analysis \cite{luatiP2010spectral}, 
control theory \cite{respondek2010numerical} and so on.  

In this paper, we will study a general BSEP  and the positive definite  restriction on 
$\Gamma_{0}\scrH$  is no longer assumed.
Upon the structure-preserving \GQR\ algorithm  proposed in \cite{lllin:17}, which
solves  the  LREP  by introducing the $\mathrm{\Gamma}$-orthogonality and performing
a series of $\mathrm{\Gamma}$-orthogonal transformations to the initial $\scrH\in \mathbb{R}^{2n\times 2n}$, 
we  put forward  a similar \GQR\ algorithm to solve  the BSEP   with
$\scrH\in \mathbb{C}^{2n\times 2n}$ being some dense  modest size matrix.  
Here,  $\mathrm{\Gamma}$-unitary transformations
are  used and  some implicit multishift  techniques are  employed.
As pointed in  \cite{yang:16},  the size of the BSEP  usually can be fairly large,
which actually is  proportional to the number of the electrons of one system.
To solve a  large-scale  BSEP,
a \GLan\   algorithm will be developed, where a decomposition
analogous to the classical Lanczos decomposition is constructed.
Some error bounds and a convergence theorem are given to illustrate 
the performance  of the proposed \GLan\   algorithm.

The rest of this paper is organized as follows. In Section~\ref{sec:defns}, we introduce
some basic definitions and give some preliminaries which will be used
in subsequent sections. The  \GQR\  algorithm to compute the BSEP  with modest  size 
is displayed in Section~\ref{sec:GG-QR}.
The  \GLan\   algorithm to solve the  large-scale BSEP
is developed in Section~\ref{sec:Lanalg}.
Numerical results  are presented in Section~\ref{sec:egs}.
Some concluding remarks are finally drawn in Section~\ref{sec:concl}.

\paragraph{\bf {Notations}} 
Throughout this paper,  we denote the  $j$-th column of the identity matrix $I$  by
 $\be_{j}$, whose size is determined  from  the context.
 The MATLAB expression, which specifies the submatrix with
the colon notation, will be used when necessary,
that is, $M(k:l, s:t)$ refers to the submatrix of 
$M$ formed by rows $k$ to $l$ and columns $s$ to $t$. 
For matrices $M$ and $N$, $\diag(M,N)$ represents 
the matrix  $\begin{bmatrix}
	M& \\ & N
\end{bmatrix}$. 
Additional, denote   $\Pi_{2k}=\begin{bmatrix}\bf 0&I_k\\I_k&\bf 0\end{bmatrix}$ for all positive integer $k$, and
$\Pi \equiv \Pi_{2n}$ for short, where $2n$ is the size of the matrix $\scrH$.

\section{Definitions and Preliminaries}\label{sec:defns}
Some definitions and preliminaries are  presented in this section,
where many  definitions and propositions  are quoted from  \cite{lllin:17},
most of which can be trivially extended
to the complex matrix $\scrH$.

\begin{definition}\label{df2.1}
Let $G\in \mathbb{C}^{2n\times 2m}$, where $m\leq n$.
$G$ is a {\em $\Pi^{\pm}$-matrix} if $G\Pi_{2m}=\pm\Pi_{2n}\overline{G}$,  
that is, $G$ is of  the form
\begin{equation}  \label{eq5}
G=
\begin{bmatrix}
\hphantom{-} G_1 & \hphantom{-} G_2 \\
\pm \overline{G}_2 & \pm \overline{G}_1%
\end{bmatrix}
\quad \text{ with }\quad G_1, \ G_2 \in\mathbb{C}^{n\times m}.
\end{equation}
Denote by $\bPi_{2n\times 2m}^{\pm}$
the set of all $2n\times 2m$  {\em $\Pi^{\pm}$-matrices}. When $m=n$, we simply write
$\bPi_{2n}^{\pm}$ for $\bPi_{2n\times 2n}^{\pm}$.
\end{definition}

\begin{definition}
\label{dfprin}
Let $G\in \bPi_{2n}^{\pm}$ as in \eqref{eq5} and set
\[
G_{1i}=G_1(1:i,1:i), \quad
G_{2i}=G_2(1:i,1:i).
\]
$\begin{bmatrix}
G_{1i} & G_{2i} \\
\pm \overline{G}_{2i} & \pm \overline{G}_{1i}%
\end{bmatrix}$ is called the $i$-th {\em $\Pi^{\pm }$-leading principal submatrix} of $G$
and its determinant is called the $i$-th {\em $\Pi^{\pm }$-leading
principal minor} of $G$.
\end{definition}

\begin{definition}\label{df2.2}
Let $G\in\bPi_{2n\times 2m}^{\pm}$ as in \eqref{eq5}.
\begin{enumerate}
  \item $G$ is  {\em $\Pi^{\pm}$-upper} ({\em $\Pi^{\pm}$-quasi-upper}) {\em triangular}, if
        $G_1$ is upper (quasi-upper)  triangular and $G_2$ is strictly upper triangular.

        Denote by $\mathbb{U}^{\pm}_{2n\times 2m}$ ($\qU^{\pm}_{2n\times 2m}$) the set  
		of all $2n\times 2m$  {\em $\Pi^{\pm}$-upper} ({\em $\Pi^{\pm}$ quasi-upper}) 
		{\em triangular matrices}, and write, for short,
        $\mathbb{U}^{\pm}_{2n}=\mathbb{U}^{\pm}_{2n\times 2n}$ and $\qU^{\pm}_{2n}=\qU^{\pm}_{2n\times 2n}$.
  \item $G$  is  {\em $\Pi^{\pm }$-diagonal} ({\em $\Pi^{\pm }$-quasi-diagonal}), if $G_{1}$
        is diagonal (quasi-diagonal) and $G_{2}$ is diagonal.

         Denote by $\mathbb{D}_{2n\times 2m}^{\pm }$
         ($\qD^{\pm}_{2n\times 2m}$) the set of  all $2n\times 2m$ 
		 {\em $\Pi^{\pm }$-diagonal} ({\em $\Pi^{\pm }$-quasi-diagonal}) {\em matrices},
         and write, for short,
         $\mathbb{D}^{\pm}_{2n}=\mathbb{D}^{\pm}_{2n\times 2n}$ and $\qD^{\pm}_{2n}=\qD^{\pm}_{2n\times 2n}$.
  \end{enumerate}
\end{definition}

Let $\mathbb{J}_{n}$ be the set of all $n\times n$ diagonal matrices 
with $\pm 1$ on the main diagonal and set
\begin{equation*}
\bGamma_{2n}=\{\diag(J,-J)\,:\,J\in \mathbb{J}_{n}\}.
\end{equation*}%
\label{eqw2.1} Note that $\Gamma_{0}=\diag(I_{n},-I_{n})\in \bGamma%
_{2n}$.

\begin{definition}\label{df2.3}
\begin{enumerate}
	\item Let $G\in\bPi^{-}_{2n}$ as in \eqref{eq5}. 
		$G$ is {\em $\Pi^{-}$-Hermitian} ({\em $\Pi^{-}$-Hermitian-tridiagonal)} 
		with respect to $\Gamma=\diag(J, -J)\in \bGamma_{2n}$ if $J G_1$ is Hermitian 
		(Hermitian-tridiagonal) and $J G_2$ is symmetric (diagonal).
	\item Let $G\in\bPi_{2n}^{+}$ as in \eqref{eq5}. $G$ is {\em $\Pi^{+}$-Hermitian} 
		({\em $\Pi^{+}$-Hermitian-tridiagonal}) with respect to 
		$\Gamma=\diag(J,-J)\in \bGamma_{2n}$   
		if $J G_1$ is  Hermitian (Hermitian-tridiagonal) and 
		$J G_2$ is skew-symmetric (zero matrix).  
  \end{enumerate}
\end{definition}

\begin{remark}\label{remark1}
\begin{enumerate}
	\item Both {\em $\Pi^{-}$-Hermitian} and {\em $\Pi^{+}$-Hermitian} matrices $G$, with 
		respect to $\Gamma\in\bGamma_{2n}$, satisfy $(\Gamma G)^H = \Gamma G$.  
	\item $\scrH$ is  {\em $\Pi^{-}$-Hermitian} with respect to $\Gamma_0$. And both 
			$\scrH^2$ and $\scrH^4$ are {\em $\Pi^{+}$-Hermitian} with respect to $\Gamma_0$. 
\end{enumerate}
\end{remark}

The following result   characterizes the proposition of 
the eigenvalues of the matrix $G\in\bPi_{2n}^-$, 
and then  $\scrH$. 

\begin{proposition}
\label{prop2.2}
Let $G\in\bPi_{2n}^-$. Then $G$ has $2n$ eigenvalues
appearing in pairs $(\lambda, -\overline{\lambda})$,
which degenerates to $(\lambda,-\lambda)$ for real  $\lambda$.
\end{proposition}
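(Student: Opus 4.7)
The plan is to prove the claim through a symmetry identity on the characteristic polynomial $p(\mu)=\det(\mu I_{2n}-G)$. From Definition~\ref{df2.1}, $G\in\bPi_{2n}^{-}$ is equivalent to $G\Pi=-\Pi\overline{G}$, and using $\Pi^{2}=I_{2n}$ this rewrites as $\Pi G\Pi=-\overline{G}$. This is the only structural ingredient I would need.

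First I would compute
\[
p(\mu)=\det(\mu I-G)=\det\bigl(\Pi(\mu I-G)\Pi\bigr)=\det(\mu I-\Pi G\Pi)=\det(\mu I+\overline{G}),
\]
where $\det(\Pi)^{2}=1$ was used in the second step and $\Pi G\Pi=-\overline{G}$ in the last. Taking complex conjugates and using $\det(\overline{M})=\overline{\det(M)}$ together with the fact that $2n$ is even, I obtain
\[
\overline{p(\mu)}=\det(\overline{\mu}I+G)=(-1)^{2n}\det(-\overline{\mu}I-G)=p(-\overline{\mu}).
\]
Hence the identity $\overline{p(\mu)}=p(-\overline{\mu})$ holds for every $\mu\in\mathbb{C}$, which shows the multiset of roots of $p$ is invariant under the involution $\lambda\mapsto-\overline{\lambda}$. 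Consequently each eigenvalue $\lambda$ of $G$ is paired with $-\overline{\lambda}$, both having the same algebraic multiplicity, and since $p$ has exactly $2n$ roots this accounts for the full spectrum. The degeneration to $(\lambda,-\lambda)$ when $\lambda\in\mathbb{R}$ is immediate from $\overline{\lambda}=\lambda$.

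I do not foresee a serious obstacle. The only mild subtlety is the fixed-point case $\lambda=-\overline{\lambda}$ (purely imaginary $\lambda$, including $0$): such an eigenvalue pairs with itself, which is automatically consistent with the polynomial identity and requires no separate argument. As a sanity check that also supplies the geometric pairing, if $Gx=\lambda x$ then conjugating and multiplying by $\Pi$ gives $\Pi\overline{G}\,\overline{x}=\overline{\lambda}\Pi\overline{x}$, and invoking $\Pi\overline{G}=-G\Pi$ yields $G(\Pi\overline{x})=-\overline{\lambda}(\Pi\overline{x})$, exhibiting an explicit eigenvector of $G$ associated with $-\overline{\lambda}$.
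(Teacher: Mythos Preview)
Your argument is correct and is essentially the paper's own proof: both exploit $G\Pi=-\Pi\overline{G}$ to derive the characteristic-polynomial symmetry $\det(G-\lambda I)=\overline{\det(G+\overline{\lambda}I)}$ (equivalently your $\overline{p(\mu)}=p(-\overline{\mu})$), from which the eigenvalue pairing follows. Your additional eigenvector computation is a nice bonus and matches what the paper later records as Lemma~\ref{lemma-eigenpairs}.
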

\begin{proof}
It holds that
\[
	\det(G-\l I)=(-1)^n \det ( G\Pi-\l \Pi)=(-1)^n \det(-\Pi \overline{G} -\l \Pi)= \overline{\det(G+\overline{\l} I)},
\]
then  the assertion follows immediately.
\end{proof}

\begin{definition}
\label{df2.4}
$Q\in\bPi_{2n\times 2m}^+$ is {\em $\mathrm{\Gamma}$-orthonormal\/}
with respect to $\Gamma\in\bGamma_{2n}$ if $\Gamma':=Q^H \Gamma Q\in \bGamma_{2m}$;
if also $m=n$, we say
it is {\em $\mathrm{\Gamma}$-unitary} with respect to $\Gamma$.
\end{definition}

Denote by $\GO_{2n\times 2m}$  the set of all $2n\times 2m$ 
{\em $\mathrm{\Gamma}$-orthnormal matrices}
and write $\GO_{2n}=\GO_{2n\times 2n}$ for simplicity.
Often, for short, we say $Q\in\GO_{2n\times 2m}$ is {\em $\mathrm{\Gamma}$-orthnormal} 
({\em $\mathrm{\Gamma}$-unitary}), which implies that there exists some $\Gamma\in\bGamma_{2n}$ such that $Q^H \Gamma Q\in \bGamma_{2m}(\bGamma_{2n})$.
More propositions on   matrices  in $\GO_{2n}$ please refer to \cite{lllin:17}.

The following lemmas further illustrate the eigen-structure of  $\scrH$. 

\begin{lemma}\label{lemma-H-hermitian}
It holds that $\Gamma_0\scrH \Gamma_0=\scrH^{H}$.
\end{lemma}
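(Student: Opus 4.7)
The proof should be a direct block-matrix computation, since the hypotheses $A^H=A$ and $B^T=B$ are exactly what is needed to match the two sides block-by-block.

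The plan is to first evaluate the left-hand side. Since $\Gamma_0=\diag(I_n,-I_n)$, left-multiplying by $\Gamma_0$ negates the second block row of $\scrH$ and right-multiplying by $\Gamma_0$ negates the second block column. Carrying out this sign bookkeeping gives
\[
\Gamma_0 \scrH \Gamma_0 \;=\; \begin{bmatrix} A & -B \\ \overline{B} & -\overline{A}\end{bmatrix}.
\]
Then I would compute $\scrH^H$ directly as the block conjugate transpose: the $(1,1)$ block is $A^H$, the $(1,2)$ block is $(-\overline{B})^H=-B^T$, the $(2,1)$ block is $B^H$, and the $(2,2)$ block is $(-\overline{A})^H=-A^T$. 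Applying the two hypotheses $A^H=A$ (so $A^T=\overline{A}$) and $B^T=B$ (so $B^H=\overline{B}$) turns this into exactly the same block matrix displayed above, and the identity $\Gamma_0\scrH\Gamma_0=\scrH^H$ drops out.

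There is no real obstacle here; the only place that could trip one up is keeping the conjugation and transposition straight in the off-diagonal blocks, namely recognising that $(-\overline{B})^H=-B^T$ (not $-\overline{B}^H$) and that the symmetry of $B$ is used to convert $B^T$ to $B$ and $B^H$ to $\overline{B}$, while the Hermitian property of $A$ is used to convert $A^T$ to $\overline{A}$. Once these four block identities are checked, the lemma is immediate.
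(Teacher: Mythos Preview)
Your proposal is correct and is precisely the direct verification the paper has in mind; the paper's own proof simply reads ``It can be verified directly.'' Your explicit block-by-block check, using $A^H=A$ and $B^T=B$ to match the four blocks, is exactly that verification spelled out.
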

\begin{proof}
It can be verified directly.
\end{proof}

\begin{lemma}\label{lemma-eigenpairs}
Let $\left( \lambda ,\bx\right)$ be an eigenpair of $\scrH$,
then $\left( -\overline{\lambda} ,\Pi\overline{ \bx}\right) $ is
also an eigenpair of $\scrH$. Furthermore, if $\mbox{Im}(\lambda)\neq0$, then
$\left( -\lambda,(\Gamma_0\Pi \bx)^T \right) $ and $\left( \overline\lambda,(\Gamma_0 \bx)^H\right)$
act as the left eigenpairs of $\scrH$. 
\end{lemma}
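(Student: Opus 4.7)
The plan is to derive all three assertions from two ingredients already available in the excerpt: the $\Pi^-$-structure $\scrH\,\Pi = -\Pi\,\overline{\scrH}$ (which follows from $\scrH\in\bPi_{2n}^-$ together with Definition~\ref{df2.1}), and the $\Gamma_0$-Hermitian identity $\Gamma_0\,\scrH\,\Gamma_0 = \scrH^H$ from Lemma~\ref{lemma-H-hermitian}. Throughout, I will use that $\Gamma_0$ and $\Pi$ are real involutions, so $\Gamma_0^H=\Gamma_0=\Gamma_0^T$, $\Pi^H=\Pi=\Pi^T$, and $\Gamma_0^2=\Pi^2=I$.

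For the first assertion, I would take entrywise complex conjugates of $\scrH\bx=\lambda\bx$ to obtain $\overline{\scrH}\,\overline{\bx}=\overline{\lambda}\,\overline{\bx}$, then premultiply by $\Pi$ and use $\Pi\,\overline{\scrH}=-\scrH\,\Pi$ to rewrite the left-hand side as $-\scrH(\Pi\overline{\bx})$. This gives $\scrH(\Pi\overline{\bx})=-\overline{\lambda}\,(\Pi\overline{\bx})$, which is precisely the right eigenpair claimed.

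For the left eigenpair at $\overline{\lambda}$, I would take the conjugate transpose of $\scrH\bx=\lambda\bx$ to get $\bx^H\scrH^H=\overline{\lambda}\,\bx^H$, substitute $\scrH^H=\Gamma_0\,\scrH\,\Gamma_0$, and postmultiply by $\Gamma_0$ to conclude $(\Gamma_0\bx)^H\scrH=\overline{\lambda}\,(\Gamma_0\bx)^H$. For the left eigenpair at $-\lambda$, I would apply the very same construction to the right eigenpair $(-\overline{\lambda},\Pi\overline{\bx})$ just produced: this yields a left eigenvector $(\Gamma_0\Pi\overline{\bx})^H$ with eigenvalue $\overline{-\overline{\lambda}}=-\lambda$, after which a short simplification using the reality of $\Gamma_0$ and $\Pi$ (namely $(\Gamma_0\Pi\overline{\bx})^H=\bx^T\Pi\,\Gamma_0=(\Gamma_0\Pi\bx)^T$) recasts the row vector in the form stated.

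No step presents a real obstacle; the entire lemma reduces to a few lines of direct manipulation of the two structural identities cited above. The only point worth flagging is why the extra hypothesis $\mbox{Im}(\lambda)\neq 0$ appears in the second half: it is not needed for any of the displayed identities to hold, but it is exactly the condition under which the four scalars $\lambda,\overline{\lambda},-\lambda,-\overline{\lambda}$ are mutually distinct, so that the four (right and left) eigenpairs carry genuinely independent spectral information rather than collapsing onto each other when $\lambda$ happens to be real.
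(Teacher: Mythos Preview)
Your proof is correct and follows exactly the route indicated by the paper, which simply cites Lemma~\ref{lemma-H-hermitian} together with the identity $\scrH\Pi=-\Pi\overline{\scrH}$; you have merely written out the three short computations that those two facts encode. One small inaccuracy in your closing remark: the hypothesis $\mbox{Im}(\lambda)\neq 0$ does not by itself force all four scalars $\lambda,\overline{\lambda},-\lambda,-\overline{\lambda}$ to be mutually distinct (a purely imaginary $\lambda$ gives only two), but this is an interpretive aside and does not affect the validity of the displayed identities.
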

\begin{proof}
The results follow from Lemma \ref{lemma-H-hermitian} and the fact
$\scrH\Pi =-\Pi {\overline{\scrH}}$.
\end{proof}

\begin{lemma}
Let $(\lambda,\bx)$ and $(\mu,\by)$ be two  eigenpairs  of $\scrH$ with $\mu\neq \overline{\lambda}$,
it then holds that $x^{H}\Gamma_0 y=0$.
\end{lemma}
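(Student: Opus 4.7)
The plan is to derive the orthogonality relation from Lemma \ref{lemma-H-hermitian}, which says $\Gamma_0 \scrH \Gamma_0 = \scrH^H$, together with the two eigenvalue equations $\scrH\bx = \lambda\bx$ and $\scrH\by = \mu\by$. The idea is the standard one for Hermitian-like bilinear forms: compute the quantity $\bx^H \Gamma_0 \scrH \by$ in two different ways and equate them.

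First I would evaluate $\bx^H \Gamma_0 \scrH \by$ directly using the eigenequation for $\by$, obtaining $\mu\, \bx^H \Gamma_0 \by$. Next, I would rewrite $\Gamma_0 \scrH = \scrH^H \Gamma_0$, which follows immediately from Lemma \ref{lemma-H-hermitian} upon right-multiplying by $\Gamma_0$ and using $\Gamma_0^2 = I$. This yields $\bx^H \Gamma_0 \scrH \by = \bx^H \scrH^H \Gamma_0 \by = (\scrH \bx)^H \Gamma_0 \by = \overline{\lambda}\, \bx^H \Gamma_0 \by$.

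Equating the two expressions gives $(\mu - \overline{\lambda})\, \bx^H \Gamma_0 \by = 0$, and the hypothesis $\mu \neq \overline{\lambda}$ forces $\bx^H \Gamma_0 \by = 0$, as required.

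There is no real obstacle here; the only subtlety is being careful about the conjugate that appears when moving $\lambda$ across the conjugate transpose (so the relevant exclusion is $\mu = \overline{\lambda}$, not $\mu = \lambda$), which is precisely the hypothesis of the lemma and is also consistent with the eigenvalue pairing $(\lambda, -\overline{\lambda})$ described in Proposition \ref{prop2.2} and the left-eigenpair structure from Lemma \ref{lemma-eigenpairs}.
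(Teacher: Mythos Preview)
Your proposal is correct and follows essentially the same approach as the paper: both use Lemma~\ref{lemma-H-hermitian} to compute $\bx^H\Gamma_0\scrH\by$ in two ways, obtaining $\mu\,\bx^H\Gamma_0\by=\overline{\lambda}\,\bx^H\Gamma_0\by$, and conclude from $\mu\neq\overline{\lambda}$. The paper phrases the intermediate step as $(\Gamma_0\bx)^H\scrH=\overline{\lambda}(\Gamma_0\bx)^H$, but this is the same computation.
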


\begin{proof}
It follows from Lemma \ref{lemma-H-hermitian} that $(\Gamma_0x)^{H}\scrH=\overline{\lambda}(\Gamma_0x)^{H}$,
leading to $\mu x^{H}\Gamma_0y=x^{H}\Gamma_0\scrH y=\overline{\lambda}(\Gamma_0x)^{H}y$.
Then the result holds since $\mu\neq \overline{\lambda}$.
\end{proof}

\section{$\mathrm\Gamma$QR  Factorization and $\mathrm\Gamma$QR  Algorithm}\label{sec:GG-QR}
In this section, the \GQR\ factorization  developed in \cite{lllin:17} will be extended to the complex $\Pi^\pm$-matrix.
Then a \GQR\ algorithm similar to that  in  \cite{lllin:17} is proposed to solve  the BSEP
\eqref{bsevp} with  modest  size.

\subsection{\GQR\ Factorization} \label{sec:GG-QRFC}
\begin{definition}\label{df3.1}
$G=QR$ is called a
{\em \GQR\ factorization} of $G\in\bPi_{2n\times 2m}^{\pm}$ with respect to $\Gamma\in\bGamma_{2n}$ if
$R\in\mathbb{U}_{2n\times 2m}^{\pm }$ and
$Q\in\GO_{2n}$ with respect to $\Gamma$
or if $R\in\mathbb{U}_{2m}^{\pm }$ and
$Q\in\GO_{2n\times 2m}$ with respect to $\Gamma$.
\end{definition}

We call the case  that  $R\in\mathbb{U}_{2m}^{\pm }$ and
$Q\in\GO_{2n\times 2m}$ as a {\em skinny\/} \GQR \  factorization.
Theorem \ref{thm:GQR-basic} reveals that  (\romannumeral1) for a  given
$\Gamma\in\bGamma_{2n}$,  almost every  $G\in\bPi_{2n\times 2m}^{\pm}$
has a \GQR\ factorization with respect to $\Gamma$;
 (\romannumeral2) the \GQR\  factorization is unique upon  diagonal transformations, 
 whose diagonal entries are with moduli  $1$.



\begin{theorem}\label{thm:GQR-basic}
Suppose that $G\in\bPi_{2n\times 2m}^{\pm}  (m\leq n) $ is of full column rank and 
$\Gamma\in\bGamma_{2n}$.
\begin{enumerate}
\item[{\em (\romannumeral1)}] $G$ has a \GQR\ factorization with respect to
        $\Gamma$ if and only if no $\Pi^{-}$-leading principal minor of $G^H\Gamma G$ 
		vanishes; and 
\item[{\em (\romannumeral2)}] let $G=QR=\widetilde Q \widetilde R$
        be two  skinny \GQR\ factorizations of $G$ with respect to $\Gamma$,
        that is, $\widetilde Q^H\Gamma\widetilde Q=Q^H\Gamma Q\in\bGamma_{2m}$, 
        then there is a diagonal matrix $\Theta=\diag(D, \overline{D})$ with
        $D=\diag(\delta_1, \ldots, \delta_m), |\delta_j|=1, j=1, \ldots, m$,
        such that $  Q=\widetilde Q \Theta$
        and $R=\Theta^H \widetilde R$. Particularly, when all diagonal entries
        in the top-left quarters of $R$ and $\wtd R$ are positive, we have  $\Theta=I_{2m}$,
        implying  $Q=\wtd Q$,   $R=\wtd R$.
%
\end{enumerate}
\end{theorem}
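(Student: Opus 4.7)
The plan hinges on the identity $G^H\Gamma G = R^H\Gamma' R$ with $\Gamma':=Q^H\Gamma Q\in\bGamma_{2m}$, valid for any \GQR\ factorization. A direct calculation using $G\Pi_{2m}=\pm\Pi_{2n}\overline G$ and the anticommutation $\Gamma\Pi_{2n}=-\Pi_{2n}\Gamma$ first shows $G^H\Gamma G\in\bPi^-_{2m}$, so the $\Pi^-$-leading principal minors appearing in the hypothesis are well defined.

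For the ``only if'' direction in (i), I would use the fact that, because $R_1$ is upper triangular and $R_2$ strictly upper triangular, the leading $i\times i$ block of every block of $R^H\Gamma'R$ depends only on the corresponding leading $i\times i$ blocks of $R_1$, $R_2$ and $J'$. Hence the $i$-th $\Pi^-$-leading principal submatrix of $G^H\Gamma G$ factors as $R_i^H\Gamma_i'R_i$, where $R_i$ is the $i$-th $\Pi^\pm$-leading principal submatrix of $R$ and $\Gamma_i'$ is the corresponding leading block of $\Gamma'$. An interleaving row/column permutation then turns $R_i$ into a block upper triangular matrix with $2\times 2$ diagonal blocks $\diag((R_1)_{jj},\pm\overline{(R_1)_{jj}})$, so $|\det R_i|=|\det R_{1i}|^2$. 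Full column rank of $G$ forces $R$, and hence every $(R_1)_{jj}$, to be nonzero, so every $\Pi^-$-leading principal minor of $G^H\Gamma G$ is nonzero.

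For the ``if'' direction I would induct on $m$, appending one $\Pi^\pm$-column pair at a time. The base case $m=1$ is a direct $2\times 2$ construction: under the nonvanishing hypothesis one can explicitly build $R\in\mathbb U^\pm_2$ with $R^H\diag(\epsilon,-\epsilon)R=G^H\Gamma G$ for a suitable $\epsilon\in\{\pm 1\}$, and then $Q:=GR^{-1}$ is $\Gamma$-orthonormal. For the inductive step, apply the hypothesis to the first $m-1$ $\Pi^\pm$-block-columns of $G$, then $\Gamma$-orthogonalise the next column-pair against the previously built $Q$-columns and normalise. The main obstacle is verifying that this orthogonalisation preserves the $\bPi^+$ structure of $Q$ and the $\mathbb U^\pm$ structure of $R$; it does, because the $\Pi^\pm$-symmetry of $G$ pairs each column with its partner, so a single Gram--Schmidt step automatically handles both, and the ratio of the $m$-th to the $(m-1)$-st $\Pi^-$-leading principal minor supplies the nonzero indefinite length-squared whose sign fixes the next diagonal entry of $\Gamma'$.

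For (ii), set $T:=\widetilde R R^{-1}$, so $Q=\widetilde Q T$. Since $R$ and $\widetilde R$ share the $\Pi^\pm$-sign of $G$, routine sign-tracking puts $T$ in $\mathbb U^+_{2m}$ with $T_1$ invertible, and the common value $\Gamma'=Q^H\Gamma Q=\widetilde Q^H\Gamma\widetilde Q$ forces $T^H\Gamma'T=\Gamma'$. I would then inspect this identity column-by-column: the $(1,1)$ entry of its $(1,1)$ block yields $|t_{11}|=1$; the $(1,j)$ entries of its $(1,2)$ and $(1,1)$ blocks force $(T_2)_{1j}=(T_1)_{1j}=0$ for $j>1$; iterating on the trailing $(m-1)$-dimensional blocks produces $T=\diag(D,\overline D)$ with $D=\diag(\delta_1,\ldots,\delta_m)$ and $|\delta_j|=1$. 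Setting $\Theta:=T^{-1}=T^H$ then gives $Q=\widetilde Q\Theta$ and $R=\Theta^H\widetilde R$. When the diagonals of $R_1$ and $\widetilde R_1$ are all positive, the identity $(R_1)_{jj}=\overline{\delta_j}(\widetilde R_1)_{jj}>0$ combined with $|\delta_j|=1$ forces $\delta_j=1$ and $\Theta=I_{2m}$.
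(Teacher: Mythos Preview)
The paper does not actually prove this theorem; its entire proof reads ``The proof can be parallel drawn from \cite{lllin:17}.'' Your proposal is therefore a genuine proof where the paper offers none, and the line of argument you give---the identity $G^H\Gamma G=R^H\Gamma'R$ plus the block-triangular structure of $R$ for part~(i), and the relation $T^H\Gamma'T=\Gamma'$ with $T=\widetilde R R^{-1}\in\mathbb U^+_{2m}$ forcing $T$ to be $\Pi^+$-diagonal for part~(ii)---is exactly the natural extension of the real case treated in \cite{lllin:17}. The inductive Gram--Schmidt construction for the ``if'' direction is sound: the key point, that the residual $z_m$ after $\Gamma$-orthogonalisation has $z_m^H\Gamma z_m\neq 0$, does follow from comparing the $m$-th and $(m{-}1)$-st $\Pi^-$-leading principal minors as you indicate, and the identity $v^H\Gamma\Pi\bar v=0$ for any $v\in\mathbb C^{2n}$ ensures the new column pair is automatically $\Gamma$-orthogonal to its own partner.

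One small slip in part~(ii): from $Q=\widetilde Q T$ and $R=T^{-1}\widetilde R$ with $T$ unitary diagonal, you should take $\Theta=T$ (not $\Theta=T^{-1}$). Then $Q=\widetilde Q\Theta$ holds directly, and $R=T^{-1}\widetilde R=T^H\widetilde R=\Theta^H\widetilde R$ since $T$ is unitary. With that correction your final positivity argument $(R_1)_{jj}=\overline{\delta_j}(\widetilde R_1)_{jj}$ goes through verbatim.
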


\begin{proof}
The proof can be parallel  drawn from  \cite{lllin:17}.
\end{proof}

We can obtain  a \GQR\ factorization for a  given $G\in\bPi_{2n\times 2m}^{\pm}$  by
performing  a sequence of  hyperbolic  Householder transformations   and some  hyperbolic Givens transformations.
Before giving both transformations, we  present  the
Householder-like  transformation introduced in \cite{buns:81},
which is $(J_1, J_2)$-unitary 
and can   simultaneously eliminate some elements of a given
vector $\bu\in\mathbb{C}^k$.

Let $\ba=\begin{bmatrix}
	\alpha_1 & \dots & \alpha_k
\end{bmatrix}
\in\mathbb{C}^k (1\leq k \leq n)$, $J=\diag(\jmath_1, \cdots, \jmath_k)\in\mathbb{J}_k$,
which satisfy that $\ba^H J\ba\ne0$.
Choose $r$ $(2\leq r\leq k)$ so that $\jmath_{r}\ba^{H}J\ba>0$. Now  set $P_u\in\mathbb{C}^{k\times k}$ be the
permutation matrix which  interchanges   the first row  and the $r$-th row.
Write  $\hat{\ba}=P_u\ba=\begin{bmatrix}\hat{\alpha}_1&\cdots&\hat{\alpha}_k\end{bmatrix}^T $
	and $\widehat{J}=P_uJP_u=\diag(\hat{\jmath}_1, \ldots, \hat{\jmath}_k)$, where
$\hat{\jmath}_1 \ba^H J \ba=\hat{\jmath}_1 \hat{\ba}^H \widehat{J} \hat{\ba}>0$.
A Householder-like transformation proposed in \cite{buns:81} is to eliminate
all elements $\alpha_2,\dots,\alpha_k$ as follows. Let
\begin{align}\label{Householder}
\widehat H(\ba)^{-1}=I-\frac{\hat{\jmath}_1}{\beta}(\hat{\ba}-\alpha \be_1)(\hat{\ba}-\alpha \be_1)^{H}\widehat{J}, \qquad
H(\ba)^{-1}= \widehat H(\ba)^{-1}P_u,
\end{align}
where $\alpha=-\sign(\hat{\alpha}_1)\sqrt{\hat{\jmath}_1 \hat{\ba}^H \widehat{J} \hat{\ba}}
=-\sign(\hat{\alpha}_1)\sqrt{\hat{\jmath}_1 \ba^H J \ba} $ and $\beta=\overline{\alpha}(\alpha-\hat{\alpha}_1)$.
It then holds that
\begin{equation}\label{orth-householder}
H(\ba)^{-1}\ba = [\widehat H(\ba)^{-1}P_u]\ba=\widehat H(\ba)^{-1}\hat{\ba}=\alpha \be_1
,\ \
(H(\ba))^H J {H}(\ba)=\widehat{J}.
\end{equation}

\paragraph{Hyperbolic Householder transformation}
Let $\bu\in\mathbb{C}^{2n}$,
\[
	\Gamma=\diag(\gamma_1,\cdots,\gamma_n,-\gamma_1,\cdots,-\gamma_n)\in\bGamma_{2n},
\]
and $1\leq \ell <m\leq n$.
We are to zero out either the $(l+1)$-th to the $m$-th
elements or the $(n+l+1)$-th to the $(n+m)$-th elements of $\bu$ by applying the Householder-like transformation given above.
There are two cases:
\[
	\begin{cases}
\text{Case 1:}&  \ba\leftarrow\bu_{(\ell:m)},\ J=\diag(\gamma_\ell, \ldots, \gamma_{m});  \\
\text{Case 2:}& \ba\leftarrow\bu_{(\ell':m')} \text{ with } \ell'=\ell+n \text{ and } m'=m+n,  \ J=\diag(\gamma_\ell, \ldots, \gamma_{m}).
	\end{cases}
\]

Using \eqref{orth-householder} we can construct a hyperbolic Householder
 transformation $Q$
as follows:
\[
Q^{-1}=\diag(I_{l-1}, \  H^{-1}(\bu), \ I_{n-m}, \ I_{l-1}, \ \overline{H}^{-1}(\bu), \ I_{n-m}).
\]
Clearly, it  holds that
\[
Q^{H}\Gamma_{2n}Q=\widetilde{\Gamma}_{2n}
=\diag(\tilde{\gamma}_1,\cdots,\tilde{\gamma}_n,-\tilde{\gamma}_1,\cdots,-\tilde{\gamma}_n)\in\bGamma_{2n}
\]
with $\tilde{\gamma}_l=\gamma_r, \tilde{\gamma}_r=\gamma_l$ and
$\tilde{\gamma}_j=\gamma_j$ for $j\neq l, \ r$, 
where $r$ determines the permutation matrix $P_u$, i.e., the $r$-th row  permute with the first row of $\bu$.

The following introduced hyperbolic Givens transformation is used to zero out one element of a $2n$-length vector.

\paragraph{Hyperbolic Givens transformation}
Let $\begin{bmatrix}\ba^T  & \bb^T  \end{bmatrix}^T $ with $\ba, \bb\in\mathbb{C}^{n}$
and $\Gamma_{2n}=\diag(\gamma_1,\cdots,\gamma_n,-\gamma_1,\cdots,-\gamma_n)\in\bGamma_{2n}$.
 We are to eliminate the $l$-th entry of $\bb$ with the $l$-th one in $\ba$.
Denote by $\alpha=\ba(l)$, $\beta=\bb(l)$ and define
\[
	\begin{cases}
&c=\dfrac{1}{\sqrt{1-r^{2}}},\ s=\dfrac{r}{\sqrt{1-r^{2}}}\ \ \text{ with } \
                   r=\dfrac {\beta}{\alpha}, \quad  \text{ if }\ |\alpha |>|\beta |; \\
&c=\dfrac{r}{\sqrt{1-r^{2}}},\ s=\dfrac{1}{\sqrt{1-r^{2}}}\ \ \text{ with } \
                   r=\dfrac {\alpha}{\beta},  \quad \text{ if }\ |\alpha |<|\beta |.%
	\end{cases}
\]
Now we define   the hyperbolic Givens transformation $Q$  through its inverse:
\[
Q^{-1}=\begin{bmatrix}C&S\\ \overline{S}&\overline{C}\end{bmatrix} \, \text{with} \,
C=\diag(1, \ldots, c, 1, \ldots, 1), \ S=\diag(0, \ldots, 0, s, 0, \ldots, 0),
\]
where $c$ and $s$, respectively,  are  the $l$-th diagonal elements of $C$ and $S$.
It is simple to verify that
\[
	Q^{-1}\Gamma_{2n}Q^{-H}=\widetilde{\Gamma}_{2n}
=\diag(\tilde{\gamma}_1, \ldots, \tilde{\gamma}_n, -\tilde{\gamma}_1, \ldots, -\tilde{\gamma}_n),
\]
where $\tilde{\gamma}_j=\gamma_j$ when $j\neq l$ $(1\leq j\leq n)$ and $\tilde{\gamma}_l=\gamma_l(c^{2}-s^{2})$,
suggesting that $Q^{H}\Gamma_{2n} Q \equiv \widetilde{\Gamma}_{2n}\in\bGamma_{2n}$.

\begin{remark}
The hyperbolic rotation parameters $c$ and $s$ 
will not exist if $|\alpha|=|\beta|$.
However, as claimed in \cite{fllw:92},
such a case may occur  when the matrix is artificially designed.
Clearly, it  would be  numerical instability
once $|\alpha|$ is pathologically close to $|\beta|$, where serious cancellations  could
occur as discussed  in \cite{buns:81}.
Nevertheless, it is possible to reorganize the  computation process  to successfully
avoid the cancellations (see \cite{alpp:88}).
\end{remark}

\subsection{\GQR\ Algorithm}\label{sec:GQRalg}
Although the  discussion in the last subsection is applicable to all $\Pi^\pm$-matrices,
we only  focus on the  $\Pi^{-}$-Hermitian matrix 
$\scrH$ with respect to $\Gamma_0 \in\bGamma_{2n}$ in this subsection. 
We  will extend  the structure-preserving \GQR\ algorithm in   
\cite{lllin:17}  to the BSEP \eqref{bsevp}.

Generally, we shall  compute   a sequence of $\mathrm{\Gamma}$-unitary matrices $\{Q_{i}\}$,
based on the  \GQR\ factorizations of $\scrH_j$, such that
\begin{align}\label{eq-iteration}
\scrH_{j+1}=Q^{-1}_j \scrH_j Q_j,\quad
Q^H_j \Gamma_jQ_j=\Gamma_{j+1}
\quad\mbox{for $j=1,2,\ldots $},
\end{align}
where initially $\scrH_0 \equiv \scrH$. In practical, for the sake of  many numerical concerns, including
structure preserving, numerical stability and the amount of calculations,
as in the  classical QR algorithm,  we  firstly reduce $\scrH$ to a $\Pi^-$-Hermitian-tridiagonal
matrix with respect to some $\Gamma\in \bGamma_{2n}$, by applying a series
of hyperbolic Householder transformations and hyperbolic Givens transformations.
Then  a  implicit  \GQR\ factorization will be proceed with,
where the shift technique is incorporated  in  the whole process to
accelerate the convergence.

Here, we just give   a condensed description of the   implicit multishift   \GQR\ algorithm.
Readers who are interested in details
please  refer to \cite{lllin:17}. Firstly, we  reduce $\scrH$ to a
$\Pi^-$-Hermitian-tridiagonal matrix $\scrH_1$, and then
a $\mathrm{\Gamma}$-unitary transformation $Q$ is constructed to
rotate the first column of $p(\scrH_1)$ to a vector parallel to
$\be_1$, where the  filtering polynomial $p(x)$ is defined as
\begin{equation}  \label{px}
\begin{cases}
p(x)=(x-{\lambda})(x+{\lambda}) & \text{for real or purely imaginary ${\lambda}$};
\\
p(x)=(x-{\lambda})(x+{\lambda})(x-\overline{\lambda})(x+\overline{\lambda}) & \text{%
for complex ${\lambda}$}.
\end{cases}%
\end{equation}
Note that  $p(\scrH_1)\in\bPi_{2n}^+$. Ultimately, some
hyperbolic Householder transformations and hyperbolic Givens transformations, which are
$\mathrm{\Gamma}$-unitary,  will be pursued to
fit $Q^{-1} \scrH_1 Q$ to a new $\Pi^-$-Hermitian-tridiagonal matrix.

The following part devotes to reveal the convergence of the  \GQR\ iteration  \eqref{eq-iteration} without
cooperating with any  shift strategies.  

\begin{lemma}\label{lemma-decomp}
Assume that all eigenvalues of $\scrH$ are simple, then there exist $X\in\bPi_{2n}^+$ and $\Lambda\in\mathbb{D}^{-}_{2n}$
 such that  $\scrH X=X\Lambda$. Furthermore, let
 $P=[\be_1, \be_{n+1}, \be_{2}, \be_{n+2}, \cdots, \be_{n}, \be_{2n}]\in\mathbb{R}^{2n\times 2n}$,
 it then holds that
\[
	P^T  \Lambda P=\diag(\Lambda_1,\cdots,\Lambda_\ell)
\]
 with    $\lambda(\Lambda_i)=\{\pm\lambda_i\}$ or
$\lambda(\Lambda_i)=\{\pm\lambda_i,\pm\overline{\lambda}_i\}$.
\end{lemma}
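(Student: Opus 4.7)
The plan is to build $X$ and $\Lambda$ column by column by grouping the eigenpairs of $\scrH$ according to the symmetry supplied by Proposition \ref{prop2.2} and Lemma \ref{lemma-eigenpairs}. Simplicity of the spectrum gives ordinary diagonalisability of $\scrH$, so the real task is to organise the eigenvectors so that $X \in \bPi_{2n}^+$ and the diagonalising image $\Lambda$ sits in $\mathbb{D}_{2n}^-$; the block-diagonal form of $P^{T}\Lambda P$ then follows from the interleaving nature of $P$.

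First I would partition the spectrum into three orbit types: (a)~real nonzero $\lambda$, giving the pair $\{\lambda,-\lambda\}$; (b)~purely imaginary nonzero $\lambda$, where $-\overline{\lambda}=\lambda$ while $\overline{\lambda}=-\lambda$ is a further eigenvalue by Lemma \ref{lemma-eigenpairs}; (c)~$\lambda$ with $\mathrm{Re}(\lambda)\,\mathrm{Im}(\lambda)\neq 0$, producing the quadruple of four distinct eigenvalues $\{\lambda,-\overline{\lambda},\overline{\lambda},-\lambda\}$. In cases (a) and (c), for an eigenvector $\bu$ of $\lambda$, Lemma \ref{lemma-eigenpairs} provides $\Pi\overline{\bu}$ as an eigenvector for the distinct eigenvalue $-\overline{\lambda}$; setting $X(:,i)=\bu$ and $X(:,n+i)=\Pi\overline{\bu}$ makes that pair of columns conform to the $\bPi_{2n}^+$ template, and matching $\scrH X=X\Lambda$ on these two columns forces $D_1(i,i)=\lambda$ and $D_2(i,i)=0$ by linear independence of $\bu$ and $\Pi\overline{\bu}$.

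The subtle case is (b). Here $\Pi\overline{\bu}$ lies in the same one-dimensional $\lambda$-eigenspace as $\bu$, so after a unimodular rescaling one may assume $\Pi\overline{\bu}=\bu$; a direct assignment of eigenvectors to columns $i$ and $n+i$ would then duplicate the column and destroy the rank of $X$. I would get around this by mixing: let $\bv$ be the analogously normalised eigenvector for $-\lambda$, so that $\Pi\overline{\bv}=\bv$, and set $X(:,i)=\bu+\mathrm{i}\,\bv$ and $X(:,n+i)=\bu-\mathrm{i}\,\bv=\Pi\overline{(\bu+\mathrm{i}\,\bv)}$. The $\bPi_{2n}^+$ form is preserved, and equating the $\bu$- and $\bv$-components of $\scrH X=X\Lambda$ on these two columns yields $D_1(i,i)=0$ and $D_2(i,i)=\lambda$, still compatible with $D_1$ and $D_2$ being diagonal.

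With $X\in\bPi_{2n}^+$ and $\Lambda\in\mathbb{D}_{2n}^-$ satisfying $\scrH X=X\Lambda$ in hand, the last step is cosmetic: the permutation $P$ interleaves positions $(i,n+i)$ into $(2i-1,2i)$, so $P^{T}\Lambda P$ becomes block diagonal with $2\times 2$ blocks whose entries are $D_1(i,i), D_2(i,i), -\overline{D_2(i,i)}, -\overline{D_1(i,i)}$. By the case analysis each such block has spectrum $\{\pm\lambda_i\}$; in case (c) I would order the columns of $X$ so that the two $2\times 2$ blocks arising from the paired orbits $\{\lambda,-\overline{\lambda}\}$ and $\{\overline{\lambda},-\lambda\}$ sit adjacent, merging into a single $4\times 4$ block with spectrum $\{\pm\lambda_i,\pm\overline{\lambda}_i\}$. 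The main obstacle I anticipate is exactly the purely imaginary case (b): the $\bPi_{2n}^+$ constraint clashes with the one-dimensional eigenspace, and resolving it requires the $\bu\pm\mathrm{i}\,\bv$ mixing rather than a direct use of eigenvectors as columns.
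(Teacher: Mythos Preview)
Your argument is correct and follows essentially the same route as the paper: both proofs observe that the only nontrivial case is a purely imaginary eigenvalue $\lambda=\imath\omega$, normalise its eigenvector so that $\Pi\overline{\bu}=\bu$ (the paper writes this as $\widehat{\bx}=\bar{\varrho}\bx$), and then take linear combinations of the eigenvectors for $\pm\imath\omega$ to produce a $\bPi^{+}$-conforming pair of columns. Your mixing $\bu\pm\imath\bv$ and the paper's $[\bu,\bv]=2e^{\imath\pi/4}[\bx,\by]S$ differ only by a unimodular scaling, and the resulting $2\times 2$ block in $\Lambda$ is the same up to that scaling; your more explicit treatment of the real and generic-complex cases is simply what the paper hides behind the word ``obviously''.
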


\begin{proof}
Obviously, we just need to prove that for the simple  eigenvalue $\imath\omega (0\neq\omega\in\mathbb{R})$,
there exists   $[\bu\ \bv]\in\bPi_{2n\times 2}^+$ such that
\begin{equation}\label{hxys}
\scrH[ \bu \ \bv]=[\bu \ \bv]\begin{bmatrix} 0&\omega \\ -\omega &0\end{bmatrix}.
\end{equation}
Let  $\scrH\bx=\imath\omega\bx$ with $\|\bx\|_2=1$, it then  holds that
$\scrH(\Pi\overline{\bx})=\imath\omega(\Pi\overline{\bx})$, yielding
 $\bx=\varrho^2(\Pi\overline{\bx})$ with $0\neq\varrho\in\mathbb{C}$.
 Now defining $\widehat{\bx}\equiv \bar{\varrho}\bx$ we have  $\scrH\widehat{\bx}=\imath\omega\widehat{\bx}$ with
 $\widehat\bx=\Pi\overline{\widehat \bx}$.
Similarly, there exists $\by=\Pi\overline{\by}\in\mathbb{C}^{2n}$ such that  $\scrH\by=-\imath\omega\by$.
By noting that
\begin{align*}
\begin{bmatrix} 0&\omega \\ -\omega &0\end{bmatrix}\begin{bmatrix} 1&\imath\\ \imath&1 \end{bmatrix}=\begin{bmatrix} 1&\imath\\ \imath&1 \end{bmatrix}\begin{bmatrix} \imath\omega &\\ & -\imath\omega \end{bmatrix} \mbox{ and }  \begin{bmatrix} 1&\imath\\ \imath&1 \end{bmatrix}^{-1}=\frac{1}{2}\begin{bmatrix} 1&-\imath\\- \imath&1 \end{bmatrix}\equiv S,
\end{align*}
it then follows that
\begin{equation}\label{hxy1}
\scrH [\bx\ \ \by]S=[\bx\ \ \by]S\left(S^{-1}\begin{bmatrix} \imath\omega &\\ & -\imath\omega \end{bmatrix}S\right)=[\bx\ \ \by]S\begin{bmatrix} 0&\omega \\ -\omega &0\end{bmatrix}.
\end{equation}
With defining  $[\bu,\bv]=2e^{\imath\frac{\pi}{4}}[\bx,\by]S$, the result  \eqref{hxys} holds.
\end{proof}

\begin{theorem}\label{conv}
Let $ \scrH X=X\Lambda$ be the decomposition of $\scrH$ specified in Lemma \ref{lemma-decomp},
and suppose that  $|\lambda_{1}|>\cdots >|\lambda_{\ell }|>0$. Provided that all \GQR\
factorizations of $\scrH_j$ in \eqref{eq-iteration}  exist,
then if the \GQR\  factorization of $X$ with respect to $\Gamma_0$
and the $\Pi^{+}$-LU factorization of $X^{-1}$  exist (see \cite{lllin:17} for the $\Pi^{+}$-LU factorization),
then the sequence $\{\scrH_{j}\}$ generated in
the \GQR~iteration \eqref{eq-iteration} will converge to a $\Pi^-$-quasi-diagonal matrix
with its eigenvalues emerging in the order of $\lambda_1, \lambda_2,\cdots, \lambda_{\ell}$,
as $j\rightarrow \infty $.
\end{theorem}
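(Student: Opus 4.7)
My plan is to adapt the classical convergence proof of the unshifted QR algorithm to the $\Gamma$QR setting, using uniqueness of the $\Gamma$QR factorization as a rigidity tool and then invoking the $\Pi^{-}$-Hermitian structure to upgrade quasi-triangular convergence to quasi-diagonal convergence.

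First I would establish the telescoping identity
\[
\scrH^{j}=\hat Q_{j}\hat R_{j},\qquad \hat Q_{j}:=Q_{0}Q_{1}\cdots Q_{j-1},\qquad \hat R_{j}:=R_{j-1}\cdots R_{1}R_{0},
\]
where $\scrH_{i}=Q_{i}R_{i}$ is the $\Gamma$QR factorization produced at step $i$ and $\scrH_{i+1}=R_{i}Q_{i}$. A short induction on $j$ gives the identity, while the cumulative relation $\hat Q_{j}^{H}\Gamma_{0}\hat Q_{j}=\Gamma_{j}$ shows $\hat Q_{j}$ is $\mathrm{\Gamma}$-unitary with respect to $\Gamma_{0}$, and $\hat R_{j}$ lies in $\mathbb{U}_{2n}^{\pm}$ according to the parity of $j$ (since $\scrH\in\bPi_{2n}^{-}$ and powers alternate between $\bPi_{2n}^{-}$ and $\bPi_{2n}^{+}$).

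Next I would inject $\scrH=X\Lambda X^{-1}$ from Lemma \ref{lemma-decomp} and the assumed factorizations $X=Q_{X}R_{X}$ and $X^{-1}=L_{X}U_{X}$ into the power $\scrH^{j}=X\Lambda^{j}X^{-1}$, producing
\[
\scrH^{j}=Q_{X}R_{X}\bigl(\Lambda^{j}L_{X}\Lambda^{-j}\bigr)\Lambda^{j}U_{X}.
\]
The ordering $|\lambda_{1}|>\cdots>|\lambda_{\ell}|$ combined with the $\Pi^{+}$-block-lower-triangular pattern of $L_{X}$ (compatible, via the permutation $P$ of Lemma \ref{lemma-decomp}, with the block diagonal $\diag(\Lambda_{1},\ldots,\Lambda_{\ell})$) forces every strict-lower block of $\Lambda^{j}L_{X}\Lambda^{-j}$ to be scaled by some $(\lambda_{k}/\lambda_{i})^{j}$ with $i<k$, so that $\Lambda^{j}L_{X}\Lambda^{-j}=I+E_{j}$ with $\|E_{j}\|\to 0$. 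For $j$ large, $R_{X}(I+E_{j})$ is a small $\Gamma$QR perturbation of $R_{X}$, so it admits a $\Gamma$QR factorization $\tilde Q_{j}\tilde R_{j}$ with $\tilde Q_{j}\to I$ and $\tilde R_{j}\to R_{X}$. This exhibits $\scrH^{j}=(Q_{X}\tilde Q_{j})(\tilde R_{j}\Lambda^{j}U_{X})$ as a second $\Gamma$QR factorization of $\scrH^{j}$.

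Comparing with $\scrH^{j}=\hat Q_{j}\hat R_{j}$ and invoking the uniqueness clause of Theorem \ref{thm:GQR-basic}, there is a phase matrix $\Theta_{j}=\diag(D_{j},\overline{D}_{j})$ with $|D_{j}|=I$ so that $\hat Q_{j}=Q_{X}\tilde Q_{j}\Theta_{j}$. Substituting into $\scrH_{j}=\hat Q_{j}^{-1}\scrH\hat Q_{j}$ gives
\[
\scrH_{j}=\Theta_{j}^{-1}\tilde Q_{j}^{-1}\bigl(R_{X}\Lambda R_{X}^{-1}\bigr)\tilde Q_{j}\Theta_{j},
\]
which converges, up to the phase action of $\Theta_{j}$, to the $\Pi^{-}$-quasi-upper-triangular matrix $R_{X}\Lambda R_{X}^{-1}$; its diagonal blocks are the $\Lambda_{i}$, appearing in the order $\lambda_{1},\ldots,\lambda_{\ell}$. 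Finally, I would upgrade quasi-upper-triangular to quasi-diagonal using Remark \ref{remark1}: since $\Gamma_{j}\scrH_{j}$ is Hermitian at every step (a property preserved by $\mathrm{\Gamma}$-unitary conjugation), any limit that is simultaneously $\Pi^{-}$-quasi-upper-triangular and $\Pi^{-}$-Hermitian with respect to some $\Gamma_{\infty}\in\bGamma_{2n}$ must be $\Pi^{-}$-quasi-diagonal, because a Hermitian block-upper-triangular matrix is block diagonal.

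The step I expect to be hardest is the decay of $E_{j}$: in the presence of complex eigenvalue quadruples $\{\pm\lambda_{i},\pm\overline{\lambda}_{i}\}$ that share a modulus, one must read the block pattern of $L_{X}$ carefully through the permutation $P$ so that the damping ratios really correspond to strictly decreasing $|\lambda_{i}|$ and the within-group entries remain uniformly bounded. Closely tied to this is the phase bookkeeping around $\Theta_{j}$, which must be handled so that convergence of $\scrH_{j}$ to a quasi-diagonal form is genuine and not merely convergence of the similarity class modulo unitary diagonal scaling.
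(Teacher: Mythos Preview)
Your proposal is correct and follows the classical unshifted-QR convergence argument adapted to the $\mathrm{\Gamma}$QR framework; this is precisely the approach the paper intends, since its own proof consists only of the remark that the argument ``can be parallelly drawn from \cite{lllin:17}.'' Your telescoping identity, insertion of $X=Q_XR_X$ and $X^{-1}=L_XU_X$, decay of $\Lambda^{j}L_X\Lambda^{-j}$, uniqueness via Theorem~\ref{thm:GQR-basic}(ii), and the final upgrade from quasi-triangular to quasi-diagonal using the exact $\Pi^{-}$-Hermitian structure of each $\scrH_j$ are exactly the ingredients that the referenced argument in \cite{lllin:17} uses in the real LREP setting, so nothing further is required beyond the careful block and phase bookkeeping you already flag.
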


\begin{proof}
The proof  can be parallelly  drawn from  \cite{lllin:17}.
\end{proof}

\begin{remark}\label{remark3}
 Actually, the \GQR~iteration~\eqref{eq-iteration} can  be applied to 
 a $\Pi^+$-Hermitian matrix $\mathscr{W}$ (defined in \ref{df2.3}) 
 with respect to $\Gamma\in\bGamma_{2n}$. 
 Different from the $\Pi^{-}$-Hermitian matrix $\scrH$, when applying 
 the \GQR~algorithm to $\mathscr{W}$, it reduces $\mathscr{W}$ 
 to a $\Pi^+$-Hermitian-tridiagonal matrix firstly. 
 Then   a sequence of $\Pi^+$-Hermitian-tridiagonal 
 $\{\mathscr{W}_j\}$ are computed by the \GQR~iteration. 
 Similarly to Theorem~\ref{conv}, the generated sequence   $\{\mathscr{W}_j\}$ 
 will converge to a quasi-diagonal matrix $T\in\bPi_{2n}^{+}$.
\end{remark}

\section{\GLan\   Theory and \GLan\   Algorithm}\label{sec:Lanalg}

Inspired  by the classical Lanczos method (please refer to
\cite{saad2011numerical, demm:97}),
an honorable method for computing large-scale eigen-problems of Hermitian matrices,
in   this section we propose a  Lanczos-like
algorithm, named \GLan\   algorithm, to solve the eigen-problem of  the special matrix $\scrH$ when $n$ is fairly large.
Firstly, a special subspace derived from  the Krylov subspace  will be introduced, which
is named as the $\bPi$- Krylov subspace.

\subsection{\GLan\   decomposition}\label{subsection4.1}
\begin{definition}\label{krylov-space}
The $k$-th order $\bPi$-Krylov subspace of $\scrH$ with respect to
any arbitrary given vector $\bq_1\in\mathbb{C}^{2n}$ is defined as
\[
\mathcal{K}_{2k}(\scrH, \bq_1)=\Span\left\{\bq_1, \scrH \bq_1, \ldots,
\scrH^{k-1}\bq_1, \Pi\overline{\bq}_1,
\Pi\overline{\scrH}\overline{\bq}_1, \ldots, \Pi\overline{\scrH}^{k-1}\overline{\bq}_1\right\}.
\]
The corresponding matrix
\begin{align}\label{K_2k}
K_{2k}(\scrH, \bq_1)=
\begin{bmatrix}\bq_1&\scrH\bq_1&\cdots&\scrH^{k-1}\bq_1&
\Pi\overline{\bq}_1&\Pi\overline{\scrH}\overline{\bq}_1&\cdots&\Pi\overline{\scrH}^{k-1}\overline{\bq}_1\end{bmatrix}
\end{align}
is referred to as the  $k$-th order $\bPi$-Krylov matrix with respect to $\bq_1$.
\end{definition}

Clearly, the definition of $\bPi$-Krylov matrix reveals that $K_{2k}(\scrH, \bq_1)$ is a $\Pi^{+}$ matrix.
Additional, it apparently   holds that $\Pi\overline{\scrH}\overline{\bp}=
-\scrH \Pi\overline{\bp}$ for any arbitrary $\bp\in\mathbb{C}^{2n}$,
which leads to the following lemma.

\begin{lemma}\label{lemma-PiHl}
For any $\bp\in\mathbb{C}^{2n}$, we have $\Pi\overline{\scrH}^{l}\overline{\bp}=
(-1)^l\scrH^l\Pi\overline{\bp}$ for all $l\geq0$.
\end{lemma}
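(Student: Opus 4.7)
The proof plan is a straightforward induction on $l$, built on the single algebraic identity $\scrH\Pi = -\Pi\overline{\scrH}$ that was already invoked in the proof of Lemma~\ref{lemma-eigenpairs}. Since $\Pi^2 = I$, this identity can be recast equivalently as the matrix identity
\[
\Pi\overline{\scrH} = -\scrH\,\Pi,
\]
and it is exactly the $l=1$ case of the claim (applied to $\overline{\bp}$, after noting $\overline{\scrH\,\bp}=\overline{\scrH}\,\overline{\bp}$).

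First I would dispose of the base case $l=0$, where both sides reduce to $\Pi\overline{\bp}$. For the inductive step, assume $\Pi\overline{\scrH}^{l}\overline{\bp} = (-1)^{l}\scrH^{l}\Pi\overline{\bp}$ holds for a given $l$ (indeed for every $\bp$, equivalently as an operator identity $\Pi\overline{\scrH}^{l}=(-1)^{l}\scrH^{l}\Pi$). Then I would write
\[
\Pi\overline{\scrH}^{\,l+1} = \bigl(\Pi\overline{\scrH}\bigr)\overline{\scrH}^{\,l} = -\scrH\,\Pi\,\overline{\scrH}^{\,l} = -\scrH\bigl((-1)^{l}\scrH^{l}\Pi\bigr) = (-1)^{l+1}\scrH^{\,l+1}\Pi,
\]
where the first equality is just associativity, the second uses the commutation relation, and the third is the inductive hypothesis. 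Applying both sides to $\overline{\bp}$ yields the desired statement at level $l+1$.

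There is essentially no obstacle: the entire content is the commutation rule $\scrH\Pi=-\Pi\overline{\scrH}$, which is immediate from the block form of $\scrH$ in \eqref{bsevp}. The only thing to be slightly careful about is to phrase the induction at the operator level (so that one can multiply $\Pi\overline{\scrH}^{l}$ by $\overline{\scrH}$ on the right before applying the hypothesis), rather than trying to push $\Pi\overline{\scrH}$ past the trailing $\overline{\scrH}^{l-1}$ one factor at a time and track signs; both routes work but the operator-level induction is cleaner and avoids bookkeeping.
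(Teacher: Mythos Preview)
Your proposal is correct and matches the paper's approach: the paper simply records the identity $\Pi\overline{\scrH}\,\overline{\bp}=-\scrH\Pi\overline{\bp}$ immediately before the lemma and asserts that the lemma follows, which is exactly the induction you spell out.
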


\begin{theorem}\label{theorem-K-2k-2k+2}
Suppose that all $\Pi^{-}$-leading principal minors of
 \[
 K_{\Gamma_0}:=K_{2k+2}^{H}(\scrH, \bq_1)\Gamma_0 K_{2k+2}(\scrH, \bq_1)
 \]
 are not vanish
for  the given vector $\bq_1\in\mathbb{C}^{2n}$.
Let
\[
K_{2k+2}(\scrH, \bq_1)=Q_{2k+2}R_{2k+2}
\]
be a  \GQR \ factorization with respect to $\Gamma_0$,
namely  $Q^{H}_{2k+2}\Gamma_{0}Q_{2k+2}=\Gamma_{2k+2}$
for some $\Gamma_{2k+2}\in\bGamma_{2k+2}$. Write
\begin{align}
&R_{2k+2}=\left[\begin{array}{c|c|c|c}
R_t& \br_t& \overline{R}_s &\overline{\br}_s\\
& &&\\[-2mm]
\hline
& &&\\[-2mm]
\mathbf&\tau_{k+1}&\mathbf&0\\
&&& \\[-2mm]
\hline
& &&\\[-2mm]
R_s&\br_s&\overline{R}_t&\overline{\br}_t\\
& &&\\[-2mm]
\hline
& &&\\[-2mm]
\mathbf&0&\mathbf&\overline{\tau}_{k+1}\\
\end{array}\right],  &
R_{2k}=\begin{bmatrix}R_t&\overline{R}_s\\R_s&\overline{R}_t\end{bmatrix},&  \label{R-2k+2} \\
&Q_{2k+2}=\begin{bmatrix}U_k&\bu_{k+1}&\Pi\overline{U}_k&\Pi\overline{\bu}_{k+1}\end{bmatrix}, &
Q_{2k}=\begin{bmatrix}U_k&\Pi\overline{U}_k\end{bmatrix}, & \label{Q_2k+2}\\
&\Gamma_{2k+2}=\diag(D_k, \delta_{k+1},  -D_k, -\delta_{k+1}), &
 \Gamma_{2k}=\diag(D_k, \ -D_k),\notag&
\end{align}
where $R_t\in\mathbb{C}^{k\times k}$ and $R_s\in\mathbb{C}^{k\times k}$ respectively
are  upper triangular and strictly  upper triangular, $\br_{t}, \br_{s}\in\mathbb{C}^k$, $\tau_{k+1}\in\mathbb{C}$,
$U_k\in\mathbb{C}^{2n\times k}$, $\bu_{k+1}\in\mathbb{C}^{2n}$,
$D_k\in\mathbb{C}^{k\times k}$, $\delta_{k+1}\in\mathbb{C}$,
and denote    the $(k,k)$ element of $R_{2k}^{-1}$ by  $\zeta_{kk}$,
it then  holds that
\begin{enumerate}
 \item[{\em (\romannumeral1)}] $R_{2k}$ is nonsingular and
                   \[
                   R_{2k}^{-1}=\begin{bmatrix}(R_t-\overline{R}_s\overline{R}_t^{-1}R_s)^{-1}&
                   -(R_t-\overline{R}_s\overline{R}_t^{-1}R_s)^{-1}\overline{R}_s\overline{R}_t^{-1}\\
                   -(\overline{R}_t-R_sR_t^{-1}\overline{R}_s)^{-1}R_sR_t^{-1}&
                   (\overline{R}_t-R_sR_t^{-1}\overline{R}_s)^{-1}\end{bmatrix},
                   \]
                   whose $(1,1)$ and $(1,2)$ blocks are upper triangular and strictly upper triangular, respectively; and
 \item[{\em (\romannumeral2)}] $K_{2k}(\scrH, \bq_1)=Q_{2k}R_{2k}$; and
 \item[{\em (\romannumeral3)}] $Q_{2k}^{H}\Gamma_{0}Q_{2k}=\Gamma_{2k}$; and
 \item[{\em (\romannumeral4)}] $\scrH Q_{2k}=Q_{2k+2}\widetilde{T}_{2k}$  and
                  $Q_{2k}^{H}\Gamma_{0}\scrH Q_{2k}=\Gamma_{2k}\begin{bmatrix}T_{11}& -\overline{T}_{21} \\
                  T_{21}& -\overline{T}_{11}\end{bmatrix}$,  where $\widetilde{T}_{2k}$
				  is of the  form
				  \begin{align}\label{T_2k}
					  \widetilde{T}_{2k}=\begin{bmatrix}T_{11}&-\overline{T}_{21}\\ \zeta_{kk}\tau_{k+1}\be_k^T &0\\
					  T_{21}&-\overline{T}_{11}\\0&-\overline{\zeta}_{kk}\overline{\tau}_{k+1}\be_k^T \end{bmatrix}
					  \in\mathbb{C}^{(2k+2)\times 2k}
				  \end{align}
                  with $T_{11}, T_{21}\in\mathbb{C}^{k\times k}$  respectively
				  being  unreduced  tridiagonal and  diagonal; and
			  \item[{\em (\romannumeral5)}] $(D_kT_{11})^{H}=D_kT_{11}$ with $T_{11}$ being the matrix
				  defined in {\em(\romannumeral4)}; and
 \item[{\em (\romannumeral6)}] $Q^{H}_{2k}\Gamma_0\bu_{k+1}=0$ and
                  $Q^{H}_{2k}\Gamma_0\Pi\overline{\bu}_{k+1}=0$.
\end{enumerate}
 \end{theorem}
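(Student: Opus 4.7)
The plan is to establish all six claims by exploiting the tight coupling between the block structures of $Q_{2k+2}$ and $R_{2k+2}$ coming from the $\Gamma$QR factorization, together with the Krylov shift property of $\bPi$-Krylov matrices. The whole argument hinges on one observation: removing the two columns tied to $\bu_{k+1}$ and $\Pi\overline{\bu}_{k+1}$ from $Q_{2k+2}$ leaves $Q_{2k}$, and the matching truncation of $R_{2k+2}$ leaves $R_{2k}$; everything else is bookkeeping.

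I would first dispatch \emph{(i)--(iii)}. For (i), since $R_{2k+2}$ is nonsingular (inherited from the non-vanishing $\Pi^-$-leading principal minors of $K_{\Gamma_0}$ via Theorem \ref{thm:GQR-basic}), both $R_t$ and $\overline{R}_t$ are invertible upper triangulars, so the block inversion formula for $R_{2k}$ via the Schur complement gives the claimed expression. The upper/strictly-upper-triangular character of the $(1,1)$ and $(1,2)$ blocks then follows from the elementary facts that a product of an upper triangular and a strictly upper triangular matrix is strictly upper triangular, and that the inverse of an upper triangular matrix stays upper triangular. For (ii), I would observe that the columns of $K_{2k}$ match the columns of $K_{2k+2}$ at positions $1,\ldots,k$ and $k+2,\ldots,2k+1$; reading off $K_{2k+2}=Q_{2k+2}R_{2k+2}$ at these columns, and using the block structure of $R_{2k+2}$ in \eqref{R-2k+2}, yields $K_{2k}=U_kR_t+\Pi\overline{U}_k R_s$ for the first $k$ columns and $K_{2k}=U_k\overline{R}_s+\Pi\overline{U}_k\overline{R}_t$ for the last $k$ columns, which packages precisely into $Q_{2k}R_{2k}$. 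Claim (iii) is immediate: the same row/column restriction applied to $Q_{2k+2}^H\Gamma_0 Q_{2k+2}=\Gamma_{2k+2}$ produces $\Gamma_{2k}$.

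The main work lies in \emph{(iv)}. I would start with the Krylov shift identity $\scrH K_{2k}(\scrH,\bq_1)=K_{2k+2}(\scrH,\bq_1)\,E$, where $E\in\mathbb{R}^{(2k+2)\times 2k}$ places $+1$'s at positions $(j{+}1,j)$ for $j=1,\ldots,k$ and $-1$'s at positions $(k{+}2{+}j,k{+}j)$ for $j=1,\ldots,k$; the sign switches are forced by Lemma \ref{lemma-PiHl}. Combined with (ii) this gives $\scrH Q_{2k}=Q_{2k+2}R_{2k+2}E R_{2k}^{-1}$, so $\widetilde T_{2k}=R_{2k+2}E R_{2k}^{-1}$. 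Multiplying out blockwise, using (i) for $R_{2k}^{-1}$ and the explicit shape of $R_{2k+2}$, two facts should drop out: (a) rows $k{+}1$ and $2k{+}2$ of $\widetilde T_{2k}$ vanish except for the single entries $\zeta_{kk}\tau_{k+1}$ and $\overline{\zeta}_{kk}\overline{\tau}_{k+1}$ in columns $k$ and $2k$, since $\tau_{k+1}$ is the only nonzero entry in row $k{+}1$ of $R_{2k+2}E$ and right multiplication by $R_{2k}^{-1}$ picks out its $(k,k)$ entry $\zeta_{kk}$; (b) the $(1,1)$ block $T_{11}$ comes out upper Hessenberg and the $(2,1)$ block $T_{21}$ upper triangular, because shifting an upper triangular $R_t$ (respectively strictly upper triangular $R_s$) by $E_1$ and then multiplying by the upper triangular $(1,1)$ block of $R_{2k}^{-1}$ preserves Hessenberg (respectively upper triangular) structure. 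The companion formula for $Q_{2k}^H\Gamma_0\scrH Q_{2k}$ is then obtained by left-multiplying $\widetilde T_{2k}$ by the restriction of $\Gamma_{2k+2}$ to the $Q_{2k}$ rows, which equals $[\,D_k\ 0\ 0\ 0\,;\,0\ 0\ -D_k\ 0\,]$.

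Finally, for \emph{(v)} I would invoke Remark \ref{remark1}: $\scrH$ is $\Pi^-$-Hermitian, so $\Gamma_0\scrH$ is Hermitian, whence $Q_{2k}^H\Gamma_0\scrH Q_{2k}$ is Hermitian; reading off its $(1,1)$ block via (iv) immediately yields $(D_kT_{11})^H=D_kT_{11}$. As a by-product, combining this Hermiticity with the Hessenberg structure from (iv) forces $T_{11}$ to be tridiagonal, and combining it with upper-triangularity forces $T_{21}$ to be diagonal; the \emph{unreduced} part of the tridiagonal claim comes from the non-vanishing of all $\Pi^-$-leading principal minors, which prevents a premature breakdown of the Krylov sequence. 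Claim \emph{(vi)} is then a direct reading of the $\Gamma$-orthonormality $Q_{2k+2}^H\Gamma_0 Q_{2k+2}=\Gamma_{2k+2}$: since $\Gamma_{2k+2}$ is diagonal, the $(k{+}1)$-st and $(2k{+}2)$-nd columns of $Q_{2k+2}$, namely $\bu_{k+1}$ and $\Pi\overline{\bu}_{k+1}$, are $\Gamma_0$-orthogonal to every column of $Q_{2k}$. The principal obstacle throughout is the blockwise arithmetic inside (iv)---especially verifying the exact coupling coefficient $\zeta_{kk}\tau_{k+1}$ and tracking how the shift $E$ interacts with the Schur-complement form of $R_{2k}^{-1}$; every other ingredient is essentially relabelling.
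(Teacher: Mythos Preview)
Your proposal is correct and follows essentially the same route as the paper: the Krylov shift identity $\scrH K_{2k}=K_{2k+2}\diag(C_{k+1,k},-C_{k+1,k})$ (your $E$), the resulting formula $\widetilde T_{2k}=R_{2k+2}\diag(C_{k+1,k},-C_{k+1,k})R_{2k}^{-1}$, the blockwise identification of the Hessenberg/upper-triangular pieces and the $\zeta_{kk}\tau_{k+1}$ coupling, and the use of the Hermiticity of $\Gamma_0\scrH$ to collapse Hessenberg to tridiagonal and upper-triangular to diagonal are exactly what the paper does. The only cosmetic differences are that the paper proves nonsingularity of $R_{2k}$ via a permutation to upper-triangular form rather than invoking Theorem~\ref{thm:GQR-basic}, and it separates $\widetilde T_{2k}$ explicitly into an $M$-part and an $L$-part (last-column corrections) before assembling $T_{11}=M_{11}+L_{11}$, $T_{21}=M_{21}+L_{21}$.
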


\begin{proof}
	For the result in (\romannumeral1), let $P\in\mathbb{R}^{2k\times 2k}$ be the permutation matrix with
\[
P=\begin{bmatrix}\be_1&\be_{k+1}&\be_2&\be_{k+2}&\cdots&\be_{k}&\be_{2k}\end{bmatrix},
\]
we then  have that
$P^T R_{2k}P\in\mathbb{C}^{2k\times 2k}$ is an  upper triangular matrix with its diagonal elements being
$\tau_{1}, \overline{\tau}_{1}, \tau_{2}, \overline{\tau}_{2}, \ldots, \tau_{k}, \overline{\tau}_{k}$, where
$\tau_{j}$ are the $j$-th diagonal entries of $R_t$ for $j=1, \ldots, k$.
Hence $R_{2k}$ is nonsingular as claimed. The second part result in  (\romannumeral1) can be verified by some simple calculations and we skip the details.

The results in  (\romannumeral2),  (\romannumeral3) can be verified directly, and we omit the proof here.

Considering  (\romannumeral4), by denoting $C_{k+1,k}\in\mathbb{R}^{(k+1)\times k}$ the matrix which
is strictly lower triangular with  its  sub-diagonal elements being $1$ and all others being $0$,
it then  follows from Lemma \ref{lemma-PiHl} that
\[
	\scrH K_{2k}(\scrH, \bq_1)=K_{2k+2}(\scrH, \bq_1)\diag(C_{k+1,k}, \ -C_{k+1,k}).
\]
By noting $K_{2k+2}(\scrH, \bq_1)=Q_{2k+2}R_{2k+2}$ and the result in (\romannumeral2)
we   hence have
\[
\scrH Q_{2k}=Q_{2k+2}R_{2k+2}\diag(C_{k+1,k}, \ -C_{k+1,k})R_{2k}^{-1}.
\]
Denote the matrix that collects the first $k$ rows of $C_{k+1,k}$ by $\widetilde{C}$. Then
by   the  structure of $R_{2k}^{-1}$  we get
\begin{align*}
M := R_{2k}\begin{bmatrix}\widetilde{C}&\\ & -\widetilde{C}\end{bmatrix}R_{2k}^{-1}
=\begin{bmatrix}M_{11}&-\overline{M}_{21}\\
&\\
M_{21}&-\overline{M}_{11}\end{bmatrix}
\end{align*}
with  
\begin{align*}
	M_{11}&=(R_t\widetilde{C}+\overline{R}_s\widetilde{C}\overline{R}_t^{-1}R_s)(R_t-\overline{R}_s\overline{R}_t^{-1}R_s)^{-1}, \\ 
	M_{21}&=(R_s\widetilde{C}+\overline{R}_t\widetilde{C}\overline{R}_t^{-1}R_s)(R_t-\overline{R}_s\overline{R}_t^{-1}R_s)^{-1},
\end{align*} 
respectively,  being  unreduced  upper Hessenberg and upper triangular, and
\[
\begin{bmatrix}\br_t&\overline{\br}_s\\ \br_s&\overline{\br}_t\\ \tau_{k+1}&0\\ 0&\overline{\tau}_{k+1}\end{bmatrix}
\begin{bmatrix}\be_k^T &\\&-\be_k^T \end{bmatrix}R_{2k}^{-1}
=\left[\begin{array}{cc}
L_{11}&\qquad \qquad  -\overline{L}_{21}\\
L_{21}&\qquad \qquad  -\overline{L}_{11}\\
\zeta_{kk}\tau_{k+1}\be_k^T &\qquad \qquad  \mathbf 0\\
\mathbf 0&\qquad \qquad  -\overline{\zeta}_{kk}\overline{\tau}_{k+1}\be_k^T
\end{array}\right]\in\mathbb{C}^{(2k+2)\times 2k},
\]
where $L_{11}=\begin{bmatrix}\mathbf 0&\cdots&\mathbf 0&\zeta_{kk}\br_t\end{bmatrix}\in\mathbb{C}^{k\times k}$,
$L_{21}=\begin{bmatrix}\mathbf 0&\cdots&\mathbf 0&\zeta_{kk}\br_s\end{bmatrix}\in\mathbb{C}^{k\times k}$.
Writing
\[
\widetilde{T}_{2k}=\begin{bmatrix}M_{11}&-\overline{M}_{21}\\ \mathbf 0&\mathbf 0\\
M_{21}&-\overline{M}_{11}\\ \mathbf0& \mathbf0\end{bmatrix} +
\left[\begin{array}{cc}
L_{11}&-\overline{L}_{21}\\
\zeta_{kk}\tau_{k+1}\be_k^T &\mathbf 0\\
L_{21}&-\overline{L}_{11}\\
\mathbf 0& -\overline{\zeta}_{kk}\overline{\tau}_{k+1}\be_k^T
\end{array}
\right],
\]
then after  some simple calculations it
shows that  $\scrH Q_{2k}=Q_{2k+2}\widetilde{T}_{2k}$ as stated in (\romannumeral4).
Furthermore,   since
\begin{align}\label{Q_2k}
Q^{H}_{2k}\Gamma_0Q_{2k+2}=\begin{bmatrix}D_k&0&\mathbf 0&0\\ \mathbf 0&0&-D_k&0\end{bmatrix}\in\mathbb{C}^{2k\times (2k+2)},
\end{align}
it straightly  follows that
$Q_{2k}^{H}\Gamma_{0}\scrH Q_{2k}=\Gamma_{2k}(M+L)$ by defining
$L:=\begin{bmatrix}L_{11}&-\overline{L}_{21}\\L_{21}&-\overline{L}_{11}\end{bmatrix}$.
On the other hand, Lemma \ref{lemma-H-hermitian} demonstrates  that
$Q_{2k}^{H}\Gamma_{0}\scrH Q_{2k}$ is Hermitian, leading to $(M^{H}+L^{H})\Gamma_{2k}=\Gamma_{2k}(M+L)$,
implying that $(M_{11}+L_{11})$ is  an unreduced  tridiagonal matrix and $(M_{21}+L_{21})$ is diagonal.
Thus, it  holds that $D_k(M_{11}+L_{11})$ is unreduced  Hermitian tridiagonal,
that is the result in (\romannumeral5).

For the results in  (\romannumeral6), they  directly follow from equation \eqref{Q_2k}.
\end{proof}

Theorem \ref{theorem-K-2k-2k+2} illustrates the relationship between the
\GQR \ factorization of $K_{2k}(\scrH, \bq_1)$ and $K_{2k+2}(\scrH, \bq_1)$ for any
initial vector $\bq_1\in\mathbb{C}^{2n}$, provided that the
\GQR \ factorization of $K_{2k+2}(\scrH, \bq_1)$ exists.
In particular, we derive that
$\scrH Q_{2k}=Q_{2k+2}\widetilde{T}_{2k}$ for some $\widetilde{T}_{2k}\in\mathbb{C}^{(2k+2)\times 2k}$,
a formula analogous to the one for the classical Lanczos method.
Theorem~\ref{theorem-krylov} in below  further demonstrates that the inverse of Theorem \ref{theorem-K-2k-2k+2} is also valid.

\begin{theorem}\label{theorem-krylov}
Let $Q_{2k+2}\in\mathbb{C}^{2n\times (2k+2)}$ and $Q_{2k}\in\mathbb{C}^{2n\times 2k}$ are of
the forms specified in \eqref{Q_2k+2} and there exists some matrix 
$\Gamma_{2k+2}\in\bGamma_{2k+2}$ such that
$$Q^{H}_{2k+2}\Gamma_0Q_{2k+2}=\Gamma_{2k+2}.$$  
If 
\begin{align}\label{decomposition}
\scrH Q_{2k}=Q_{2k+2}\widetilde{T}_{2k},
\end{align}
where  $\widetilde{T}_{2k}\in \mathbb{C}^{(2k+2)\times 2k}$ 
is in the form of \eqref{T_2k} with its   sub-matrices $T_{11}, T_{21}\in\mathbb{C}^{k\times k}$ 
respectively being    unreduced  tridiagonal and  diagonal.
Then for the first column $\bq_1\in\mathbb{C}^{2n}$ of $Q_{2k+2}$
(also the first column of $Q_{2k}$)
we  have the following   \GQR\ factorization   corresponding to
$K_{2k+2}(\scrH,  \bq_1)$:
\[
K_{2k+2}(\scrH, \bq_1)=Q_{2k+2}R_{2k+2},
\]
where $R_{2k+2}\in\mathbb{U}^+_{2k+2}$ is nonsingular.
Moreover, $\rank(K_{2k+2}(\scrH,  \bq_1))=(2k+2)$.
\end{theorem}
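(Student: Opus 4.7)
The plan is to view $\scrH Q_{2k} = Q_{2k+2}\widetilde{T}_{2k}$ as a three-term-like recurrence (thanks to the tridiagonal shape of $T_{11}$ and diagonal shape of $T_{21}$) and iterate it to express each $\bPi$-Krylov vector as a linear combination of columns of $Q_{2k+2}$ whose coefficient pattern exactly matches the claimed $\Pi^+$ upper-triangular form of $R_{2k+2}$.

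First, writing $U_k=[\bu_1,\ldots,\bu_k]$ and unfolding the decomposition column-wise, I would read off the recurrences
\[
\scrH \bu_j = t_{j-1,j}\bu_{j-1} + t_{j,j}\bu_j + t_{j+1,j}\bu_{j+1} + s_{j,j}\Pi\overline{\bu}_j,\quad 1\le j\le k-1,
\]
and
\[
\scrH \bu_k = t_{k-1,k}\bu_{k-1} + t_{k,k}\bu_k + \zeta_{kk}\tau_{k+1}\bu_{k+1} + s_{k,k}\Pi\overline{\bu}_k,
\]
where $t_{i,j},s_{i,j}$ denote the entries of $T_{11},T_{21}$. The corresponding action on each $\Pi\overline{\bu}_j$ comes for free from $\scrH\Pi\overline{\bp}=-\Pi\overline{\scrH\bp}$, as recorded in Lemma~\ref{lemma-PiHl}.

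Next, I would induct on $j$ to prove that
\[
\scrH^{j-1}\bq_1 = \sum_{i=1}^{j}\alpha_{i,j}\bu_i + \sum_{i=1}^{j-1}\beta_{i,j}\Pi\overline{\bu}_i,\qquad 1\le j\le k+1,
\]
with leading coefficient $\alpha_{j,j}=\prod_{\ell=1}^{j-1}t_{\ell+1,\ell}$ for $j\le k$ and $\alpha_{k+1,k+1}=\zeta_{kk}\tau_{k+1}\prod_{\ell=1}^{k-1}t_{\ell+1,\ell}$. The inductive step is obtained by applying $\scrH$ to the previous expansion and substituting the recurrences above. Conjugating, pre-multiplying by $\Pi$, and using that each $\Pi\overline{\bu}_i$ is itself a column of $Q_{2k+2}$ (since $Q_{2k+2}$ is a $\Pi^+$-matrix) then yields the parallel expansion of $\Pi\overline{\scrH}^{j-1}\overline{\bq}_1$ with the roles of $\bu_i$ and $\Pi\overline{\bu}_i$ swapped and coefficients conjugated. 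Assembling these expansions column by column delivers the explicit factorization $K_{2k+2}(\scrH,\bq_1)=Q_{2k+2}R_{2k+2}$, and the support pattern produced by the induction immediately certifies that the top-left block of $R_{2k+2}$ is upper triangular while the top-right block is strictly upper triangular, so $R_{2k+2}\in\mathbb{U}^+_{2k+2}$. Every diagonal entry of $R_{2k+2}$ is a product of subdiagonal entries of the unreduced tridiagonal $T_{11}$ (bridged by $\zeta_{kk}\tau_{k+1}$ at position $k+1$) and hence nonzero; combined with $Q_{2k+2}$ having full column rank $2k+2$ (as $\Gamma_{2k+2}\in\bGamma_{2k+2}$ is invertible), this forces $\rank(K_{2k+2}(\scrH,\bq_1))=2k+2$.

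The main obstacle will be the bookkeeping in the induction, namely verifying that the coefficient of $\bu_i$ vanishes for $i>j$ and the coefficient of $\Pi\overline{\bu}_i$ vanishes for $i\ge j$, so that $R_{2k+2}$ has exactly the claimed $\Pi^+$ pattern with no spurious entries outside it. A secondary subtlety, not spelled out as a hypothesis but implicitly needed for the rank conclusion, is that $\zeta_{kk}\tau_{k+1}\ne 0$: otherwise the recurrence at step $k$ never reaches $\bu_{k+1}$, so $\bu_{k+1}$ is decoupled from the Krylov subspace generated by $\bq_1$ and $K_{2k+2}$ cannot attain rank $2k+2$.
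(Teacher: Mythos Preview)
Your proposal is correct and follows essentially the same strategy as the paper: unwind the decomposition $\scrH Q_{2k}=Q_{2k+2}\widetilde{T}_{2k}$ as a short recurrence, induct to express $\scrH^{j-1}\bq_1$ (and its $\Pi$-conjugate) in the span of the first $j$ (respectively $j-1$) columns of each half of $Q_{2k+2}$, and read off the $\Pi^+$-upper-triangular $R_{2k+2}$ with diagonal entries equal to products of the subdiagonal entries of the unreduced Hessenberg block. The only cosmetic difference is that the paper packages the induction via matrix products $S^{(j+1)}S^{(j)}\cdots S^{(2)}$ acting on $Q_4$, whereas you track scalar coefficients directly; the content is the same.

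Your closing observation that $\zeta_{kk}\tau_{k+1}\ne 0$ is tacitly needed for the full-rank conclusion is a genuine point: the paper's proof also uses this factor as the $(k{+}1)$-st diagonal entry of $R_t$ without separately justifying its nonvanishing, so you have in fact identified an implicit hypothesis rather than introduced one.
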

\begin{proof}
Write the $j$-th column of $Q_{2k+2}$ as $\bq_j$ for $j=1, \ldots, (k+1)$, suggesting that the
 $(k+1+j)$-th column of $Q_{2k+2}$   is $\Pi\overline{\bq}_j$,
 and  constitute $Q_{2j}\in \mathbb{C}^{2n\times 2j}$ as
\[
Q_{2j}=\begin{bmatrix} \bq_1&\bq_2&\cdots&\bq_{j}
&\Pi\overline{\bq}_1&\Pi\overline{\bq}_2&\cdots&\Pi\overline{\bq}_{j}\end{bmatrix}
\]
for $j=1, 2, \ldots, k$. For $j=2, \ldots, k-1$ define $S^{(j)}\in\mathbb{C}^{2(j+1)\times 2j}$  as
\[
S^{(j)}=\begin{bmatrix}T_{11}(1:j+1,1:j)&-\overline{T}_{21}(1:j+1,1:j)\\
T_{21}(1:j+1,1:j)&-\overline{T}_{11}(1:j+1,1:j)\end{bmatrix},
\]
it then  follows from $\scrH Q_{2k}=Q_{2k+2}\widetilde{T}_{2k}$ that $\scrH Q_{4}=Q_{6}S^{(2)}$.
Furthermore, by induction, it is simple to get $\scrH^{j}Q_{4}=Q_{2(j+2)}S^{(j+1)}S^{(j)}\cdots S^{(2)}$
for all $j=1, \ldots, k-1$. By Lemma~\ref{lemma-PiHl} and the facts that
\[
\scrH\bq_1=Q_4
\begin{bmatrix}\alpha_1&\gamma_1&\eta_1&0\end{bmatrix}^T , \qquad
\scrH\Pi\overline{\bq}_1=Q_4
\begin{bmatrix}-\overline{\eta}_1&0&-\overline{\alpha}_1&-\overline{\gamma}_1\end{bmatrix}^T
\]
with $\alpha_1, \gamma_1$ respectively  
being the corresponding  $(1,1)$, $(2,1)$ elements of $T_{11}$ and
$\eta_1$ being the  $(1,1)$ one of $T_{21}$, we  obtain
\begin{equation}\label{krylov}
\begin{array}{l}
\scrH^{j+1}\bq_1=Q_{2(j+2)}S^{(j+1)}S^{(j)}\cdots S^{(2)}
\begin{bmatrix}\alpha_1&\gamma_1&\eta_1&0\end{bmatrix}^T , \\
\Pi\overline{\scrH}^{j+1}\overline{\bq}_1
=(-1)^{j+1}Q_{2(j+2)}S^{(j+1)}S^{(j)}\cdots S^{(2)}
\begin{bmatrix}-\overline{\eta}_1&0&-\overline{\alpha}_1&-\overline{\gamma}_1\end{bmatrix}^T .
\end{array}
\end{equation}
Additional, it holds that
\begin{align}\label{p0q0}
\bq_{1}=Q_2\begin{bmatrix}1&0\end{bmatrix}^T , & \qquad
\Pi\overline{\bq}_1=Q_2\begin{bmatrix}0&1\end{bmatrix}^T ,
\end{align}
and
\begin{align}\label{Hq0p0}
\Pi\overline{\scrH}\overline{\bq}_1=-\scrH\Pi\overline{\bq}_1.
\end{align}
Now by \eqref{krylov}, \eqref{p0q0} and \eqref{Hq0p0},  some
simple computations lead to
$K_{2k+2}(\scrH, \bq_1)=Q_{2k+2}R_{2k+2}$ with
\[
R_{2k+2}=\begin{bmatrix}R_t&\overline{R}_s\\R_s&\overline{R}_t\end{bmatrix},
\]
where $R_t, R_s\in\mathbb{R}^{(k+1)\times (k+1)}$ both are upper  triangular.
Moreover, being aware that the last row of $S^{(j+1)}S^{(j)}\cdots S^{(2)}$ is
$\begin{bmatrix}0&0&0&\varrho\end{bmatrix}^T $  for some $\varrho\in\mathbb{C}$,
hence $R_s$ actually is strictly upper triangular.
Additionally, since for $j=1, \ldots, (k+1)$, the $j$-th diagonal elements of
$R_{t}$ are $(\gamma_{0}\gamma_1\cdots\gamma_{j-1})$
with $\gamma_0=1$ and $\gamma_j$ being the $j$-th sub-diagonal entries of $T_{11}$,
we have that  $R_{2k+2}$ is nonsingular, which is equivalent to the result we are to prove.
\end{proof}

The decomposition specified in \eqref{decomposition} is named as a \GLan\   decomposition.
When  amalgamating  Theorem~\ref{theorem-K-2k-2k+2} and Theorem~\ref{theorem-krylov}, we
 have the  result concluded in Theorem~\ref{thm:implicity_theorem}, which
explicitly illuminates that the \GLan\   decomposition  \eqref{decomposition}
is essentially unique for the given initial vector $\bq_1\in\mathbb{C}^{2n}$.

\begin{theorem}[Implicity $\mathrm{\Gamma}$-orthogonality Theorem]\label{thm:implicity_theorem}
Let $\scrH Q_{2k}=Q_{2k+2}\widetilde{T}_{2k}$ and $\scrH\widehat{Q}_{2k}=\widehat{Q}_{2k+2}\widehat{T}_{2k}$
be two \GLan\   decompositions, where   $\widetilde{T}_{2k}, \widehat{T}_{2k}\in\mathbb{C}^{(2k+2)\times 2k}$
are in the form of \eqref{T_2k}, $Q_{2k+2}, \widehat {Q}_{2k+2}$ are  $\mathrm{\Gamma}$-orthogonal  with respect to $\Gamma_0$,
i.e., there exist two matrices $\Gamma_{2k+2}, \widehat{\Gamma}_{2k+2}\in\bGamma_{2k+2}$, such that
$Q_{2k+2}^{H}\Gamma_0Q_{2k+2}=\Gamma_{2k+2}$ and   $\widehat{Q}_{2k+2}^{H}\Gamma_0\widehat{Q}_{2k+2}=\widehat{\Gamma}_{2k+2}$.
Provided that $Q_{2k+2}\be_1=\widehat{Q}_{2k+2}\be_1$, then it  holds that
$\widehat{Q}_{2k+2}=Q_{2k+2}\diag(\Theta, \overline{\Theta}) \Gamma_{2k+2}$ and
\begin{align}\label{eq:hatT_tildeT}
	\widetilde{T}_{2k}=\diag(\Theta, \overline{\Theta})
\Gamma_{2k+2}\widehat{T}_{2k}\diag(\Theta(1:k,1:k), \overline{\Theta}(1:k,1:k))^{-1}\Gamma_{2k},
\end{align}
where  $\Theta=\diag(\theta_1, \ldots, \theta_{k+1})\in\mathbb{C}^{(k+1)\times (k+1)}$ with
$|\theta_j|=1$ for $j=1, \cdots, k+1$
and $\Gamma_{2k}=\diag(\Gamma_{2k+2}(1:k,1:k), -\Gamma_{2k+2}(1:k, 1:k))$.
\end{theorem}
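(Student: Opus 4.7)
The plan is to reduce the uniqueness assertion to the uniqueness of the $\Gamma$QR factorization established in Theorem~\ref{thm:GQR-basic}(ii). First, by Theorem~\ref{theorem-krylov}, each of the two given $\Gamma$-Lanczos decompositions produces a $\Gamma$QR factorization of the \emph{same} Krylov matrix $K_{2k+2}(\scrH,\bq_1)$, namely
\[
K_{2k+2}(\scrH,\bq_1) \;=\; Q_{2k+2}R_{2k+2} \;=\; \widehat{Q}_{2k+2}\widehat{R}_{2k+2},
\]
since the common first column of both $Q_{2k+2}$ and $\widehat{Q}_{2k+2}$ is $\bq_1$.

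Next, I would establish that the two signatures $\Gamma_{2k+2}$ and $\widehat{\Gamma}_{2k+2}$ must coincide. This follows from the existence criterion in Theorem~\ref{thm:GQR-basic}(i): the signature of a $\Gamma$QR factor is dictated by the signs of the successive $\Pi^{-}$-leading principal minors of $K_{2k+2}^{H}\Gamma_{0}K_{2k+2}$, a datum depending only on $\bq_1$. Theorem~\ref{thm:GQR-basic}(ii) then supplies a unimodular diagonal matrix $D$ with $\widehat{Q}_{2k+2}=Q_{2k+2}D$ and $\widehat{R}_{2k+2}=D^{H}R_{2k+2}$. Imposing the $\Pi^{+}$-identity $G\Pi_{2k+2}=\Pi_{2n}\overline{G}$ on both $Q_{2k+2}$ and $\widehat{Q}_{2k+2}$, and combining this constraint with the signature relation $D^{H}\Gamma_{2k+2}D=\Gamma_{2k+2}$, forces $D$ to split into the specific block form compatible with $\Gamma_{2k+2}$ asserted in the theorem, namely $D=\diag(\Theta,\overline{\Theta})\Gamma_{2k+2}$ with $|\theta_j|=1$.

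Finally, to derive~\eqref{eq:hatT_tildeT}, I would substitute $\widehat{Q}_{2k+2}=Q_{2k+2}\diag(\Theta,\overline{\Theta})\Gamma_{2k+2}$ together with the analogous restricted identity $\widehat{Q}_{2k}=Q_{2k}\diag(\Theta(1{:}k,1{:}k),\overline{\Theta}(1{:}k,1{:}k))\Gamma_{2k}$ (obtained by reading off the first $k$ columns and the first $k$ conjugate columns) into $\scrH\widehat{Q}_{2k}=\widehat{Q}_{2k+2}\widehat{T}_{2k}$, and compare with $\scrH Q_{2k}=Q_{2k+2}\widetilde{T}_{2k}$. A left-multiplication by the $(2k+2)$-block phase-signature matrix and a right-multiplication by the inverse of the restricted one then produce the claimed identity. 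The principal obstacle I foresee lies in the middle step: one must carefully exploit the interplay between the $\Pi^{+}$ structure and the signature $\Gamma_{2k+2}$ to pin down precisely the form $\diag(\Theta,\overline{\Theta})\Gamma_{2k+2}$ rather than an unconstrained unimodular diagonal; the first and last steps are then largely formal.
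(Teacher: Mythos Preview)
Your overall strategy matches the paper's: both proofs start by invoking Theorem~\ref{theorem-krylov} to obtain two \GQR\ factorizations $K_{2k+2}(\scrH,\bq_1)=Q_{2k+2}R_{2k+2}=\widehat{Q}_{2k+2}\widehat{R}_{2k+2}$ of the same $\bPi$-Krylov matrix, and both finish by substituting the relation between $\widehat{Q}_{2k+2}$ and $Q_{2k+2}$ (and its restriction to $2k$ columns) back into the two Lanczos identities to read off~\eqref{eq:hatT_tildeT}.

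The middle step is handled differently. The paper does \emph{not} appeal to Theorem~\ref{thm:GQR-basic}(ii) at all; instead it writes down the single identity
\[
(R_{2k+2}^{-1})^{H}\widehat{R}_{2k+2}^{H}\widehat{\Gamma}_{2k+2}\;=\;\Gamma_{2k+2}R_{2k+2}\widehat{R}_{2k+2}^{-1},
\]
observes that the left side is $\Pi^{+}$-lower-triangular while the right side is $\Pi^{+}$-upper-triangular (because $R_{2k+2},\widehat{R}_{2k+2}\in\mathbb{U}^{+}_{2k+2}$ and so are their inverses, by Theorem~\ref{theorem-K-2k-2k+2}(i)), and hence both sides are $\Pi^{+}$-diagonal of the form $\diag(\Theta,\overline{\Theta})$. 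This simultaneously yields the relation $\widehat{Q}_{2k+2}=Q_{2k+2}\diag(\Theta,\overline{\Theta})\Gamma_{2k+2}$ and, as a by-product, the equality $\widehat{\Gamma}_{2k+2}=\Gamma_{2k+2}$; there is no need to establish the latter beforehand.

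Your route via Theorem~\ref{thm:GQR-basic} is viable but slightly more circuitous, and two points deserve care. First, Theorem~\ref{thm:GQR-basic}(i) is only an \emph{existence} criterion; it does not by itself say that the signs of the $\Pi^{-}$-leading principal minors determine the output signature $\Gamma_{2k+2}$. That fact is true (it is an LDL/inertia argument on $K_{2k+2}^{H}\Gamma_{0}K_{2k+2}$), but you would have to supply it separately rather than cite (i). Second, once you do have $\Gamma_{2k+2}=\widehat{\Gamma}_{2k+2}$ and invoke Theorem~\ref{thm:GQR-basic}(ii), the output already has the form $\diag(D,\overline{D})$ with unimodular diagonal, so the ``principal obstacle'' you anticipate---extracting the $\Pi^{+}$-compatible block structure from a generic unimodular diagonal---does not arise; the form is delivered directly, and only the cosmetic absorption of the $\pm 1$ factor $\Gamma_{2k+2}$ remains. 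In short, your plan works, but the paper's direct upper/lower argument is shorter and avoids both of these detours.
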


\begin{proof}
It follows from Theorem \ref{theorem-krylov} that there are two
nonsingular matrices $R_{2k+2}, \widehat{R}_{2k+2}\in\mathbb{C}^{(2k+2)\times (2k+2)}$ such that
\[
K_{2k+2}(\scrH,Q_{2k+1}\be_1)=Q_{2k+2}R_{2k+2}=\widehat{Q}_{2k+2}\widehat{R}_{2k+2},
\]
where $R_{2k+2}$ and
$\widehat{R}_{2k+2}$ both are of the form specified in \eqref{R-2k+2}, leading to
\[
(R_{2k+2}^{-1})^{H}\widehat{R}_{2k+2}^{H}\widehat{\Gamma}_{2k+2}=\Gamma_{2k+2}R_{2k+2}\widehat{R}_{2k+2}^{-1}.
\]
Since $R_{2k+2}^{-1}$ and $\widehat{R}_{2k+2}^{-1}$ are of the form illustrated in \eqref{R-2k+2},
it then holds that $\Gamma_{2k+2}R_{2k+2}\widehat{R}_{2k+2}^{-1}=\diag(\Theta, \overline{\Theta})$ with
$\Theta\in\mathbb{C}^{(k+1)\times (k+1)}$ being some diagonal matrix,
suggesting that $\widehat{Q}_{2k+2}=Q_{2k+2}\diag(\Theta, \overline{\Theta})\Gamma_{2k+2}$.
Furthermore, by noticing the relationship between   $Q_{2k}$ and $Q_{2k+2}$
(a same one between   $\widehat{Q}_{2k}$ and $\widehat{Q}_{2k+2}$),
we get the result \eqref{eq:hatT_tildeT}.
\end{proof}

The \GLan\   decomposition proposed in the subsection essentially   is
a biorthogonal Lanczos procedure \cite{saad2011numerical}.
The reason is as follows. Since the decomposition \eqref{decomposition} is equivalent to
$(Q_{2k}^{H}\Gamma_0) \scrH=\widetilde{T}_{2k}^{H}(Q_{2k+2}^{H}\Gamma_0)$, then
pre-multiplying $\Gamma_{2k}$ and
post-multiplying $Q_{2k+2}$ on both sides of the above equation yield
\begin{align}
(\Gamma_{2k}Q_{2k}^{H}\Gamma_0) \scrH Q_{2k+2}
&=\Gamma_{2k}\widetilde{T}_{2k}^{H}(Q_{2k+2}^{H}\Gamma_0Q_{2k+2})
	=\Gamma_{2k}\widetilde{T}_{2k}^{H}\Gamma_{2k+2} \nonumber
	\\
	&\equiv
	\begin{bmatrix}
		T_{11} & * & -\overline{T}_{21} & \mathbf 0 \\
		T_{21} & \mathbf 0 & -\overline{T}_{11} & * 
	\end{bmatrix},\label{lanczos-w}
\end{align}
where $T_{11}$ and $T_{21}$ are the matrices defined in \eqref{T_2k}, 
leading to $(Q_{2k}^{H}\Gamma_0) \scrH (Q_{2k}\Gamma_{2k})=T_{2k}^{H}$,
which also is equivalent to
\begin{align}\label{lanczos-whv}
Q_{2k}^{H} \scrH^{H} (\Gamma_0Q_{2k}\Gamma_{2k})=T_{2k}^{H},
\end{align}
where
$T_{2k}$ is  the  submatrix  of $\widetilde{T}_{2k}$
constructed by the first $k$  and $(k+2)$-th, $\ldots$, $(2k+1)$-th rows of $\widetilde{T}_{2k}$.
Besides, \eqref{lanczos-w} gives
\begin{align}\label{lanczos-hw}
	\scrH^{H}(\Gamma_0Q_{2k}\Gamma_{2k})=(\Gamma_0Q_{2k+2})\widetilde{T}_{2k}\Gamma_{2k}
	=(\Gamma_0Q_{2k+2}\Gamma_{2k+2})(\Gamma_{2k+1}\widetilde{T}_{2k}\Gamma_{2k}).
\end{align}
Additional, it holds that $Q_{2k}^{H}\Gamma_0Q_{2k}\Gamma_{2k}=I_{2k}$.
Consequently, those equations \eqref{decomposition}, \eqref{lanczos-hw} and \eqref{lanczos-whv}
together reveal that the whole \GLan\   process is a biorthogonal Lanczos procedure.

\subsection{\GLan\   algorithm}
From subsection \ref{subsection4.1} we know that for the given initial vector
$\bq_1\in\mathbb{C}^{2n}$ with
\begin{align}\label{initial}
\bq_1^{H}&\Gamma_0\bq_1=1,
\end{align}
which obviously satisfies  $\bq_1^{H}\Gamma_0\Pi\overline{\bq}_1=0$,
once  one $Q$-factor $Q_{2k}$ of $K_{2k}(\scrH,\bq_1)$, which corresponds to a  \GQR\ factorization of $K_{2k}(\scrH,\bq_1)$,
is acquired by computing a factorization
specified in \eqref{decomposition}, 
we then can obtain  a $\mathrm{\Gamma}$-orthogonal basis for  the $\Pi$-Krylov subspace $\mathcal{K}_{2k}(\scrH, \bq_1)$.
In this subsection, we will show how to get  the decomposition \eqref{decomposition} for
the given initial vector $\bq_1\in\mathbb{C}^{2n}$ satisfying the initial condition \eqref{initial}.

Write the $j$-th  column of $Q_{2k+2}$ as $\bq_j$,
implying  the $(k+1+j)$-th column of $Q_{2k+2}$  is  $\Pi\overline{\bq}_j$,
and collect the first $k$ and the $(k+2)$-th, $\ldots$, $(2k+1)$-th columns of $Q_{2k+2}$ in
$Q_{2k}$, i.e.,
\begin{align*}
Q_{2k+2}&=\begin{bmatrix}\bq_1&\cdots&\bq_{k+1}&\Pi\overline{\bq}_1&\cdots&\Pi\overline{\bq}_{k+1}\end{bmatrix},
	\\
Q_{2k}&=\begin{bmatrix}\bq_1&\cdots&\bq_{k}&\Pi\overline{\bq}_1&\cdots&\Pi\overline{\bq}_{k}\end{bmatrix}.
\end{align*}
Let $\Gamma_{2k+2}=\diag(D_{\Gamma}, -D_{\Gamma})\in\mathbb{C}^{(2k+2)\times (2k+2)}$ be  the diagonal  matrix
satisfying
\begin{align}\label{orthogonal}
Q_{2k+2}^{H}\Gamma_0Q_{2k+2}=\Gamma_{2k+2}
\end{align}
and denote its  diagonal entries
by $\delta_1, \delta_2, \ldots, \delta_{k+1}, -\delta_1, -\delta_2, \ldots, -\delta_{k+1}$.
Obviously, $\delta_1=1$ due to the initial condition \eqref{initial}.
Since theorem \ref{theorem-K-2k-2k+2}  illustrates that 
$\diag(\delta_1, \ldots, \delta_k)\widetilde{T}_{2k}(1:k,1:k)$
is  a Hermitian matrix, we consequently can  write
the submatrices $\widetilde{T}_{2k}(1:k+1,1:k)$ and $\widetilde{T}_{2k}(k+2:2(k+1), 1:k)$ as
\begin{align}
&\widetilde{T}_{2k}(1:k+1,1:k)=\left[\begin{array}{cccccc}\alpha_1&\frac{\delta_2}{\delta_1}\overline{\beta}_1&&&\\
                  \beta_1&\alpha_2&\frac{\delta_3}{\delta_2}\overline{\beta}_2&&&\\
                  &\beta_2&\alpha_3&\frac{\delta_4}{\delta_3}\overline{\beta}_3&&\\
                  &&\ddots&\ddots&\ddots&\\
                  &&&\beta_{k-2}&\alpha_{k-1}&\frac{\delta_{k}}{\delta_{k-1}}\overline{\beta}_{k-1}\\
                  &&&&\beta_{k-1}&\alpha_k\\ \label{eq-T11}
                  &&&&&\beta_k\end{array}\right]\in\mathbb{C}^{(k+1)\times k}, \\
&\widetilde{T}_{2k}(k+2:2(k+1),1:k)=\left[\begin{array}{cccc}
                       \gamma_1&&&\\&\gamma_2&&\\ &&\ddots&\\ &&&\gamma_k \\ & & & 0\end{array}\right]
                       \in\mathbb{C}^{(k+1)\times k}, \label{eq-T21}
\end{align}
respectively, where $\alpha_j\in\mathbb{R}$ and $\beta_j, \gamma_j\in\mathbb{C}$ for $j=1, \ldots, k$.

Apparently, to get the decomposition \eqref{decomposition} for
the given vector $\bq_1\in\mathbb{C}^{2n}$, we just need to compute $Q_{2k+2}$ and
those $\alpha_j, \beta_j, \gamma_j, \delta_j$.
Analogously to the classical Lanczos method,
we can compute   $Q_{2k+2}$ and $\widetilde{T}_{2k}$ column by column.
The details are as follows.

Now regarding  the first column of \eqref{decomposition}, which is
\begin{align}\label{decom-firstcolumn}
\scrH\bq_1=\alpha_1\bq_1+\beta_1\bq_2+\gamma_1\Pi\overline{\bq}_1,
\end{align}
by sequentially  pre-multiplying $\bq_1^{H}\Gamma_0$ and
$(\Pi\overline{\bq}_1)^{H}\Gamma_0$ on both sides
of  \eqref{decom-firstcolumn}  and
noting the orthogonal prerequisite \eqref{orthogonal} we should set
\begin{align}\label{alpha1eta1}
\alpha_1=\bq_1^{H}\Gamma_0\scrH\bq_1\in\mathbb{R},
& \qquad \gamma_1=-(\Pi\overline{\bq}_1)^{H}\Gamma_0\scrH\bq_1.
\end{align}
Moreover, by defining
\begin{align}\label{z1}
\bz_2=\scrH\bq_1-\alpha_1\bq_1-\gamma_1\Pi\overline{\bq}_1,
\end{align}
it is easy to check that  $\bz_2^{H}\Gamma_0\bq_1=0$,  $\bz_2^{H}\Gamma_0(\Pi\overline{\bq}_1)=0$
and $\bz_2^{H}\Gamma_0\bz_2$  is a real number.
Additional, $\bz_2^{H}\Gamma_0\Pi_n\overline{\bz}_2=0$ is trivial.
Clearly, by setting $\bz_2=\beta_1\bq_2$ (satisfying $\bq_2^{H}\Gamma_0(\Pi\overline{\bq}_2)=0$), 
since  $\bq_2\in\mathbb{C}^{2n}$ should satisfy $\bq_2^{H}\Gamma_0\bq_2=\delta_2$ 
and  $\bz_2^{H}\Gamma_0\bz_2=|\beta_1|^2\delta_2$, we get  
\begin{align}\label{signd2beta1}
\delta_2=\sign(\bz_2^{H}\Gamma_0\bz_2).
\end{align}
Consequently,  we can take
\begin{align}\label{beta1p1q1}
\beta_1=\sqrt{|\bz_2^{H}\Gamma_0\bz_2|}, \qquad
\bq_2=\frac{1}{\beta_1}\bz_2.
\end{align}

%

Provided that we have acquired the first $j$ columns of  $\widetilde{T}_{2k}$, the first $(j+1)$ columns of
$Q_{2k}$, and $\delta_1, \ldots, \delta_{j+1}$  with $j\geq 1$,
implicating that the $(k+1)$-th, $\ldots$, $(k+j+1)$-th columns of $Q_{2k}$ have already been computed,
then we  are to  calculate  the $(j+1)$-th column of $\widetilde{T}_{2k}$, the $(j+2)$-th column of $Q_{2k}$ and also the scalar $\delta_{j+2}$.

Upon comparing the $(j+1)$-th column of \eqref{decomposition}, it holds that
\begin{align}\label{decom-j+1column}
\scrH\bq_{j+1}
=\frac{\delta_{j+1}}{\delta_j}\overline{\beta}_{j}\bq_{j}+
\alpha_{j+1}\bq_{j+1}+
\beta_{j+1}\bq_{j+2}+
\gamma_{j+1}(\Pi\overline{\bq}_{j+1}).
\end{align}
Similarly to the process to get the first column  of  $\widetilde{T}_{2k}$ and the second column of $Q_{2k}$,
it  is necessary  to take
\begin{align}\label{alphajetaj}
\left\{
\begin{array}{l}
\alpha_{j+1}=\delta_{j+1}\bq_{j+1}^{H}\Gamma_0\scrH\bq_{j+1}, \\
\gamma_{j+1}=-\delta_{j+1}(\Pi\overline{\bq}_{j+1})^{H}\Gamma_0\scrH\bq_{j+1},
\end{array}
\right.
\end{align}
due to the $\mathrm{\Gamma}$-orthogonal postulation $Q_{2k}^{H}\Gamma_0Q_{2k}=\Gamma_{2k}$. 
For $\alpha_{j+1}$ in \eqref{alphajetaj},  it holds that $\alpha_{j+1}\in\mathbb{R}$.
Additional, let
\begin{align}\label{zj+2}
\bz_{j+2}=\scrH\bq_{j+1}
-\frac{\delta_{j+1}}{\delta_j}\overline{\beta}_{j}\bq_{j}
-\alpha_{j+1}\bq_{j+1}
-\gamma_{j+1}(\Pi\overline{\bq}_{j+1}),
\end{align}
then analogously to the vector $\bz_2$, $\bz_{j+2}$ enjoys  some  properties
listed in  Lemma~\ref{lemmazj}.
\begin{lemma}\label{lemmazj}
For $\bz_{j+2}\in\mathbb{C}^{2n}$ it holds that
\begin{enumerate}
  \item[{\em (\romannumeral1)}] $\bz_{j+2}^{H}\Gamma_0\bq_l=0$ and
                   $\bz_{j+2}^{H}\Gamma_0(\Pi\overline{\bq}_l)=0$
                   for all $l=1, \ldots, (j+1)$; and 
  \item[{\em (\romannumeral2)}] $\bz_{j+2}^{H}\Gamma_0\bz_{j+2}$ is a real number. 
\end{enumerate}
\end{lemma}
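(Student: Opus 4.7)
The plan is to handle the two parts separately. Part (ii) is immediate: $\Gamma_0=\diag(I_n,-I_n)$ is Hermitian, so $\bz_{j+2}^H\Gamma_0\bz_{j+2}=\overline{\bz_{j+2}^H\Gamma_0\bz_{j+2}}$, i.e., it is real. No computation is needed.

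For part (i), I will expand $\bq_l^H\Gamma_0\bz_{j+2}$ and $(\Pi\overline{\bq}_l)^H\Gamma_0\bz_{j+2}$ using the definition \eqref{zj+2} and the $\Gamma$-orthogonality \eqref{orthogonal}, which (since $\Gamma_{2k+2}=\diag(D_\Gamma,-D_\Gamma)$ with $D_\Gamma=\diag(\delta_1,\ldots,\delta_{k+1})$) amounts to $\bq_l^H\Gamma_0\bq_m=\delta_l\delta_{lm}$, $(\Pi\overline{\bq}_l)^H\Gamma_0(\Pi\overline{\bq}_m)=-\delta_l\delta_{lm}$, and $\bq_l^H\Gamma_0(\Pi\overline{\bq}_m)=0$. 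The diagonal cases $l=j+1$ follow by direct substitution: the choices of $\alpha_{j+1}$ and $\gamma_{j+1}$ in \eqref{alphajetaj} are precisely what is needed to annihilate $\bq_{j+1}^H\Gamma_0\bz_{j+2}$ and $(\Pi\overline{\bq}_{j+1})^H\Gamma_0\bz_{j+2}$, respectively.

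The real work is the off-diagonal cases $l\le j$, and I would handle these by transposing $\scrH$ across $\Gamma_0$ via Lemma~\ref{lemma-H-hermitian}. For $\bq_l^H\Gamma_0\scrH\bq_{j+1}=(\scrH\bq_l)^H\Gamma_0\bq_{j+1}$, the $l$-th column of the already-established decomposition, read off from \eqref{eq-T11}--\eqref{eq-T21}, gives
\[
\scrH\bq_l=\tfrac{\delta_l}{\delta_{l-1}}\overline{\beta}_{l-1}\bq_{l-1}+\alpha_l\bq_l+\beta_l\bq_{l+1}+\gamma_l\Pi\overline{\bq}_l,
\]
so only the $\bq_{l+1}$ contribution can be nonzero against $\bq_{j+1}$. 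For $l\le j-1$ even this vanishes, and the term $\tfrac{\delta_{j+1}}{\delta_j}\overline{\beta}_j\bq_j$ in \eqref{zj+2} is absent. For $l=j$ the single surviving term is $\overline{\beta}_j\bq_{j+1}^H\Gamma_0\bq_{j+1}=\overline{\beta}_j\delta_{j+1}$, which is cancelled exactly by $\tfrac{\delta_{j+1}}{\delta_j}\overline{\beta}_j\cdot\delta_j$, the contribution of the $\bq_j$ term in $\bz_{j+2}$.

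For the $\Pi\overline{\bq}_l$ orthogonalities with $l\le j$, the same transposition yields $(\Pi\overline{\bq}_l)^H\Gamma_0\scrH\bq_{j+1}=(\scrH\Pi\overline{\bq}_l)^H\Gamma_0\bq_{j+1}$, and I will use $\scrH\Pi=-\Pi\overline{\scrH}$ (the identity invoked in the proof of Lemma~\ref{lemma-eigenpairs} and encoded for powers in Lemma~\ref{lemma-PiHl}) to write $\scrH\Pi\overline{\bq}_l=-\Pi\overline{\scrH\bq_l}$. Conjugating and $\Pi$-multiplying the expansion of $\scrH\bq_l$ turns it into a short combination of $\Pi\overline{\bq}_{l-1},\Pi\overline{\bq}_l,\Pi\overline{\bq}_{l+1},\bq_l$, each of which is $\Gamma_0$-orthogonal to $\bq_{j+1}$ (since $l+1\le j+1$ appears only in the $\Pi\overline{\bq}$ family, which is always $\Gamma_0$-orthogonal to the $\bq$ family, and $\bq_l\ne\bq_{j+1}$). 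There is no $\alpha_{j+1}\bq_{j+1}$ or $\gamma_{j+1}\Pi\overline{\bq}_{j+1}$ compensation required here, so the whole inner product is zero. The main obstacle is purely bookkeeping — keeping the signs from $\Gamma_0$ and $\Pi$ and the ratios $\delta_{j+1}/\delta_j$ consistent — rather than any deeper difficulty.
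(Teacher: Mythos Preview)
Your argument is correct. Part (i) proceeds exactly as the paper does: both use Lemma~\ref{lemma-H-hermitian} to move $\scrH$ onto $\bq_l$, then invoke the $l$-th column of the already-built decomposition together with the $\Gamma$-orthogonality relations, with the $l=j$ and $l=j+1$ cancellations handled by the definitions of $\beta_j$, $\alpha_{j+1}$, $\gamma_{j+1}$.

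Your treatment of (ii) differs from the paper's and is shorter. You observe that $\Gamma_0$ is Hermitian, so $\bz^H\Gamma_0\bz\in\mathbb{R}$ for any $\bz$; this settles the claim in one line. The paper instead computes the value explicitly,
\[
\bz_{j+2}^H\Gamma_0\bz_{j+2}=\bq_{j+1}^H\scrH^H\Gamma_0\scrH\bq_{j+1}-\tfrac{1}{\delta_j}|\beta_j|^2-\tfrac{1}{\delta_{j+1}}|\alpha_{j+1}|^2+\tfrac{1}{\delta_{j+1}}|\gamma_{j+1}|^2,
\]
using part (i) to drop terms, and then reads off realness from the form of the expression. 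Your route is more economical for the stated lemma; the paper's route has the side benefit of an explicit formula (useful if one later wants to analyze the sign or magnitude of $\delta_{j+2}$ and $\beta_{j+1}$).
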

\begin{proof}
Firstly, some directly calculations show that
\begin{align*}
\bz_{j+2}^{H}\Gamma_0\bq_l=
\begin{cases}
\bq_{j+1}^{H}\scrH^{H}\Gamma_0\bq_l,
&\qquad l=1, \ldots, (j-1);\\
\bq_{j+1}^{H}\scrH^{H}\Gamma_0\bq_{j}
-\delta_{j+1}\overline{\beta}_{j}, &\qquad l=j;\\
\bq_{j+1}^{H}\scrH^{H}\Gamma_0\bq_{j+1}
-\delta_{j+1}\alpha_{j+1},& \qquad l=j+1.
\end{cases}
\end{align*}
Moreover, Lemma \ref{lemma-H-hermitian}, the decomposition \eqref{decomposition} and
the definition $\alpha_{j+1}$ specified in \eqref{alphajetaj} will  cooperatively certificate that
\begin{align*}
	&\bq_{j+1}^{H}\scrH^{H}\Gamma_0\bq_l=0 \qquad \text{for} \qquad l=1, \ldots, (j-1), \\
	&\bq_{j+1}^{H}\scrH^{H}\Gamma_0\bq_{j}=\delta_{j+1}\overline{\beta}_j, \qquad
\bq_{j+1}^{H}\scrH^{H}\Gamma_0\bq_{j+1}=\delta_{j+1}\alpha_{j+1},
\end{align*}
leading to the result we are to prove. And for the second result in (\romannumeral1),
it  can be proved in the same way,  we hence omit the details here.
For the  result in (\romannumeral2), it holds that
$\bz_{j+2}^{H}\Gamma_0\bz_{j+2}=\bz_{j+2}^{H}\Gamma_0 \scrH\bq_{j+1}
=\bq_{j+1}^{H}\scrH^{H} \Gamma_0 \scrH \bq_{j+1}
-\frac{1}{\delta_{j}}|\beta_j|^2-\frac{1}{\delta_{j+1}}|\alpha_{j+1}|^2+\frac{1}{\delta_{j+1}}|\gamma_{j+1}|^2$,
which is a real scalar.
\end{proof}

Lemma \ref{lemmazj}  fundamentally  guarantees no interruptions of the computing procedure for 
the $(j+2)$-th column $\bq_{j+2}$ in $Q_{2k}$ and
the $(j+1)$-th subdiagonal  entry  $\beta_{j+1}$ in  $\widetilde{T}_{2k}$. To this end,
we set $\beta_{j+1}\bq_{j+2}=\bz_{j+2}$.
On the other hand, $\bq_{j+2}^{H}\Gamma_0\bq_{j+2}=\delta_{j+2}$
gives  $\bz_{j+2}^{H}\Gamma_0 \bz_{j+2}=|\beta_{j+1}|^2\delta_{j+2}$,
suggesting
\begin{align}\label{dj}
\delta_{j+2}=\sign(\bz_{j+2}^{H}\Gamma_0 \bz_{j+2}).
\end{align}
Furthermore, $\beta_{j+1}$ can be arranged as a real scalar, that is,
\begin{align}\label{betaj}
\beta_{j+1}=\sqrt{|\bz_{j+2}^{H}\Gamma_0 \bz_{j+2}|},
\end{align}
and $\bq_{j+2}$ can be obtained as
\begin{align}\label{pqj}
\bq_{j+2}=\frac{1}{\beta_{j+1}}\bz_{j+2}.
\end{align}

It is worthwhile to point out  that the computing process of
the \GLan\   decomposition  \eqref{decomposition}  
can proceed without hindrance only when no subdiagonal elements $\beta_j$ equal to zero.
Otherwise, breakdown will happen. Without loss of generality, we assume that
$|\bz_{j+1}^{H}\Gamma_0 \bz_{j+1}|=0$ with
$\bz_{j+1}=\scrH\bq_{j}-\frac{\delta_{j}}{\delta_{j-1}}\overline{\beta}_{j-1}\bq_{j-1}
-\alpha_{j}\bq_{j}-\gamma_{j}(\Pi\overline{\bq}_{j})$, leading to  $\beta_{j}=0$,
then
\[
\scrH Q_{2j}=Q_{2j}T_{2j}, \qquad  T_{2j}=\begin{bmatrix}T_{11}&-\overline{T}_{21}\\T_{21}&-\overline{T}_{11}\end{bmatrix},
\]
where $T_{11}\in\mathbb{C}^{j\times j}$ is some tridiagonal matrix and $T_{21}\in\mathbb{C}^{j\times j}$
is some diagonal matrix. By noting that $Q_{2j}^{H}\Gamma_0Q_{2j}=\Gamma_{2j}$ for some matrix 
$\Gamma_{2j}\in\bGamma_{2j}$, it then  holds that
\[
(\Gamma_{2j}Q_{2j}^{H}\Gamma_0)\scrH Q_{2j}=T_{2j}.
\]
Lemma \ref{Lemma-Q1} in the subsequent subsection demonstrates that $Q_{2j}$ can be expanded to a 
nonsingular matrix $Q\in\mathbb{C}^{2n\times 2n}$ with $Q=\begin{bmatrix}Q_{2j}&Q_{u}\end{bmatrix}$,  
	such that $Q^{H}\Gamma_0Q=\diag(\Gamma_{2j}, \Gamma_{2n-2j})$ for some $\Gamma_{2n-2j}\in\bGamma_{2n-2j}$.
Accordingly, we have $Q^{-1}=\diag(\Gamma_{2j}, \Gamma_{2n-2j})Q^{H}\Gamma_0$ and
\[
Q^{-1}\scrH Q=\begin{bmatrix}T_{2j}&\Gamma_{2j}Q_{2j}^{H}\Gamma_0\scrH Q_{u}\\
0&\Gamma_{2n-2j}Q_{u}^{H}\Gamma_0\scrH Q_u\end{bmatrix},
\]
implying that $\lambda(T_{2j})\subseteq \lambda(\scrH)$.
Consequently, we can acquire some eigenvalues of $\scrH$ by computing all eigenvalues of $T_{2j}$, which is of small size.

We synthesize the procedure  of computing
the \GLan\   decomposition  \eqref{decomposition} in algorithm \ref{algorithm_lanczos}.

\begin{algorithm}[t]
\caption{\GLan\   Algorithm}
\label{algorithm_lanczos}
\begin{algorithmic}[1]
   \REQUIRE $\scrH$, $\Gamma_0$, initial vector $q_1\in\mathbb{C}^{2n}$ satisfying \eqref{initial} and $k$;
   \ENSURE  $Q_{2k}\in\GO_{2n\times 2k}$ with respect to $\Gamma_0$, $\Gamma_{2k}=Q_{2k}^H\Gamma_0 Q_{2k}\in\bGamma_{2k}$,
            $\widetilde{T}_{2k}\in\mathbb{C}^{(2k+2)\times 2k}$ being the form specified in \eqref{eq-T11} and \eqref{eq-T21}.
   \vspace{1ex}\hrule\vspace{1ex}
   \STATE Set $\Gamma_{2k}(1,1)=1$ and $\Gamma_{2k}(k+1, k+1)=-1$.
   \STATE Compute $\alpha_1$ and $\gamma_1$ by \eqref{alpha1eta1}.
   \STATE Compute $\delta_2$, $\beta_1$ and $\bq_2$ by \eqref{signd2beta1} and \eqref{beta1p1q1}, respectively.
   \STATE Set j=2;
   \FOR{$j=2:1:k-1$}
     \STATE  Compute $\alpha_j$ and $\gamma_j$ by \eqref{alphajetaj}.
     \STATE  Compute $\delta_{j+1}$, $\beta_j$ and $\bq_{j+1}$ by \eqref{dj}, \eqref{betaj} and \eqref{pqj}, respectively.
     \ENDFOR
   \STATE Compute $\bz_{k+1}\in\mathbb{C}^{2n}$ by \eqref{zj+2}.
   \STATE Compute $\alpha_k$, $\gamma_k$ and $\beta_k$ by \eqref{alphajetaj} and \eqref{betaj}, respectively.
\end{algorithmic}
\end{algorithm}


\begin{remark}\label{rm:remarklancos}
Here, \GLan~Algorithm \ref{algorithm_lanczos} focuses on computing some extreme eigenpais of $\scrH$, 
i.e., those eigenvalues with maximum and minimum moduli and their associated  eigenvectors. 
However, if some eigenvalues around  $\sigma\in\mathbb{C}$ are needed, 
 the double-shift-invert technique and the \GLan~algorithm will cooperate  
 to  catch them and also their associated  eigenvectors. In detail,  define  
\[
	\mathscr{W}=(\scrH-\sigma I_{2n})(\scrH+\sigma I_{2n})\equiv\scrH^2-\sigma^2 I_{2n},
\]
and
\[
	\mathscr{W}= (\scrH-\sigma I_{2n})(\scrH+\sigma I_{2n})(\scrH-\bar{\sigma} I_{2n})
	(\scrH+\bar{\sigma} I_{2n})
	\equiv \scrH^4-(\sigma^2+\overline{\sigma}^2)\scrH + |\sigma|^4 I_{2n},  
\]
when  $\sigma$ is real / purely imaginary and complex. 
It is easily  checked that $\mathscr{W}$ is a $\Pi^+$-Hermitian matrix with respect to 
$\Gamma_0\in \bGamma_{2n}$. According to   Remark~\ref{remark3}, 
there exists a $\Gamma$-unitary matrix $Q$ such that $Q^{-1}\mathscr{W}Q$ is a 
$\Pi^{+}$-Hermitian-tridiagonal matrix with respect to some $\Gamma_{2n}\in\bGamma_{2n}$. 
As a result, the proposed \GLan~algorithm can be applied to $\mathscr{W}^{-1}$ implicitly 
for the computation of  any designed eigenpairs.
\end{remark}

\subsection{Error bounds}
\begin{lemma}\label{Lemma-Q1}
For any arbitrary $Q_1\in\GO_{2n\times 2k}$,
if $Q_1^{H}\Gamma_{2n}Q_1=\Gamma_{2k}$ for $\Gamma_{2n}\in\bGamma_{2n}$, $\Gamma_{2k}\in\bGamma_{2k}$
and write $\Gamma_{2k}=\diag(D_{\Gamma_{2k}},  -D_{\Gamma_{2k}})$,
then there exists some matrix $Q_{2}\in\GO_{2n\times (2n-2k)}$
such that $Q=\begin{bmatrix}Q_1&Q_2\end{bmatrix}\in\mathbb{C}^{2n\times 2n}$ is nonsingular and
	satisfies $Q^{H}\Gamma_{2n}Q=\diag(\Gamma_{2k}, \Gamma_{2n-2k})$ with $\Gamma_{2n-2k}\in\bGamma_{2n-2k}$.
\end{lemma}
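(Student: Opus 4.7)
The plan is to construct $Q_2$ as a $\Pi^+$-matrix whose $2(n-k)$ columns form a $\Gamma_{2n}$-orthogonal basis for the $\Gamma_{2n}$-orthogonal complement of $\mathrm{range}(Q_1)$. Because $Q_1^H \Gamma_{2n} Q_1 = \Gamma_{2k}$ is nonsingular, the Hermitian form $\langle u, v \rangle := v^H \Gamma_{2n} u$ restricts non-degenerately to $\mathrm{range}(Q_1)$, giving the $\Gamma_{2n}$-orthogonal decomposition $\mathbb{C}^{2n} = \mathrm{range}(Q_1) \oplus V$, where $V := \{v \in \mathbb{C}^{2n} : v^H \Gamma_{2n} Q_1 = 0\}$ has dimension $2n - 2k$ and the form restricts non-degenerately to $V$ as well.

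Two structural identities drive the construction: (i) $Q_1 \Pi_{2k} = \Pi \overline{Q_1}$ from the $\Pi^+$-structure of $Q_1$, and (ii) $\Pi \Gamma_{2n} = -\Gamma_{2n} \Pi$, which is immediate from $\Gamma_{2n} = \diag(J, -J)$ with $J \in \mathbb{J}_n$. Combining them, I first show $V$ is closed under the antilinear involution $v \mapsto \Pi \overline{v}$: for $v \in V$, $(\Pi \overline{v})^H \Gamma_{2n} Q_1 = v^T \Pi \Gamma_{2n} Q_1 = -v^T \Gamma_{2n} \Pi Q_1 = -v^T \Gamma_{2n} \overline{Q_1}\, \Pi_{2k} = -\overline{v^H \Gamma_{2n} Q_1}\, \Pi_{2k} = 0$. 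Two further one-line calculations will be useful: $w^H \Gamma_{2n} \Pi \overline{w} = 0$ for every $w$, because the scalar equals its own negative via $\Pi \Gamma_{2n} = -\Gamma_{2n} \Pi$; and $(\Pi \overline{w})^H \Gamma_{2n} (\Pi \overline{w}) = -\overline{w^H \Gamma_{2n} w}$, using $\Pi \Gamma_{2n} \Pi = -\Gamma_{2n}$.

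The main construction is a hyperbolic Gram--Schmidt iteration of length $n - k$. Set $V_1 := V$. At step $i$, pick $\tilde w \in V_i$ with $\tilde w^H \Gamma_{2n} \tilde w \neq 0$, which exists because the Hermitian form is non-degenerate on $V_i$, and rescale to obtain $w_i \in V_i$ with $w_i^H \Gamma_{2n} w_i = \delta_i \in \{+1, -1\}$. By the identities above, the pair $\{w_i, \Pi \overline{w_i}\}$ is $\Gamma_{2n}$-orthogonal with Gram matrix $\diag(\delta_i, -\delta_i)$. Define $V_{i+1} := V_i \cap \{\mathrm{span}(w_i, \Pi \overline{w_i})\}^{\perp}$, the complement taken with respect to $\langle \cdot, \cdot \rangle$. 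A short calculation mirroring the one for $V$ shows $V_{i+1}$ is again closed under $v \mapsto \Pi \overline{v}$; for instance, for $v \in V_{i+1}$ one computes $(\Pi \overline{v})^H \Gamma_{2n} w_i = -v^T \Gamma_{2n} \Pi w_i = -\overline{v^H \Gamma_{2n} \Pi \overline{w_i}} = 0$, and analogously against $\Pi \overline{w_i}$. Since $\dim V_i = 2(n-k-i+1)$, the induction terminates after exactly $n-k$ steps.

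Finally, set $W := [w_1, \ldots, w_{n-k}]$ and $Q_2 := [W, \Pi \overline{W}] \in \bPi_{2n \times (2n-2k)}^{+}$. The pairwise off-diagonal vanishings $w_i^H \Gamma_{2n} \Pi \overline{w_j} = 0$ for $i \neq j$ follow from the inductive construction together with the transpose-type identity $w_i^H \Gamma_{2n} \Pi \overline{w_j} = -w_j^H \Gamma_{2n} \Pi \overline{w_i}$, yielding $Q_2^H \Gamma_{2n} Q_2 = \diag(D, -D) =: \Gamma_{2n-2k}$ with $D = \diag(\delta_1, \ldots, \delta_{n-k})$. Hence $Q := [Q_1, Q_2]$ satisfies $Q^H \Gamma_{2n} Q = \diag(\Gamma_{2k}, \Gamma_{2n-2k})$, a nonsingular diagonal matrix of $\pm 1$'s, so $Q$ itself is nonsingular. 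The main obstacle is the structural closure of $V$ and each $V_i$ under $v \mapsto \Pi \overline{v}$; once this is in place, the hyperbolic Gram--Schmidt cannot break down because the Hermitian form stays non-degenerate on every $V_i$.
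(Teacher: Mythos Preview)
Your argument is correct, but it follows a genuinely different route from the paper's own proof. The paper does not build $Q_2$ by hand on the $\Gamma_{2n}$-orthogonal complement; instead it invokes the full \GQR\ factorization $Q_1 = PR$ with $P\in\GO_{2n}$, $P^H\Gamma_{2n}P=\widetilde\Gamma_{2n}\in\bGamma_{2n}$, and $R\in\mathbb{U}^+_{2n\times 2k}$ (its existence being guaranteed by Theorem~\ref{thm:GQR-basic}(i), since all $\Pi^-$-leading principal minors of $Q_1^H\Gamma_{2n}Q_1=\Gamma_{2k}$ equal $\pm 1$). From $R^H\widetilde\Gamma_{2n}R=\Gamma_{2k}$ and the triangular shape of $R$ one reads off that $R_2=0$ and $R_1$ is diagonal with unit-modulus entries, so $Q_1$ is, up to column phases, made of $2k$ columns of the square $\Gamma$-unitary matrix $P$. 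One then simply takes $Q_2$ to be the remaining $2(n-k)$ columns of $P$.

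What each approach buys: the paper's argument is short because the heavy lifting (existence of a square $\Gamma$-unitary $P$) has already been packaged into the \GQR\ machinery of Section~\ref{sec:GG-QR}; but it is not self-contained, since that existence is itself quoted from \cite{lllin:17}. Your hyperbolic Gram--Schmidt is more elementary and fully self-contained: it uses only the two identities $\Pi\Gamma_{2n}=-\Gamma_{2n}\Pi$ and $Q_1\Pi_{2k}=\Pi\,\overline{Q_1}$ to keep the antilinear involution $v\mapsto\Pi\overline v$ compatible with each successive complement, and non-degeneracy of the indefinite form to avoid breakdown. The price is a slightly longer induction, but the payoff is that one sees directly why the $\Pi^+$-structure of $Q_2$ and the block form $\diag(D,-D)$ of $\Gamma_{2n-2k}$ emerge, without appealing to any prior factorization theorem.
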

\begin{proof}
Since for the two given matrices $Q_1$ and $\Gamma_{2n}$  there exists a \GQR\ factorization
of $Q_1$ with respect to $\Gamma_{2n}$, which is written as $Q_1=PR$ with 
$P\in\GO_{2n}$,  $R\in\mathbb{U}^{+}_{2n\times 2k}$, 
$P^{H}\Gamma_{2n}P=\widetilde{\Gamma}_{2n}$  for $\widetilde{\Gamma}_{2n}\in\bGamma_{2n}$,
it then  holds that $R^{H}\widetilde{\Gamma}_{2n}R=\Gamma_{2k}$,
which is equivalent to
\begin{align}\label{Q1QR}
R_1^{H}D_{\widetilde{\Gamma}_{2n}}R_1-R_2^{H}D_{\widetilde{\Gamma}_{2n}}R_2=D_{\Gamma_{2k}},  \qquad
R_2^T D_{\widetilde{\Gamma}_{2n}}R_1=R_1^T D_{\widetilde{\Gamma}_{2n}}R_2,
\end{align}
where $R=\begin{bmatrix}R_1&\overline{R}_2\\R_2&\overline{R}_1\end{bmatrix}$, $R_1, R_2\in \mathbb{C}^{n\times k}$
	and $\widetilde{\Gamma}_{2n}=\diag(D_{\widetilde{\Gamma}_{2n}},   -D_{\widetilde{\Gamma}_{2n}})$. By noticing the
structures of $R_1$ and $R_2$, which are upper triangular and strictly upper triangular, respectively,
we get that $R_1$ is diagonal and $R_2=0$ upon comparing each
entry of both sides of the two equations in \eqref{Q1QR}.
Furthermore, $D_{\widetilde{\Gamma}_{2n}}(1:k,1:k)=D_{{\Gamma}_{2k}}$ and the modulus  of all 
diagonal elements of $R_1$ equal to $1$, yielding that
$Q_1=\begin{bmatrix}P(:,1:k)\Theta&P(:,n+1:n+k)\overline{\Theta}\end{bmatrix}$ with
$\Theta=\diag(\theta_1, \ldots, \theta_k)$,  $|\theta_j|=1$ for $j=1, \ldots, k$.
Hence by taking
\[Q_2=\begin{bmatrix}P(:,k+1:n)&P(:,n+k+1:2n)\end{bmatrix},
\]
the result follows directly.
\end{proof}

Now applying  Lemma \ref{Lemma-Q1} to  the matrix $Q_{2k+2}\in\mathbb{C}^{2n\times (2k+2)}$
in \eqref{decomposition}, which is obtained via pursuing  
the  \GLan\   algorithm in the previous subsection,
 we then  get  there exists  $Q_{u}\in\GO_{2n\times (2n-2k-2)}$ with
$Q_{u}^{H}\Gamma_0Q_{u}=\Gamma_{2n-2k-2}$ for some $\Gamma_{2n-2k-2}\in\bGamma_{2n-2k-2}$,
such that
$Q:= \begin{bmatrix}Q_{2k+2}&Q_{u}\end{bmatrix}\in\mathbb{C}^{2n\times 2n}$ is nonsingular with
$Q^{-1}=\diag(\Gamma_{2k+2},  \Gamma_{2n-2k-2})Q^{H}\Gamma_0$ and
satisfies $Q^{H}\Gamma_0Q=\diag(\Gamma_{2k+2},  \Gamma_{2n-2k-2})$.
Pre-multiplying $Q^{-1}$ on both sides of \eqref{decomposition}
gives  $Q^{-1} \scrH Q_{2k}=
\begin{bmatrix}\widetilde{T}_{2k}^T &0\end{bmatrix}^T $.
Furthermore, let $P\in\mathbb{R}^{2n\times 2n}$ be the  permutation matrix with
\[
P=[\begin{array}{ccccccccccc}\be_{1} & \cdots & \be_{k} & \be_{k+2} & \cdots&
\be_{2k+1} & \be_{k+1} & \be_{2k+2} & \be_{2k+3} & \cdots & \be_{2n}\end{array}],
\]
we  then have
\begin{align}\label{HQP}
P^T Q^{-1}\scrH QP
=\left[\begin{array}{c|c|c}
\mathbf H_{11} &
\begin{array}{cc}\delta_{k}\delta_{k+1}\overline{\beta}_{k}\be_{k} &
-\delta_{k}\delta_{k+1}\beta_{k}\be_{2k}\end{array}
 & \mathbf 0 \\
&\\[-2mm]
\hline
&\\[-2mm]
\begin{array}{c}
\beta_{k}\be_k^T \\
-\overline{\beta}_k\be_{2k}^T \\
\end{array} & \mathbf H_{22}&\mathbf H_{23}\\
&\\[-2mm]
\hline
&\\[-2mm]
\mathbf 0 &\mathbf H_{32} &\mathbf H_{33}\end{array}\right]
\end{align}
with
\begin{align*}
	&\mathbf{H}_{11}=\begin{bmatrix}\widetilde{T}_{2k}(1:k,1:k)&-\overline{\widetilde{T}}_{2k}(k+2:2k+1,1:k)\\
\widetilde{T}_{2k}(k+2:2k+1,1:k)&-\overline{\widetilde{T}}_{2k}(1:k,1:k) \end{bmatrix}, \\
&\mathbf{H}_{22}=\begin{bmatrix}\delta_{k+1}&\\&-\delta_{k+1}\end{bmatrix}
\begin{bmatrix}\bq_{k+1}&\Pi\overline{\bq}_{k+1}\end{bmatrix}^{H}\Gamma_0 H
\begin{bmatrix}\bq_{k+1}&\Pi\overline{\bq}_{k+1}\end{bmatrix},\\
&\mathbf{H}_{23}=\begin{bmatrix}\delta_{k+1}&\\&-\delta_{k+1}\end{bmatrix}
\begin{bmatrix}\bq_{k+1}&\Pi\overline{\bq}_{k+1}\end{bmatrix}^{H}\Gamma_0 H Q_u,\\
	& \mathbf{H}_{32}=\Gamma_{2n-2k-2}Q_{u}^{H}\Gamma_0 H
\begin{bmatrix}\bq_{k+1}&\Pi\overline{\bq}_{k+1}\end{bmatrix},\qquad
	\mathbf{H}_{33}=\Gamma_{2n-2k-2}Q_{u}^{H}\Gamma_0 H Q_u,
\end{align*}
where $\delta_k$ and $\delta_{k+1}$, respectively,  are the $k$-th and $(k+1)$-th   
diagonal  entries of $\Gamma_{2k+2}$, 
and $\bq_{k+1}$ and $\Pi\overline{\bq}_{k+1}$ are  the $(k+1)$-th and $(2k+2)$-th columns of $Q_{2k+2}$.
Hence accordingly, in case that $\beta_{k}=0$, it will holds that
we can obtain some eigenvalues of $\scrH$ by computing those of $\mathbf{H}_{11}$.
More specifically, we will have the following theorem.

\begin{theorem}\label{theorem-rayleigh}
	For those  matrices   defined above, if $\mathbf{H}_{11}\bx =\nu \bx$ with
$0\neq \bx \in\mathbb{C}^{2k}$ and $\nu\in\mathbb{C}$, it then  holds that
\begin{enumerate}
	\item[{\em (\romannumeral1)}] $\|Q^{-1}(\scrH Q_{2k}-Q_{2k}\mathbf{H}_{11}) \|_2
		=\min\{\|Q^{-1}(\scrH Q_{2k}-Q_{2k}S) \|_2:
                    \forall S\in\mathbb{C}^{2k\times 2k}\}\equiv |\beta_{k+1}|$; and 
   \item[{\em (\romannumeral2)}] $\scrH Q_{2k} \bx-\nu Q_{2k} \bx =
                     \beta_k\xi_k\bq_{k+1}
                     -\xi_{2k}\overline{\beta}_{k}\Pi\overline{\bq}_{k+1}$,
                     where $\xi_k$ and $\xi_{2k}$ respectively  are the $k$-th and $2k$-th elements of $\bx$.
\end{enumerate}
\end{theorem}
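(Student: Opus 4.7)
The plan is to derive both parts from a single algebraic identity that isolates the coupling between \(Q_{2k}\) and the two extra columns \(\bq_{k+1},\Pi\overline{\bq}_{k+1}\) of \(Q_{2k+2}\). Starting from the \GLan\ decomposition \(\scrH Q_{2k}=Q_{2k+2}\widetilde{T}_{2k}\), the only nonzero entries of rows \(k+1\) and \(2k+2\) of \(\widetilde{T}_{2k}\) are \(\beta_k\) (at column \(k\)) and \(-\overline{\beta}_k\) (at column \(2k\)), read off from \eqref{eq-T11} and \eqref{eq-T21}. This yields the basic identity
\[
\scrH Q_{2k}=Q_{2k}\mathbf{H}_{11}+\beta_k\bq_{k+1}\be_k^T-\overline{\beta}_k\,\Pi\overline{\bq}_{k+1}\be_{2k}^T,
\]
with \(\be_k,\be_{2k}\in\mathbb{R}^{2k}\), which is the common core of both claims.

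Part (ii) is then a one-line verification: applying this identity to \(\bx\) and invoking \(\mathbf{H}_{11}\bx=\nu\bx\) to cancel the first term, together with \(\be_k^T\bx=\xi_k\) and \(\be_{2k}^T\bx=\xi_{2k}\), produces the stated expression \(\beta_k\xi_k\bq_{k+1}-\xi_{2k}\overline{\beta}_k\Pi\overline{\bq}_{k+1}\).

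For part (i) I would pass to the frame furnished by Lemma~\ref{Lemma-Q1}, extending \(Q_{2k+2}\) to a nonsingular \(Q=[Q_{2k+2},Q_u]\). Then \(Q^{-1}\bq_{k+1}=\be_{k+1}\) and \(Q^{-1}\Pi\overline{\bq}_{k+1}=\be_{2k+2}\) in \(\mathbb{C}^{2n}\), while \(Q^{-1}Q_{2k}\) has identity rows in positions \(\{1,\ldots,k,k+2,\ldots,2k+1\}\). Multiplying the core identity by \(Q^{-1}\) and then by the permutation \(P^T\) from \eqref{HQP} yields the block column
\[
P^TQ^{-1}\bigl(\scrH Q_{2k}-Q_{2k}S\bigr)=\begin{bmatrix}\mathbf{H}_{11}-S\\[2pt]\beta_k\be_k^T\\[2pt]-\overline{\beta}_k\be_{2k}^T\\[2pt]0\end{bmatrix},
\]
where the lower three blocks are independent of \(S\) and \(P^T\), being a permutation, preserves the 2-norm. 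The pointwise identity \(\|[A^T,B^T]^T\bx\|_2^2=\|A\bx\|_2^2+\|B\bx\|_2^2\) then forces the spectral norm to be minimized precisely at \(S=\mathbf{H}_{11}\), with the minimum equal to the 2-norm of the residual block alone. That block has only two nonzero entries \(\beta_k\) and \(-\overline{\beta}_k\) occupying distinct rows and distinct columns, so \(RR^H\) is diagonal with spectrum \(\{|\beta_k|^2,|\beta_k|^2,0,\ldots,0\}\), and the 2-norm is \(|\beta_k|\).

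The genuine obstacle is not any single step but the bookkeeping: one must track how the Lanczos indexing of \(\widetilde{T}_{2k}\) in \eqref{eq-T11}--\eqref{eq-T21} corresponds to the permuted block form of \eqref{HQP}, and carefully justify that the optimization of \(S\) happens in the operator 2-norm rather than merely in the Frobenius norm. The additive decomposition displayed above removes the latter worry cleanly, and the remaining work is purely algebraic.
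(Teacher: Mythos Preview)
Your proposal is correct and follows essentially the same route as the paper: both derive the core identity \(\scrH Q_{2k}=Q_{2k}\mathbf{H}_{11}+\beta_k\bq_{k+1}\be_k^T-\overline{\beta}_k\,\Pi\overline{\bq}_{k+1}\be_{2k}^T\) from the \GLan\ decomposition, apply it to \(\bx\) for part~(ii), and for part~(i) pass to the \(Q^{-1}\)-frame (via the permutation \(P\)) to reduce the minimization to the stacked matrix \([(\mathbf{H}_{11}-S)^T,\beta_k\be_k,-\overline{\beta}_k\be_{2k},0]^T\). Your explicit justification that the minimum over \(S\) is attained at \(S=\mathbf{H}_{11}\) in the spectral norm (rather than just Frobenius), and your computation of \(\|R\|_2=|\beta_k|\) via the diagonal structure of \(R^HR\), are a bit more detailed than the paper's terse ``obviously achieves its minimum,'' but the arguments coincide; note also that both yield \(|\beta_k|\), so the \(|\beta_{k+1}|\) in the stated theorem is a typo.
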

\begin{proof}
Since $Q^{-1}\scrH Q_{2k}=PP^T Q^{-1}\scrH Q_{2k}=
P\begin{bmatrix}\mathbf{H}_{11}^T &\beta_{k}\be_{k}&-\overline{\beta}_{k}\be_{2k}&\bf 0\end{bmatrix}^T$
	due to \eqref{HQP},
where $\be_k, \be_{2k}\in\mathbb{C}^{2k}$ here,  it  holds that
\begin{align*}
&Q^{-1}(\scrH Q_{2k}-Q_{2k}S)\\
=&P\begin{bmatrix}\mathbf{H}_{11}^T &\beta_{k}\be_{k}&-\overline{\beta}_{k}\be_{2k}&\bf 0\end{bmatrix}^T
-Q^{-1}QP\begin{bmatrix}\be_1&\cdots&\be_{2k}\end{bmatrix}S\\
=&P\begin{bmatrix}(\mathbf{H}_{11}-S)^T &\beta_{k}\be_{k}&-\overline{\beta}_{k}\be_{2k}&\bf 0\end{bmatrix}^T ,
\end{align*}
leading to
\[
\|Q^{-1}(\scrH Q_{2k}-Q_{2k}S) \|_2=\|\begin{bmatrix}
(\mathbf{H}_{11}-S)^T &\beta_{k}\be_{k}&-\overline{\beta}_{k}\be_{2k}&0\end{bmatrix}^T \|_2
\geq |\beta_{k}|,
\]
which obviously achieves its minimum when $S=\mathbf{H}_{11}$.

For the result in  (\romannumeral2), it follows from the decomposition \eqref{decomposition} and the equation \eqref{HQP}
that
\[
	\scrH Q_{2k} \bx=Q_{2k}\mathbf{H}_{11}\bx+
\begin{bmatrix}\bq_{k+1}&\Pi\overline{\bq}_{k+1}\end{bmatrix}
\begin{bmatrix}\beta_{k}\be_k^T \\ -\overline{\beta}_k\be_{2k}^T \end{bmatrix}\bx,
\]
indicating that $\scrH Q_{2k} \bx-\nu Q_{2k} \bx=
\beta_k (\be_k^T \bx) \bq_{k+1}
-\overline{\beta}_{k}(\be_{2k}^T \bx)\Pi\overline{\bq}_{k+1}$.
Hence the result follows immediately.
\end{proof}

Theorem \ref{theorem-rayleigh} demonstrates that for the given matrix $Q\in\mathbb{C}^{2n\times 2n}$,
whose first $(2k+2)$ columns are computed by the \GLan\   algorithm, $\mathbf{H}_{11}$ will  be the
best candidate,  in some norm minimizing  sense,  with its eigenvalues to approximate those of $\scrH$.
Actually,  $\mathbf{H}_{11}$ acts as the Rayleigh quotient
for the matrix $\scrH$, similarly to that for  Hermitian matrices.
In addition,  the result in  (\romannumeral2) reveals that
$\|\scrH Q_{2k} \bx-\nu Q_{2k} \bx\|_2\leq
(|\beta_{k}\xi_{k}| +|\beta_{k}\xi_{2k}|)\|\bq_{k+1}\|_2$ and
$\|Q^{-1}(\scrH Q_{2k} \bx-\nu Q_{2k} \bx)\|_2\leq |\beta_{k}|\sqrt{|\xi_{k}|^2+|\xi_{2k}|^2}$,
illustrating in theory  how good it is when we use $(\nu, Q_{2k}x)$ to approximate one eigenpair of $\scrH$.

\subsection{Convergence theorem}
In this subsection, we will analyze the convergence rate of the $\bPi$-Krylov subspace in
Definition \ref{krylov-space} to the eigenspace of $\scrH$ corresponding to the
largest eigenvalues in absolute value.

Firstly, we introduce a definition for measuring the distance between two subspaces.
\begin{definition}(\cite{stsu:90})\label{def6}
Let $\mathcal{U}$ and $\mathcal{V}$ be two subspaces of $\mathbb{C}^n$ with dimensions $p$ and $l$, respectively, and $p\leq l$. We call the value
$$\text{dist}(\mathcal{U},\mathcal{V})=\max_{
	\substack{
      \sss\bu\in\mathcal{U}   \\
        \sss\|\bu\|_2=1
	}
} \min_{\sss\bv\in\mathcal{V}}\|\bu-\bv\|_2$$
as the {\em distance between $\mathcal{U}$ and $\mathcal{V}$}.
\end{definition}
For simplicity,  in what follows we always assume that all eigenvalues of $\scrH$
are semi-simple, and $\l_1=\rho e^{\imath \theta}$ is neither real nor purely imaginary with
\begin{equation}\label{eq6.1}
|\lambda_1|= |\l_2|= |\l_3|= |\l_4|>|\l_5|\geq \cdots\geq |\l_{2n}|.
\end{equation}
Let $\bx_1,\cdots,\bx_{2n}$ be the  eigenvectors of $\scrH$ corresponding to $\lambda_1, \ldots, \lambda_{2n}$, respectively,
where $\|\bx_j\|_2=1$ for $j=1,\cdots,2n$.
According to Lemma \ref{lemma-eigenpairs}, we can rearrange the eigenvalues in \eqref{eq6.1} as
\begin{equation}\label{eq6.2}
\lambda_2= -\l_1,\ \l_3= \overline{\l}_1,\ \l_4=-\overline{\l}_1\ \text{ and  }\
\bx_4=\Pi\overline{\bx}_1, \ \bx_3=\Pi\overline{\bx}_2.
\end{equation}
From the above assumption, it follows that  $\bx_1,\cdots,\bx_{2n}$ are linearly independent,
suggesting that the vector  $\bq_1$ in \eqref{K_2k} can be written  as a linear combination of all eigenvectors of $\scrH$:
\begin{equation}\label{eq6.3}
\bq_1=\sum_{j=1}^{2n}a_j\bx_j.
\end{equation}
Define
\begin{equation}\label{eq6.4}
X_1=[\bx_1,\bx_2,\bx_3,\bx_4] \quad  \text{ and} \quad \mathcal{X}_1=\text{span}\{\bx_1,\bx_2,\bx_3,\bx_4\}.
\end{equation}
Let $p_k$ and $\widehat{p}_k$ be two polynomials of degree $k$ in the following form (for convenience, we  suppose $k$ is even):
\begin{equation}\label{eq6.5}
p_k(z)=\zeta_1\left(\frac{z}{\l_1}\right)^k+\zeta_2\left(\frac{z}{\l_1}\right)^{k-1}, \quad  \widehat{p}_k(z)=\widehat{\zeta}_1\left(\frac{z}{\l_1}\right)^k+\widehat{\zeta}_2\left(\frac{z}{\l_1}\right)^{k-1}
\end{equation}
with $\bz \equiv\begin{bmatrix}\zeta_1&\zeta_2&\widehat{\zeta}_1&\widehat{\zeta}_2\end{bmatrix}^T \in\mathbb{C}^4$.
By \eqref{eq6.3} and Lemma \ref{lemma-eigenpairs}, we have
\begin{align}
	\bg &
	\equiv p_k(\scrH)\bq_1+\widehat{p}_k(\scrH)\Pi\overline{\bq}_1 \nonumber
\\
&=
\sum_{j=1}^{2n} a_j p_k(\scrH)\bx_j + 
\sum_{j=1}^{2n}\overline{a}_j \widehat{p}_k(\scrH)\Pi\overline{\bx}_j
\in \mathcal{K}_{2k+2}(\scrH,\bq_1).\label{eq6.6}
\end{align}
Extract the first four terms of \eqref{eq6.6} from $\bg$ as
 \begin{align}
\widehat{\bg}&=\sum_{j=1}^{4} a_j( p_k(\scrH)\bx_j + 
\sum_{j=1}^{4} \overline{a}_j \widehat{p}_k(\scrH)\Pi\overline{\bx}_j)
=
\sum_{j=1}^{4}a_j p_k(\l_j)\bx_j + 
\sum_{j=1}^4 \overline{a}_j \widehat{p}_k(-\overline{\l}_j)\Pi\overline{\bx}_j\nonumber\\
&= X_1\begin{bmatrix}  a_1p_k(\l_1)+\overline{a}_4\widehat{p}_k(\l_1)\\
	a_2p_k(-\l_1)+\overline{a}_3\widehat{p}_k(-\l_1)\\
	a_3p_k(\overline{\l}_1)+\overline{a}_2\widehat{p}_k(\overline{\l}_1)\\
	a_4p_k(-\overline{\l}_1)+\overline{a}_1\widehat{p}_k(-\overline{\l}_1)
 \end{bmatrix}\nonumber\\
 &=X_1\begin{bmatrix}  a_1&a_1&\overline{a}_4&\overline{a}_4\\
	 a_2&-a_2&\overline{a}_3&-\overline{a}_3\\
	 a_3e^{-\imath 2k\theta } &a_3e^{-\imath 2(k-1)\theta  } 
	 &\overline{a}_2e^{-\imath 2k\theta }&\overline{a}_2e^{-\imath 2(k-1)\theta  }\\
	 a_4e^{-\imath 2k\theta } &-a_4e^{-\imath 2(k-1)\theta  } 
	 &\overline{a}_1e^{-\imath 2k\theta  }&-\overline{a}_1e^{-\imath 2(k-1)\theta  }
 \end{bmatrix}\bz\nonumber\\
 &\equiv X_1 \Phi \bz \in\mathcal{X}_1. \label{eq6.8}
\end{align}
By denoting
\begin{equation}\label{eq6.9}E=\begin{bmatrix} 1&1\\1&-1 \end{bmatrix},\ F= \begin{bmatrix} 1&\\&\tau \end{bmatrix} \text{ with } \tau=e^{\imath2\theta},  \end{equation}
the matrix $\Phi$ in \eqref{eq6.8} can be rewritten as
\begin{equation}\label{eq6.10}
\Phi=\begin{bmatrix}
\begin{bmatrix} a_1\oplus a_2  \end{bmatrix} E &
\begin{bmatrix} \overline{a}_4 \oplus \overline{a}_3  \end{bmatrix} E\\
e^{-\imath 2k\theta } \begin{bmatrix} a_3 \oplus a_4  \end{bmatrix} EF &e^{-\imath 2k\theta } \begin{bmatrix} \overline{a}_2 \oplus \overline{a}_1  \end{bmatrix} EF
  \end{bmatrix}.
\end{equation}
If $a_1a_2\neq 0$, then $\Phi$ can be further decomposed into
\begin{equation}\label{eq6.11}
\Phi=
\begin{bmatrix} I_2 &\\ & e^{-\imath 2k\theta } \begin{bmatrix} \overline{a}_2 \oplus \overline{a}_1  
\end{bmatrix}
\end{bmatrix}
\begin{bmatrix}
 \begin{bmatrix} a_1 \oplus a_2  \end{bmatrix}
 &\b0 \\  \begin{bmatrix} \frac{a_3}{\overline{a}_2} \oplus \frac{a_4}{\overline{a}_1}  \end{bmatrix}
 EFE^{-1} & \widehat{\Phi}
\end{bmatrix}
\begin{bmatrix} E& \begin{bmatrix}  \frac{\overline{a}_4}{a_1} \oplus \frac{\overline{a}_3}{a_2}  
\end{bmatrix}E\\ \b0&  E\end{bmatrix} ,
\end{equation}
where
\begin{align}
\widehat{\Phi}
&=
EFE^{-1}-\begin{bmatrix}  \frac{a_3}{\overline{a}_2} \oplus \frac{a_4}{\overline{a}_1} 
\end{bmatrix}EFE^{-1}\begin{bmatrix}  \frac{\overline{a}_4}{a_1} \oplus \frac{\overline{a}_3}{a_2} 
\end{bmatrix}\nonumber
\\
&=\frac{1}{2}\begin{bmatrix} (1+\tau)(1- \frac{a_3\overline{a}_4}{a_1\overline{a}_2}) 
	&(1-\tau)(1- \frac{|a_3|^2}{|a_2|^2})\\  (1-\tau)(1- \frac{|a_4|^2}{|a_1|^2})&
(1+\tau)(1- \frac{\overline{a}_3a_4}{\overline{a}_1a_2})\end{bmatrix}.\label{eq6.12}
\end{align}
Consequently,  $\Phi$ is invertible if and only if $\widehat{\Phi}$ is nonsingular with the assumption $a_1a_2\neq 0$. Actually, based on the above analysis, we can get the same conclusion 
when  $a_3a_4\neq 0$. Thus, without loss of generality, we can  always require $a_1a_2\neq 0$ in \eqref{eq6.3}.

\begin{theorem}\label{thm6.1}
	Let  $\scrH$ have semi-simple eigenvalues
in the order of \eqref{eq6.1} and \eqref{eq6.2}.
Suppose $k$ (be even) steps of \GLan\ algorithm
 are performed with $\bq_1$ in the form of \eqref{eq6.3}.
 If $a_1a_2\neq 0$ and the matrix $\widehat{\Phi}$ in \eqref{eq6.12} is nonsingular, then we have
 \begin{equation}\label{eq6.13}
 \text{dist}(\mathcal{X}_1,\mathcal{K}_{2k+2}(\scrH,\bq_1))
 \leq \sqrt{2}\|\Phi^{-1}\|_2\|(X_1^H X_1)^{-\frac{1}{2}}\|_2 \sum_{j=5}^{2n}|a_j|\left| \frac{\l_j}{\l_1}\right|^{k-1},
 \end{equation}
where $\Phi$ is defined in \eqref{eq6.8}, and $\mathcal{X}_1$ and $X_1$ in \eqref{eq6.4}.
\end{theorem}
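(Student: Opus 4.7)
The plan is to verify the distance bound directly from Definition~\ref{def6}: for an arbitrary unit vector $\bu\in\mathcal{X}_1$ I will exhibit a vector $\bg\in\mathcal{K}_{2k+2}(\scrH,\bq_1)$ whose $\mathcal{X}_1$-component equals $\bu$, so that $\|\bu-\bg\|_2$ is purely a polynomial tail over $j\ge 5$. Writing $\bu=X_1\bw$ with $\bw\in\mathbb{C}^4$, the identity $\bw^H(X_1^HX_1)\bw=\|X_1\bw\|_2^2=1$ immediately delivers the elementary bound $\|\bw\|_2\le\|(X_1^HX_1)^{-1/2}\|_2$.

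Next, using the polynomial pair $(p_k,\widehat{p}_k)$ of \eqref{eq6.5} and the vector $\bg$ from \eqref{eq6.6}, the leading part decomposes as $\widehat{\bg}=X_1\Phi\bz$ with $\bz=(\zeta_1,\zeta_2,\widehat{\zeta}_1,\widehat{\zeta}_2)^T$. Under the hypotheses $a_1a_2\ne 0$ and $\widehat{\Phi}$ nonsingular, the factorization \eqref{eq6.11} shows $\Phi$ is invertible, so the choice $\bz=\Phi^{-1}\bw$ forces $\widehat{\bg}=\bu$ and reduces $\bg-\bu$ to
\begin{equation*}
\bg-\widehat{\bg}=\sum_{j=5}^{2n}a_jp_k(\lambda_j)\bx_j+\sum_{j=5}^{2n}\overline{a}_j\widehat{p}_k(-\overline{\lambda}_j)\Pi\overline{\bx}_j,
\end{equation*}
where I have used $\scrH\Pi\overline{\bx}_j=-\overline{\lambda}_j\Pi\overline{\bx}_j$ from Lemma~\ref{lemma-eigenpairs}. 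Factoring $p_k(\lambda_j)=(\lambda_j/\lambda_1)^{k-1}[\zeta_1(\lambda_j/\lambda_1)+\zeta_2]$ and applying Cauchy--Schwarz in $\mathbb{C}^2$ give $|p_k(\lambda_j)|\le\sqrt{2}\sqrt{|\zeta_1|^2+|\zeta_2|^2}\,|\lambda_j/\lambda_1|^{k-1}$ for $j\ge 5$, and analogously for $\widehat{p}_k(-\overline{\lambda}_j)$. Combined with $\|\bx_j\|_2=\|\Pi\overline{\bx}_j\|_2=1$ and the triangle inequality, this gives $\|\bg-\widehat{\bg}\|_2\le\sqrt{2}\,\|\bz\|_2\sum_{j=5}^{2n}|a_j|\,|\lambda_j/\lambda_1|^{k-1}$; chaining $\|\bz\|_2\le\|\Phi^{-1}\|_2\|\bw\|_2\le\|\Phi^{-1}\|_2\|(X_1^HX_1)^{-1/2}\|_2$ and taking the supremum over unit $\bu\in\mathcal{X}_1$ produces \eqref{eq6.13}.

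The main obstacle I anticipate is the invertibility of $\Phi$ via the factorization \eqref{eq6.11}: the algebra exploits the four-fold symmetry $\lambda_2=-\lambda_1$, $\lambda_3=\overline{\lambda}_1$, $\lambda_4=-\overline{\lambda}_1$ together with the $\Pi^+$-pairing $\bx_3=\Pi\overline{\bx}_2$, $\bx_4=\Pi\overline{\bx}_1$ from \eqref{eq6.2}, so that invertibility of the full $4\times 4$ matrix $\Phi$ reduces to the two stated scalar conditions $a_1a_2\ne 0$ and $\det\widehat{\Phi}\ne 0$. Isolating the correct $2\times 2$ Schur complement $\widehat{\Phi}$ is the nontrivial piece; the polynomial-tail estimate itself is comparatively routine once this block decomposition is in hand.
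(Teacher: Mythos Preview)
Your proposal is correct and follows essentially the same route as the paper's own proof: write a unit $\bu=X_1\bw\in\mathcal{X}_1$, bound $\|\bw\|_2\le\|(X_1^HX_1)^{-1/2}\|_2$, choose the polynomial coefficients $\bz=\Phi^{-1}\bw$ so that the $\mathcal{X}_1$-part of $\bg$ matches $\bu$, and estimate the tail over $j\ge 5$. The only cosmetic difference is that the paper routes the tail estimate through $\|\bz\|_1$ whereas you invoke Cauchy--Schwarz in $\mathbb{C}^2$; both lead to the same bound (and both share the same slight looseness in extracting the factor $\sqrt{2}$ rather than $2$ from a four-component $\bz$).
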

\begin{proof}
For any given $\bx\in\mathcal{X}_1$ with $\|\bx\|_2=1$, there exists a vector $\bb\in\mathbb{C}^4$ such that $\bx=X_1\bb$.
Then we have
\begin{equation*}
\|\bx\|_2^2=\|X_1\bb\|_2^2=\bb^HX_1^HX_1\bb=\|(X_1^H X_1)^{\frac{1}{2}}\bb\|_2^2=1,
\end{equation*}
leading to
\begin{equation} \label{eq6.14}
\|\bb\|_2=\|(X_1^H X_1)^{-\frac{1}{2}}(X_1^H X_1)^{\frac{1}{2}}\bb\|_2 \leq \|(X_1^H X_1)^{-\frac{1}{2}}\|_2\|(X_1^H X_1)^{\frac{1}{2}}\bb\|_2=\|(X_1^H X_1)^{-\frac{1}{2}}\|_2.
\end{equation}
Since $a_1a_2\neq 0$ and $\widehat{\Phi}$ is nonsingular, we can take the coefficients of
$p_k$ and $\widehat{p}_k$ in \eqref{eq6.5} to be
\begin{equation} \label{eq6.15}
\bz= \begin{bmatrix}
	\zeta_1&\zeta_2&\widehat{\zeta}_1&\widehat{\zeta}_2
\end{bmatrix}^T =\Phi^{-1}\bb.
\end{equation}
Let $\bg(\bx)=p_k(\scrH)\bq_1+\widehat{p}_k(\scrH)\Pi\overline{\bq}_1\in\mathcal{K}_{2k+2}(\scrH,\bq_1)$.
Based on the above analysis, we get
\begin{align*}
&\|\bx-\bg(\bx)\|_2=\|\bx-(p_k(\scrH)\bq_1+\widehat{p}_k(\scrH)\Pi\overline{\bq}_1)\|_2\\
=&\left\|
X_1\bb-X_1\Phi\bz-\sum_{j=5}^{2n} a_j p_k(\l_j)\bx_j 
- \sum_{j=5}^{2n}\overline{a}_j \widehat{p}_k(-\overline{\l}_j)\Pi\overline{\bx}_j
\right\|_2\\
=&\left\| \sum_{j=5}^{2n} a_j p_k(\l_j)\bx_j +
\sum_{j=5}^{2n}\overline{a}_j \widehat{p}_k(-\overline{\l}_j)\Pi\overline{\bx}_j\right\|_2\\
\leq& \sum_{j=5}^{2n}|a_j|
\left[
\left\|\left(\zeta_1\left(\frac{\l_j}{\l_1}\right)^k
 +\zeta_2\left(\frac{\l_j}{\l_1}\right)^{k-1} \right)\bx_j\right\|_2
 +\left\|\left(\widehat{\zeta}_1\left(\frac{\overline{\l}_j}{\l_1}\right)^k 
 +\widehat{\zeta}_2\left(\frac{\overline{\l_j}}{\l_1}\right)^{k-1} \right)
\Pi\overline{\bx}_j\right\|_2
\right]\\
\leq &\sum_{j=5}^{2n}|a_j|
\left(
|\zeta_1|\left|\frac{\l_j}{\l_1}\right|^k
 +|\zeta_2|\left|\frac{\l_j}{\l_1}\right|^{k-1}
+|\widehat{\zeta}_1|\left|\frac{\l_j}{\l_1}\right|^k 
+|\widehat{\zeta}_2|\left|\frac{\l_j}{\l_1}\right|^{k-1}
\right)\\
\leq& \sum_{j=5}^{2n}|a_j|\left|\frac{\l_j}{\l_1}\right|^{k-1}\|\bz\|_1
\leq \sqrt{2}\sum_{j=5}^{2n}|a_j|\left|\frac{\l_j}{\l_1}\right|^{k-1}\|\bz\|_2 
\leq \sqrt{2}\|\Phi^{-1}\|_2\sum_{j=5}^{2n}|a_j|\left|\frac{\l_j}{\l_1}\right|^{k-1}\|\bb\|_2\\
\leq &\sqrt{2}\|\Phi^{-1}\|_2\|(X_1^HX_1)^{-\frac{1}{2}}\|_2\sum_{j=5}^{2n}|a_j|\left|\frac{\l_j}{\l_1}\right|^{k-1}.
\end{align*}
By Definition \ref{def6}, we obtain
\begin{align*}
\text{dist}(\mathcal{X}_1,\mathcal{K}_{2k+2}(\scrH,\bq_1))\leq \max_{
	\substack{
     \sss \bx\in\mathcal{X}_1   \\
       \sss \|\bx\|_2=1
}}\|\bx-\bg(\bx)\|_2\leq \sqrt{2}\|\Phi^{-1}\|_2\|(X_1^HX_1)^{-\frac{1}{2}}\|_2\sum_{j=5}^{2n}|a_j|\left|\frac{\l_j}{\l_1}\right|^{k-1},
\end{align*}
the result to be proved.
\end{proof}

Provided that all eigenvalues of $\scrH$ are semi-simple,
Theorem~\ref{thm6.1} illustrates  the distance between
the $\bPi$-Krylov subspace  $\mathcal{K}_{2k+2}(\scrH,\bq_1)$ and
the invariant subspace $\mathcal{X}_1$ associated with the eigenvalues
$\{\lambda_1, -\lambda, \overline{\lambda}_1, -\overline{\lambda}_1\}$,
whose moduli are the greatest.
The inequality \eqref{eq6.13}
reveals that such distance $\text{dist}(\mathcal{X}_1,\mathcal{K}_{2k+2}(\scrH,\bq_1))$ relies  upon  the gap
between $|\lambda_1|$ and absolute values of the rest eigenvalues,
bearing a strong resemblance to the power method (please refer to
\cite{govl:96}).

\section{Numerical Results}\label{sec:egs}

In this section, we will  solve two Bethe-Salpeter eigenvalue problems to 
test the efficiency of our \GQR\ algorithm and \GLan~algorithm. 
Let the corresponding matrix be $\scrH$.   
To show the relative accuracy of those computed eigenvalues by both methods, 
we use the  shift-and-invert  technique  to refine those approximate eigenvalues, 
and then use them as the exact eigenvalues of $\scrH$. Specifically,
let  $\lambda_j$, $j=1,2,\ldots$  be  the  $j$-th ``exact'' eigenvalue of $\scrH$
and let $(\mu_j, \pmb{z}_j)$, $j=1,2,\ldots$ be the corresponding approximate eigenpairs. 
The relative errors $e(\mu_j)$ of $\mu_j$ is
\begin{equation*}
e(\mu_j)=\frac{|\mu_j-\lambda_j|}{|\lambda_j|}, \qquad j=1,2\ldots.
\end{equation*}
Besides, in the following numerical results  the norm of the residual
$r(\mu_j, \pmb{z}_j)$  of   $(\mu_j, \pmb{z}_j)$  will be given, i.e.,
\begin{align*}
r(\mu_j, \pmb{z}_j)=\frac{\|\scrH\pmb{z}_j-\mu_j\pmb{z}_j\|_1}{(\|\scrH\|_1+|\mu_j|)\|\pmb{z}_j\|_1}, \quad j=1,2\ldots.
\end{align*}

For the sake of fairness we code all programs. All numerical computations 
are carried out by running  MATLAB Version 2016b,
on a Lenovo Pro with a 2.60GHz Intel Core i5-3230M CPU and 4GB RAM, with
machine epsilon  $eps=2.22\times 10^{-16}$.

\begin{example}\label{eg:1}
	
In this example, $A$ and $B$, respectively,  
come from \emph{3Dspectralwave2} and \emph{dielFilterV3clx}:

 \emph{$\text{https://www.cise.ufl.edu/research/sparse/matrices/list\_by\_type.html}$}. 

\noindent We select the last $500$ rows and columns of \emph{3Dspectralwave2} and  
 \emph{dielFilterV3clx}, respectively,  to construct $A$ 
 with $A^H=A\in \mathbb{C}^{500\times 500}$ and $B$ with $B^T=B\in \mathbb{C}^{500\times 500}$, 
 leading to $\scrH\in\mathbb{C}^{1000\times1000}$. Here both A and B are dense matrices. 
 We use our \GQR\ agorithm and the classical  QR method to compute all eigenvalues 
 of $\scrH$.
 
When applying the \GQR\ algorithm to compute approximate eigenpairs $(\mu_j,\bz_j)$,  
two  steps are needed:
\begin{enumerate}
	\item[(S\romannumeral1)] Pursuing the inverse iteration to the 
		{\em $\bPi^{-}$-Hermitian-tridiagonal} matrix $\mathscr{G}$, 
		where $\mathscr{G}$ is obtained by performing a serious 
		$\Gamma$-unitaty transformations 
			to the initial $\scrH$, with $\mu_j^{(0)}$ 
			(the computed eigenvalues of   $\mathscr{G}$) being the 
			shifts to acquire  approximate eigenpais $(\widetilde \mu_j, \widetilde \bu_j)$ 
			of $\mathscr{G}$. Note that with  $(\widetilde \mu_j, \widetilde \bu_j)$ and all  
			$\Gamma$-orthogonal transformations we can obtain the corresponding approximate 
			eigenpais $(\widetilde \mu_j, \widetilde \bz_j)$. 
	\item[(S\romannumeral2)] For $(\widetilde \mu_j, \widetilde \bz_j)$ we  perform  
		inverse iterations on $\scrH$ to get the refined eigenpairs $(\mu_j, \bz_j)$. 
\end{enumerate}
It is worthwhile to point that although we can obtain the approximate eigenpais 
$(\mu_j^{(0)}, \bu_j^{(0)})$ by accumulating all  $\Gamma$-orthogonal transformations, 
such  approximations are fairly poor since $\Gamma$-orthogonal transformations lose 
orthogonality. So we need employ  inverse iterations as stated in 
step (S\romannumeral1), which is very cheap due to the 
special structure of $\mathscr{G}$, 
to refine $(\mu_j^{(0)}, \bu_j^{(0)})$. Besides, step (S\romannumeral2) is 
more expensive then the refinement  in  step (S\romannumeral1)  and one may apply it 
when necessary.

Tables~\ref{tbl:flops} and \ref{tbl:times} respectively   display  
the  flops and the executing times   taken by the \GQR\ algorithm and  the QR method, 
which show that our \GQR\ algorithm takes much less  computation cost 
than the classical QR method. Such  superiority of the  \GQR\ algorithm dues to the 
structure-preserving property, that is, all matrices produced by the  
implicit multishift \GQR\ iteration keep the {\em $\Pi^-$-Hermitian-tridiagonal} structure.

\begin{table}[h!]
\caption{The flop counts of the \GQR\ algorithm and the QR method}
\renewcommand{\arraystretch}{1.1}
\begin{center}
\begin{tabular}{|c|c|p{.6\textwidth}|c|}
\hline
Method & \multicolumn{2}{|c|}{Phase} & Flop \\ \hline
\multirow{2}{*}{\GQR} & i & $\Pi^-$-Hermitian-tridiagonalization on $\scrH$& $48 n^3$ \\ \cline{2-4}
                      & ii & One step of implicit multishift \GQR\ iteration& $720 n$ \\ \hline
\multirow{2}{*}{QR} & i & Hessenberg  reduction by Householder transformations on $\scrH$& $160n^3$ \\ \cline{2-4}
                      & ii & One step of implicit double-shift QR iteration & $480 n^2 $ \\ \hline
\end{tabular}%
\end{center}
\label{tbl:flops}
\end{table}

\begin{table}[h!]
\caption{The executing time of the \GQR\ algorithm and the QR method}
\renewcommand{\arraystretch}{1.1}
\begin{center}
\begin{tabular}{|c|c|p{.6\textwidth}|c|}
\hline
Method & \multicolumn{2}{|c|}{Phase} & Time (secs.) \\ \hline
\multirow{2}{*}{\GQR} & i & $\Pi^-$-Hermitian-tridiagonalization on $\scrH$& $51.8$ \\ \cline{2-4}
                      & ii & Implicit multishift \GQR\ iterations& $4730.4 $ \\ \hline
\multirow{2}{*}{QR} & i & Hessenberg reduction by Householder transformations on $\scrH$& $165.1$ \\ \cline{2-4}
                      & ii & Implicit double-shift QR iterations & $18095.2 $ \\ \hline
\end{tabular}%
\end{center}
\label{tbl:times}
\end{table}

The left figure in Figure~\ref{fig:eg1} plots  the normalized residual norms $r(\wtd\mu_j)$
for $(\tilde{\mu}_j,\wtd Q\tilde{\bz}_j)$ and $r(\mu_j)$ for $(\mu_j,\bz_j)$, 
which respectively are obtained in step (S\romannumeral1) and step (S\romannumeral2) in 
\GQR\ algorithm. While the right one gives  the normalized residual norms $r(\mu_j)$ 
corresponding to the QR method.  The $x$-axis represents the moduli
of  all approximate eigenvalues 
$\mu_j$ satisfying $\Re(\mu_j)\geq 0$ and $\Im(\mu_j)\geq 0$,   
which include  $128$ real, $136$ purely imaginary, and $118$ complex eigenvalues. 
It shows in Figure~\ref{fig:eg1} that   the normalized residual norms 
from step (S\romannumeral1) is decent  and the corresponding eigenpairs could be good 
enough for many applications. If refinement is necessary, then the inverse iterations 
in step  (S\romannumeral2) is  efficient. 

\begin{figure}
{\centering
\begin{tabular}{ccc}
\hspace{-0.3 cm}
\resizebox*{0.47\textwidth}{0.30\textheight}{\includegraphics{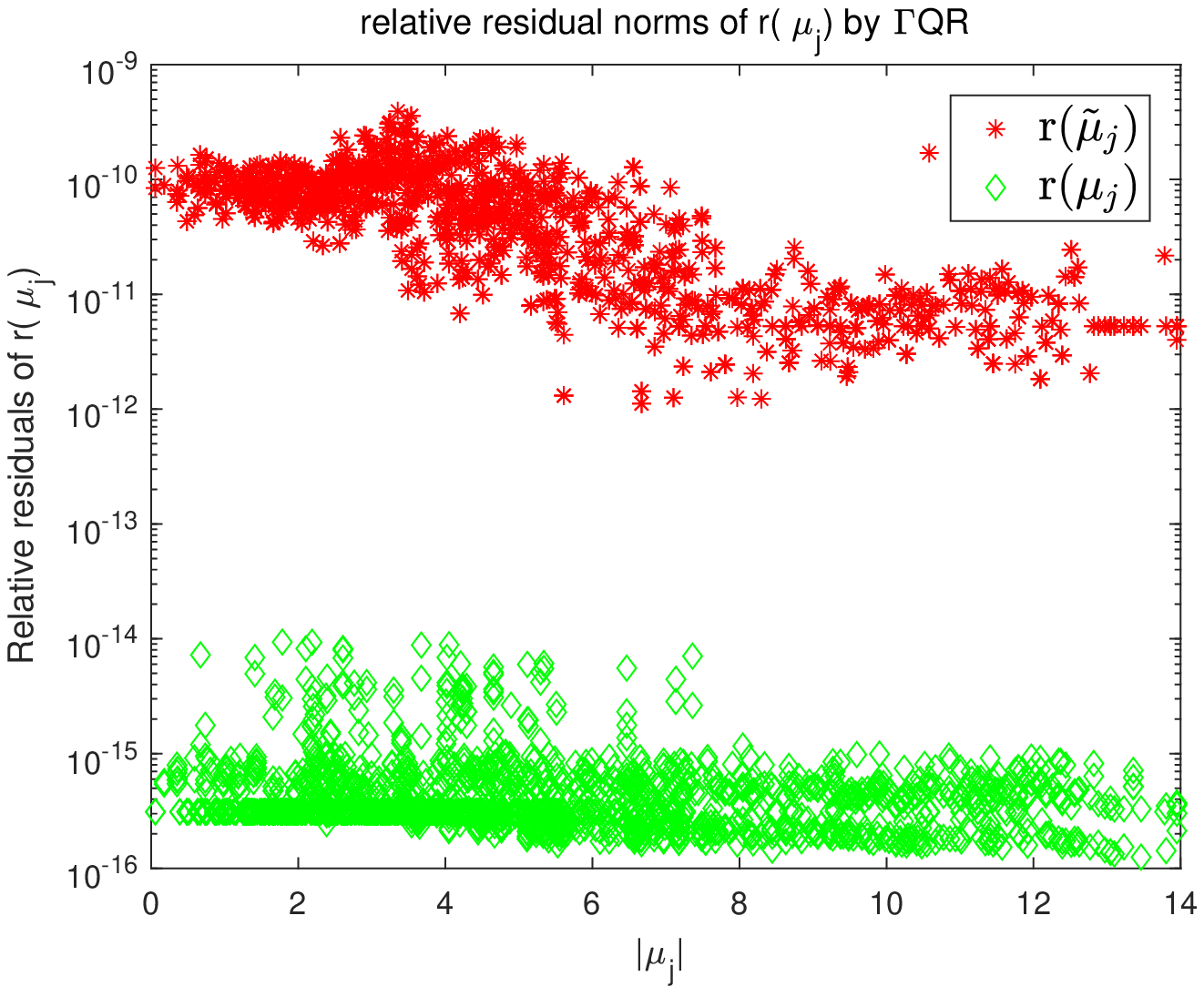}}
& & \hspace{-0.2 cm}
\resizebox*{0.47\textwidth}{0.30\textheight}{\includegraphics{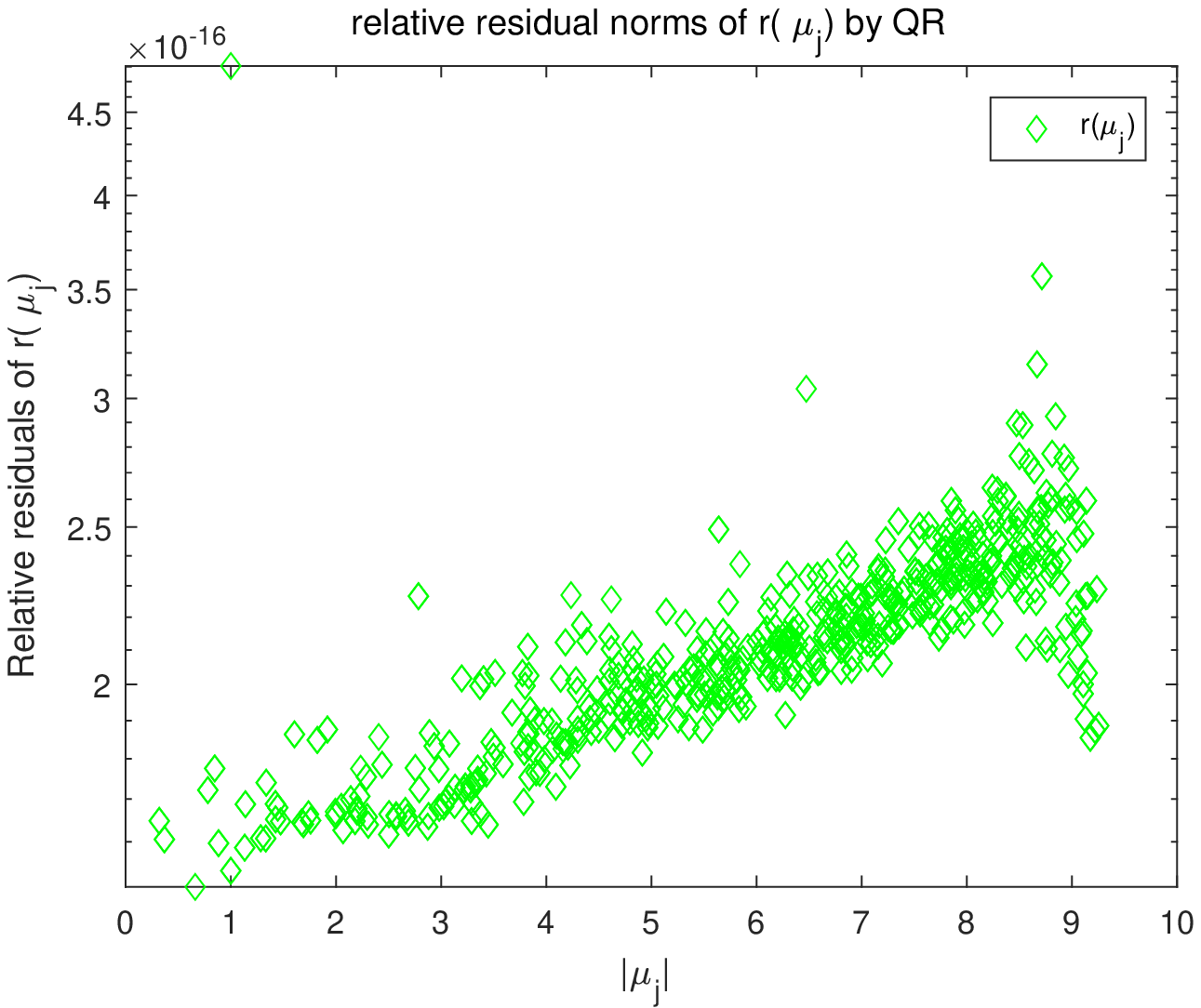}}
\end{tabular}\par
}\vspace{-0.25 cm}
\caption{\small Normalize residual norms for approximate eigenpairs by \GQR\ (left)
   and by QR (right)}
   \label{fig:eg1}
\end{figure}

\end{example}

\begin{example}\label{eg:2}
	
In this example, we borrow the first $10000$ rows and columns of 
\emph{3Dspectralwave2}, which is Hermitian,
to construct $A$,  and the first $10000$  of  \emph{dielFilterV3clx},
which is complex symmetric, to form $B$, 
leading to $\scrH\in\mathbb{C}^{20000\times 20000}$. Here, 
both A and B are sparse matrices. We apply  the  \GLan\ algorithm  and the classical Arnoldi method to compute the largest two eigenvalues of $\scrH^{-1}$ in absolute value, or equivalently, the smallest two eigenvalues of $\scrH$ in absolute value. 
In practice, we solve a linear system to calculate $\scrH^{-1}q$.

Table~\ref{Table_1.1} gives the executing  time for both methods, showing that 
the \GLan~decomposition takes much less time than the Arnoldi decomposition. 
Table~\ref{tab:eigenvalue}  gives the eigenvalues computed by 
our \GLan~algorithm and the Arnoldi method,
where those obtained by the \GLan~algorithm preserve the special structure of the eigenvalues of
the initial matrix $\scrH$, that is, complex eigenvalues appear in quadruples. Besides,
 the Arnoldi method needs to compute all eight eigenvalues, 
 while our \GLan~algorithm just needs compute two eigenvalues for it.

\begin{table}[h!]
\caption{The executing time by the \GLan\ algorithm  and the  Arnoldi method }
\renewcommand{\arraystretch}{1.1}
\begin{center}
\begin{tabular}{|c|c|p{.6\textwidth}|c|}
\hline
Method & \multicolumn{2}{|c|}{Phase} & Time (secs.) \\ \hline
\multirow{2}{*}{\GLan} & i & \GLan ~decomposition of $\scrH$ & 899.9 \\ \cline{2-4}
                      & ii & Implicit \GQR\ algorithm & 0.9648 \\ \hline
\multirow{2}{*}{Arnoldi} & i &  Arnoldi decomposition of $\scrH$  & 1484.8 \\ \cline{2-4}
                      & ii &  QR method   & 1.6170 \\ \hline
\end{tabular}%
\end{center}
\centering%
\label{Table_1.1}
\end{table}

\begin{table}[H]
\footnotesize 
\caption{The minimum eigenvalues in absolute value}\label{tab:eigenvalue}
\begin{tabular}{c||c||c|c|c|c}
\hline
& &\multicolumn{4}{c}{The smallest two eigenvalues  in moduli }
\\
\cline{3-6}
\multirow{2}{*}{\GLan}
&$\mu_1$&0.0064+0.0117i &0.0064-0.0117i &-0.0064+0.0117i &-0.0064-0.0117i  \\
&$\mu_2$&0.0065+0.0202i &0.0065-0.0202i &-0.0065+0.0202i &-0.0065-0.0202i  \\
\cline{3-6}
\hline
\multirow{2}{*}{Arnoldi}
&$\mu_1$&-0.0064-0.0117i & 0.0063-0.0118i & -0.0063+0.0107i & 0.0064+0.0207i \\
&$\mu_2$&0.0066+0.0117i  & -0.0068+0.0203i & 0.0065-0.0203i & -0.0068-0.0026i\\
\hline
\end{tabular}
\end{table}


Figures~\ref{fig:relative-error} and \ref{fig:residual} display the numerical
results of $e(\mu_j)$ and $r(\mu_j, \pmb{z}_j)$, respectively, for $j=1,2$,
where the $x$-axis denotes the steps of iteration (writing as $k$), and the $y$-axes, respectively, denote the
values of  $e(\mu_j)$ and $r(\mu_j, \pmb{z}_j)$. Clearly, $e(\mu_j)$ and $r(\mu_j, \pmb{z}_j)$ from the
\GLan~algorithm decrease much faster than those from the Arnoldi method,
even $r(\mu_j, \pmb{z}_j)$ acquired by our algorithm is square smaller than that by the Arnoldi method.
Besides, it is worthwhile to point that when using $r(\mu_j, \pmb{z}_j)$ as the stop criterion and setting the
tolerance  as $r(\mu_j, \pmb{z}_j)\leq 10^{-12}$, our \GLan\  algorithm and the Arnoldi method, respectively,
require $15$ and $30$ iterations for $\mu_1$, and $17$ and $42$ for $\mu_2$.

\begin{figure}[H]
\centering
\includegraphics[height=10cm,width=12cm]{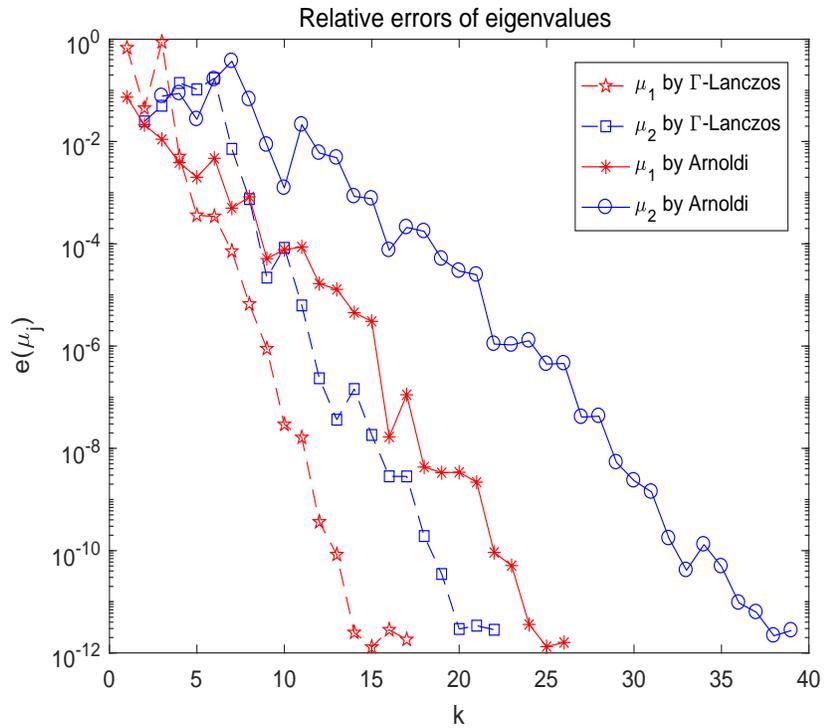}
\caption{Relative error of eigenvalues for \GLan\ and Arnoldi} 
\label{fig:relative-error}
\end{figure}

\begin{figure}[h]
\centering
\includegraphics[height=10cm,width=12cm]{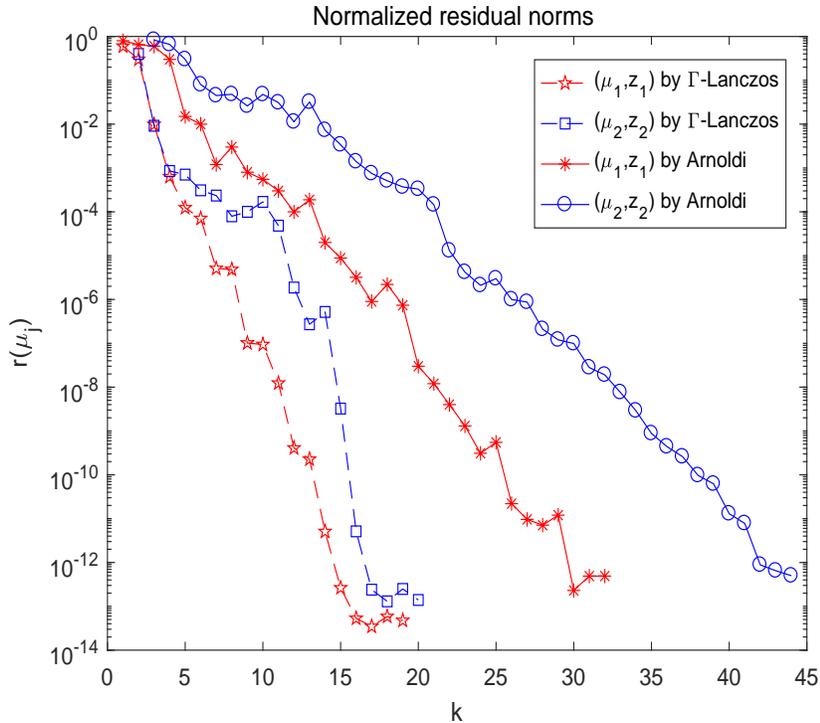}
\caption{Normalized residual norms for \GLan\ and Arnoldi} 
\label{fig:residual}
\end{figure}

\end{example}

\section{Conclusions}\label{sec:concl}

Based on the  \GQR\  algorithm in \cite{lllin:17}   
for solving the linear response eigenvalue problem,
an efficient implicit multishift \GQR\  algorithm is extended to 
solve the Bethe-Salpeter eigenvalue problem
with modest  size, which  preserves the {\em $\Pi^-$-Hermitian}  structure of the initial $\scrH$.
Considering the large-scale Bethe-Salpeter eigenvalue problem, a \GLan\   algorithm is developed,
where the adopted  special projection gives arise a small size   
matrix of the {\em $\Pi^-$-Hermitian} type,
which actually is similar to the Rayleigh Quotient in the  classical Lanczos method.
By computing the eigenvalues of the resulted  small size   matrix,
good approximations of the  eigenpairs of $\scrH$ can  be obtained.
Essentially, the key of both proposed   algorithms is to construct some
$\mathrm{\Gamma}$-unitary transformations which  preserve 
the special structure of the eigenpairs of $\scrH$,
 guaranteeing   the computed eigenpairs appear pairwise as
$\{(\lambda,\bx),(-\overline{\lambda}, \Pi \overline{\bx})\}$. 
Numerical experiments  show that our \GQR~algorithm and  \GLan~algorithm 
respectively take much less executing time  than the QR method and the Arnoldi method,  
to achieve the same relative accuracy of the approximate eigenvalue and
also the same  relative error of the approximate eigenpair.

\section*{Acknowledgment}

T. Li was supported in part by the NSFC grants 11471074.
We would like to thank the National Center of Theoretical Sciences,  and the ST Yau Centre at the National Chiao
Tung University for their support. Also we would like
to express our sincere gratitude to the editor and the
referees for their careful reading and their
valuable comments.

\bibliographystyle{elsarticle-num}
\bibliography{lillRPA}

\begin{thebibliography}{10}
\expandafter\ifx\csname url\endcsname\relax
  \def\url#1{\texttt{#1}}\fi
\expandafter\ifx\csname urlprefix\endcsname\relax\def\urlprefix{URL }\fi
\expandafter\ifx\csname href\endcsname\relax
  \def\href#1#2{#2} \def\path#1{#1}\fi

\bibitem{dresselhausDSJ2007exciton}
M.~S. Dresselhaus, G.~Dresselhaus, R.~Saito, A.~Jorio, Exciton photophysics of
  carbon nanotubes, Annual Review of Physical Chemistry 58 (2007) 719--747.

\bibitem{barbieriR1978solving}
R.~Barbieri, E.~Remiddi, Solving the {B}ethe-{S}alpeter equation for
  positronium, Nuclear Physics B 141~(4) (1978) 413--422.

\bibitem{onidaRR2002electronic}
G.~Onida, L.~Reining, A.~Rubio, Electronic excitations: density-functional
  versus many-body green's-function approaches, Rev. Mod. Phys. 74 (2002)
  601--659.

\bibitem{casida2009timedependent}
M.~E. Casida, Time-dependent density-functional theory for molecules and
  molecular solids, J. Molecular Struct.: {THEOCHEM} 914 (2009) 3--18.

\bibitem{parrishHSSM2013exact}
R.~M. Parrish, E.~G. Hohenstein, N.~Schunck, C.~Sherrill, T.~J. Martinez, Exact
  tensor hypercontraction: a universal technique for the resolution of matrix
  elements of local finite-range {N-body} potentials in many-body quantum
  problems, Phys. Rev. Lett. 111~(13) (2013) 132505.

\bibitem{reboliniTS2013electronic}
E.~Rebolini, J.~Toulouse, A.~Savin, Electronic excitation energies of molecular
  systems from the {Bethe-Salpeter} equation: Example of the {H2} molecule,
  arXiv e-prints~(arXiv:1304.1314).

\bibitem{sacs:10}
Y.~Saad, J.~R. Chelikowsky, S.~M. Shontz, Numerical methods for electronic
  structure calculations of materials, SIAM Rev. 52 (2010) 3--54.

\bibitem{rolg:10}
D.~Rocca, D.~Lu, G.~Galli, {\em Ab initio} calculations of optical absorpation
  spectra: solution of the {B}ethe-{S}alpeter equation within density matrix
  perturbation theory, J. Chem. Phys. 133~(16) (2010) 164109.

\bibitem{casi:95}
M.~E. Casida, Time-dependent density-functional response theory for molecules,
  in: D.~P. Chong (Ed.), Recent advances in {Density Functional Methods}, World
  Scientific, Singapore, 1995, pp. 155--189.

\bibitem{yang:16}
M.~Shao, F.~H.~D. Jornada, C.~Yang, J.~Deslippe, S.~G. Louie, Structure
  preserving parallel algorithms for solving the {Bethe-Salpeter} eigenvalue
  problem, Linear Algebra Appl. 488 (2016) 148--167.

\bibitem{peter:15}
P.~Benner, V.~Khoromskaia, B.~N. Khoromskij, A reduced basis approach for
  calculation of the {Bethe-Salpeter} excitation energies using low-rank tensor
  factorizations, Mol. Phys. 114~(7-8) (2016) 1148--1161.

\bibitem{peter:17}
P.~Benner, S.~Dolgov, V.~Khoromskaia, B.~N. Khoromskij, Fast iterative solution
  of the {Bethe-Salpeter} eigenvalue problem using low-rank and {QTT} tensor
  approximation, J. Comput. Phys. 334 (2017) 221--239.

\bibitem{bali:12a}
Z.~Bai, R.-C. Li, Minimization principle for linear response eigenvalue
  problem, {I}: Theory, SIAM J. Matrix Anal. Appl. 33~(4) (2012) 1075--1100.

\bibitem{bali:13}
Z.~Bai, R.-C. Li, Minimization principles for the linear response eigenvalue
  problem {II}: Computation, SIAM J. Matrix Anal. Appl. 34~(2) (2013) 392--416.

\bibitem{teli:13}
Z.~Teng, R.-C. Li, Convergence analysis of {Lanczos}-type methods for the
  linear response eigenvalue problem, J. Comput. Appl. Math. 247 (2013) 17--33.

\bibitem{lllin:17}
T.~Li, R.-C. Li, W.-W. Lin, A symmetric structure-preserving {$\Gamma$QR}
  algorithm for linear response eigenvalue problems, Linear Algebra Appl. 520
  (2017) 191--214.

\bibitem{demm:97}
J.~Demmel, Applied Numerical Linear Algebra, SIAM, Philadelphia, PA, 1997.

\bibitem{govl:96}
G.~H. Golub, C.~F. {Van Loan}, Matrix Computations, 3rd Edition, Johns Hopkins
  University Press, Baltimore, Maryland, 1996.

\bibitem{saad2011numerical}
Y.~Saad, Numerical methods for large eigenvalue problems, revised version
  Edition, no.~66 in Classics in Applied Mathematics, SIAM, 2011.

\bibitem{kaniel1966estimates}
S.~Kaniel, Estimates for some computational techniques in linear algebra, Math.
  Comp. 20~(95) (1966) 369--378.

\bibitem{paige1971computation:thesis}
C.~C. Paige, The computation of eigenvalues and eigenvectors of very large
  sparse matirces, Ph.D. thesis, London University, London, England (1971).

\bibitem{saad1980rates}
Y.~Saad, On the rates of convergence of the {L}anczos and the block-{L}anczos
  method, SIAM J. Numer. Anal. 15~(5) (1980) 687--706.

\bibitem{swarztrauber1974direct}
P.~N. Swarztrauber, A direct mehtod for the discrete solution of separable
  elliptic equations, SIAM J. Nomer. Anal. 11 (1974) 1136--1150.

\bibitem{smith1978numerical}
G.~D. Smith, Numerical Solution of Partial Differential Equations, 2nd Edition,
  Clarendon Press, Oxford, 1978.

\bibitem{sweet1973direct}
R.~Sweet, Direct methods for the solution of poisson's equation on a staggered
  grid, J. Comput. Phys. 12 (1973) 422--428.

\bibitem{swarztrauber1977methods}
P.~N. Swarztrauber, The methods of cyclic reduction, {Fourier} analysis and
  cyclic reduction {F}ourier analysis for the discrete solution of poisson's
  equation on a rectangle, SIAM Rev. 19 (1977) 490--501.

\bibitem{luatiP2010spectral}
A.~Luati, T.~Proietti, On the spectral properties of matrices associated with
  trend filters, Econometric Theory 26 (2010) 1247--1261.

\bibitem{respondek2010numerical}
J.~S. Respondek, Numerical simulation in the partial differential equation
  controllability analysis with physically meaningful constraints, Math.
  Comput. Simulation 81~(1) (2010) 120--132.

\bibitem{buns:81}
A.~Bunse-Gerstner, An analysis of the {HR} algorithm for computing the
  eigenvalues of a matrix, Linear Algebra Appl. 35 (1981) 155--173.

\bibitem{fllw:92}
U.~Flaschka, W.-W. Lin, J.-L. Wu, A {KQZ} algorithm for solving linear-response
  eigenvalue equations, Linear Algebra Appl. 165 (1992) 93--123.

\bibitem{alpp:88}
S.~T. Alexander, C.-T. Pan, R.~J. Plemmons, Analysis of a recursive least
  squares hyperbolic rotation algorithm for signal processing, Linear Algebra
  Appl. 98 (1988) 3--40.

\bibitem{stsu:90}
G.~W. Stewart, J.-G. Sun, Matrix Perturbation Theory, Academic Press, Boston,
  1990.

\end{thebibliography}

\end{document}